\newcommand{\F}{\mathbb{F}}
\newcommand{\R}{\mathbb{R}}
\newcommand{\Pb}{\eqref{eq:prob}\xspace}
\newcommand{\bdp}{\eqref{bdp}\xspace}
\newcommand{\Pbh}{\eqref{eq:probh}\xspace}
\newcommand{\Pbdh}{\eqref{eq:probdh}\xspace}
\newcommand{\bdph}{\eqref{bdph}\xspace}
\newcommand{\uad}{\mathcal{U}_{\textup{ad}}}
\newcommand{\uadh}{\mathcal{U}_{\textup{ad},h}}
\newcommand{\uh}{\mathcal{U}_h}
\newcommand{\yh}{Y_h}
\newcommand{\dx}{\, \textup{d}x}
\newcommand{\deta}{\, \textup{d}\eta}
\newcommand{\BB}{\mathcal{B}}
\renewcommand{\AA}{\mathcal{A}}
\newcommand\hatbar[1]{\hat{\bar{#1}}}
\newcommand{\abs}[1]{\lvert #1 \rvert}
\newcommand{\bigabs}[1]{\bigl\lvert #1 \bigr\rvert}
\newcommand{\Bigabs}[1]{\Bigl\lvert #1 \Bigr\rvert}
\newcommand{\norm}[1]{\lVert #1 \rVert}
\newcommand{\bignorm}[1]{\bigl\lVert #1 \bigr\rVert}
\newcommand{\weakly}{\rightharpoonup}
\newcommand{\weaklystar}{\stackrel{*}{\weakly}}
\newcommand\ad[1]{a(\cdot,#1)}
\newcommand\apd[1]{a'(\cdot,#1)}
\newcommand\appd[1]{a''(\cdot,#1)}
\newcommand\bd[1]{b(\cdot,#1)}
\newcommand\bpd[1]{b'(\cdot,#1)}
\newcommand\bppd[1]{b''(\cdot,#1)}
\newcommand\bary{y_{\bar u}}
\newcommand\yu{y_u}
\newcommand\zuv{z_{u,v}}
\newcommand\zbaruv{z_{\bar u,v}}
\newcommand\barwv{\bar w_v}
\newcommand\wv{w_v}
\newcommand{\mscLink}[1]{\href{http://www.ams.org/mathscinet/msc/msc2010.html?t=#1}{#1}}
\def\mathllap{\mathpalette\mathllapinternal}
\def\mathrlap{\mathpalette\mathrlapinternal}
\def\mathllapinternal#1#2{\llap{$\mathsurround=0pt#1{#2}$}}
\def\mathrlapinternal#1#2{\rlap{$\mathsurround=0pt#1{#2}$}}
\newcommand\dual[2]{\langle #1, #2 \rangle}
\newcommand\todo[1]{\textbf{\textcolor{red}{#1}}}
\newtheorem{remark}[theorem]{Remark}
\title{Second-Order Analysis and Numerical Approximation for Bang-Bang Bilinear Control Problems\thanks{The first author was partially supported by the Spanish Ministerio de Econom\'{\i}a y Competitividad under project MTM2014-57531-P. The second author was partially supported by DFG under grant number Wa 3626/1-1.}}
\author{Eduardo Casas\thanks{Departamento de Matem\'{a}tica Aplicada y Ciencias de la Computaci\'{o}n, E.T.S.I. Industriales y de Telecomunicaci\'on, Universidad de Cantabria, 39005 Santander, Spain, {\tt eduardo.casas@unican.es}.}
\and Daniel Wachsmuth\thanks{Institut f\"ur Mathematik, Universit\"at W\"urzburg, 97074 W\"urzburg, Germany, {\tt  daniel.wachsmuth@mathematik.uni-wuerzburg.de}.}
\and Gerd Wachsmuth\thanks{Technische Universit\"at Chemnitz, Faculty of Mathematics, 09107 Chemnitz, Germany, {\tt gerd.wachsmuth@mathematik.tu-chemnitz.de}.}}
\begin{document}
\maketitle
\begin{abstract}
We consider bilinear optimal control problems,
whose objective functionals do not depend on the controls.
Hence, bang-bang solutions will appear.
We investigate sufficient second-order conditions for bang-bang controls, which guarantee
local quadratic growth of the objective functional in $L^1$.
In addition, we prove that for controls that are not bang-bang, no such
growth can be expected.
Finally, we study the finite-element discretization, and
prove error estimates of bang-bang controls in $L^1$-norms.
\end{abstract}

\begin{keywords}
	bang-bang control,
	bilinear controls,
	second-order conditions,
	sufficient optimality conditions,
	error analysis
\end{keywords}

\begin{AMS}
	\mscLink{49K20},
	\mscLink{49K30},
	\mscLink{35J61}
\end{AMS}

\section{Introduction}

In this article, we consider optimal control problems of the following type:
Minimize the cost functional
\begin{equation}\label{eq001}
 J(y,u):=\frac12 \|y-y_d\|_{L^2(\Omega)}^2
\end{equation}
subject to the elliptic equation
\begin{equation}\label{eq002}
L y + b(y) + \chi_\omega u y = f
\end{equation}
and control constraints
\begin{equation}\label{eq003}
 \alpha \le u \le \beta.
\end{equation}
Here, $\Omega\subset \R^n$ is a bounded domain with Lipschitz boundary,
$L$ is a second-order elliptic operator, and $b$ is a monotone nonlinearity.
The presence of the nonlinear coupling $\chi_\omega u y$ motivates to call this problem `bilinear',
sometimes the term `control affine problem' is used.
In addition, this coupling complicates the analysis considerably.
Since $J$ does not depend explicitly on the control, it is expected that
locally optimal controls $\bar u$ are of bang-bang type, that is $\bar u(x)\in \{\alpha,\beta\}$
for almost all $x \in \Omega$.

We are interested in sufficient second-order optimality conditions and discretization error estimates for problem \eqref{eq001}--\eqref{eq003}.
To this end, we develop an abstract framework in \cref{S2}.
The analysis relies on a structural assumption on the behavior of the reduced gradient
on almost inactive sets.
This allows to prove a second-order condition, see \cref{T2.2}.
The abstract results are then applied in \cref{S3} to the bilinear control problem
of elliptic equations.

In addition, we investigate the discretization of the original problems
using finite elements. Here, we show that under the sufficient second-order
condition we obtain an error estimate of the type
\[
 \| \bar u - \bar u_h \|_{L^1} \le  \ c\, h,
\]
see \cref{T4.2}. This extends earlier result for linear-quadratic bang-bang problems
\cite{Deckelnick-Hinze2010,Wachsmuth2015} and
regularized nonlinear control problems \cite{ACT02,Casas02}.

Let us comment on the existing literature for bang-bang control problems.
The present paper continues our research on bang-bang problems. It extends
earlier works \cite{Casas2012,CWW2017}, which focused on problems with the control
appearing linearly, to the bilinear case.
In the literature on control problems governed by ordinary differential equations
there are many contributions dealing with second-order conditions
in the bang-bang case, e.g.,
\cite{Felgenhauer2003,MaurerOsmolovskii2003,MaurerOsmolovskii2004,MilyutinOsmolovskii1998,Osmolovskii1994,OsmolovskiiMaurer2005,OsmolovskiiMaurer2007}.
In these contributions one typically assumes that the (differentiable) switching function
$\sigma : [0,T] \to \R$ has finitely many zeros.
Our structural assumption \eqref{eq:measure_regularity} can be considered as an extension to the distributed parameter case.

Bilinear control problems for time-dependent equations were studied, e.g., in \cite{AronnaBonnansGoh2016,AronnaBonnansKroner16},
see also the references in these papers. By means of the Goh transform,
the bilinear control problem is transferred into a problem, where the control
appears linearly. It is an open problem, whether the idea of Goh transform can be
applied to control of elliptic (thus time-independent) equations.

\section{Abstract framework}
\label{S2}
Throughout this section we assume that $(X,\mathcal{B},\eta)$ is a finite and complete measure space. We consider the abstract optimization problem
\begin{equation*}
	\label{eq:prob}
	\tag{\textup{P}}
	\begin{aligned}
		&\text{Minimize }\ J(u) \\
		&\text{subject to }\ u \in \uad,
	\end{aligned}
\end{equation*}
where
\begin{equation}
\uad = \{u \in L^\infty(X) : \alpha \le u(x) \le \beta \quad \text{a.e.~in } X\}
\label{uad}
\end{equation}
with $-\infty < \alpha < \beta < +\infty$, and $J:\uad \to \R$ is a given function.

In the sequel, we will denote the open ball with respect to the $L^p(X)$-norm of radius $r>0$ around $v\in L^p(X)$ by $B^p_r(v)$.

\subsection{A negative result in the non-bang-bang case}
\label{S21}
In this section, we prove that we cannot expect \emph{any} growth of the objective,
if the optimal control is not of bang-bang type.
\begin{theorem}
	\label{thm:no-growth}
	Let us assume that the measure space $(X,\BB,\eta)$
	is additionally separable and non-atomic.
	Suppose that $\bar u$ is a local minimizer of \Pb
	in the sense of $L^1(X)$, which is not bang-bang.
	Further, we assume that $J$ is weak* sequentially continuous from $L^\infty(X)$ to $\R$.
	Then, there exists $\delta_0 > 0$ such that for any $\delta \in (0,\delta_0]$ and
	for any $\varepsilon > 0$,
	there exists $u \in \uad$
	with
	\begin{equation}
		\label{eq:no-growth}
		\norm{u - \bar u}_{L^1(X)} = \delta\ \text{ and }\ J(u) \le J(\bar u) + \varepsilon.
	\end{equation}
\end{theorem}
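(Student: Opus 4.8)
Since $\bar u$ is not bang-bang, there is a set $A \in \BB$ with $\eta(A) > 0$ and a number $\rho > 0$ such that $\alpha + \rho \le \bar u(x) \le \beta - \rho$ for all $x \in A$ (take a countable union of the level sets $\{\alpha + 1/k \le \bar u \le \beta - 1/k\}$ and pick one with positive measure). The idea is to construct, for each prescribed $\delta$, a perturbation $u$ that moves mass $\delta$ off of $\bar u$ but in a way that is \emph{oscillating at a fine scale}, so that $u \weaklystar \bar u$ in $L^\infty(X)$; then weak* sequential continuity of $J$ forces $J(u)$ close to $J(\bar u)$, while the $L^1$-distance stays exactly $\delta$.

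First I would fix $\delta_0 := \rho\,\eta(A)$ and take any $\delta \in (0,\delta_0]$. Using that $(X,\BB,\eta)$ is separable and non-atomic, I would invoke a Lyapunov-type / Sierpi\'nski theorem to obtain, for each $m \in \mathbb N$, a measurable partition $A = A_1^m \cup \dots \cup A_{2m}^m$ into $2m$ pieces of equal measure $\eta(A)/(2m)$. Define
\begin{equation*}
	u_m := \bar u + \frac{\delta}{\rho\,\eta(A)}\,\rho \sum_{j=1}^{m} \bigl(\chi_{A_{2j-1}^m} - \chi_{A_{2j}^m}\bigr)
	= \bar u + \frac{\delta}{\eta(A)} \sum_{j=1}^{m} \bigl(\chi_{A_{2j-1}^m} - \chi_{A_{2j}^m}\bigr).
\end{equation*}
Then $u_m \in \uad$: on each piece the added perturbation has absolute value $\delta/\eta(A) \le \rho$, so the bounds $\alpha \le u_m \le \beta$ are preserved. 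Moreover $\norm{u_m - \bar u}_{L^1(X)} = \frac{\delta}{\eta(A)} \cdot 2m \cdot \frac{\eta(A)}{2m} = \delta$ for every $m$, which gives the equality in \eqref{eq:no-growth}.

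Next I would show $u_m \weaklystar \bar u$ in $L^\infty(X)$, equivalently $\int_X (u_m - \bar u)\,g\deta \to 0$ for every $g \in L^1(X)$. It suffices to check this for $g$ in a dense subset, or one can argue directly: the functions $g_m := \sum_{j}(\chi_{A_{2j-1}^m} - \chi_{A_{2j}^m})$ are uniformly bounded by $1$ and converge weakly to $0$ in $L^2$ (hence weak* in $L^\infty$) because for any fixed measurable set $E$, $\int_E g_m \deta \to 0$ by the equidistribution of the partition — this is where separability and non-atomicity are genuinely used, to build the partitions and to make the oscillation cancel against arbitrary test functions. By the assumed weak* sequential continuity of $J$, $J(u_m) \to J(\bar u)$, so for any $\varepsilon > 0$ we may choose $m$ large enough that $J(u_m) \le J(\bar u) + \varepsilon$, and set $u := u_m$.

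The main obstacle I anticipate is the construction and control of the partitions $A_j^m$: one needs not only equal measures but enough ``spreading'' so that $g_m \weaklystar 0$ against \emph{every} $L^1$ test function, not merely against indicators of finitely many sets. This is exactly the point where separability of $(X,\BB,\eta)$ enters — it lets one exhaust $L^1(X)$ by a countable dense family and build the partitions by a diagonal/refinement argument so that the weak* convergence holds. Everything else (membership in $\uad$, the exact $L^1$-distance, the final selection of $m$) is elementary bookkeeping. One should also note that the hypothesis that $\bar u$ is a \emph{local} minimizer in $L^1$ is not used in the construction at all — the statement is purely that no growth estimate can hold, which is consistent with the theorem being a ``negative result.''
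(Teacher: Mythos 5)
Your overall architecture is exactly the paper's: locate a set $A$ of positive measure where $\alpha+\rho\le\bar u\le\beta-\rho$, set $\delta_0=\rho\,\eta(A)$, perturb by $\frac{\delta}{\eta(A)}$ times a $\{\pm1\}$-valued oscillation supported on $A$ (which keeps $u\in\uad$ and fixes $\norm{u-\bar u}_{L^1(X)}=\delta$ exactly), drive the oscillation to $0$ in the weak* sense, and invoke the weak* sequential continuity of $J$. The computations of admissibility and of the $L^1$-distance are fine, and your closing remark that local optimality of $\bar u$ is never used is accurate.

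The one place where you diverge from the paper is the production of the weak*-null sequence of $\pm1$-valued functions, and as literally written your argument has a gap there. The claim that ``for any fixed measurable set $E$, $\int_E g_m \deta \to 0$ by the equidistribution of the partition'' is false for an arbitrary partition of $A$ into $2m$ pieces of equal measure: nothing prevents all the ``$+1$'' pieces $A^m_{2j-1}$ from being chosen inside $E$, in which case $\int_E g_m\deta$ stays bounded away from $0$. Equal measures control only the pairing with $\chi_A$, not with general test functions. You do flag this and point to the correct repair — a Lyapunov-type splitting that equidistributes against a countable dense family of $L^1$ test functions, followed by a diagonal argument — but that construction is the entire content of the step and is left unexecuted. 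The paper sidesteps it by a soft argument (its \cref{lem:weak_approximation}): the weak closure in $L^2(B)$ of $\F=\{v: v(x)\in\{-1,1\}\}$ is $\{v: v(x)\in[-1,1]\}$ (citing \cite[Proposition~6.4.19]{PapageorgiouKyritsi-Yiallourou2009}), and since $L^2(B)$ is reflexive and separable the weak topology is metrizable on the bounded set $\overline{\F}^w$, so $0$ is the weak limit of an actual sequence $v_k\in\F$; uniform boundedness then upgrades $v_k\weakly 0$ in $L^2$ to $v_k\weaklystar 0$ in $L^\infty$. If you want to keep your constructive route, you should either carry out the Lyapunov/diagonal construction in detail or replace that step by the citation the paper uses; with that repair the proof is complete.
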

Before proving the theorem we give some remarks and an auxiliary lemma. First, the theorem implies that
a growth of type
\begin{equation*}
	J(u) \ge J(\bar u) + \nu \norm{ u - \bar u}_{L^p(X)}^\gamma\quad \forall u \in \uad \cap B^1_\delta(\bar u)
\end{equation*}
for some $\nu, \delta, \gamma > 0$ and $p \in [1,\infty]$
is \emph{impossible}. Indeed, let us argue by contradiction. Without loss of generality we can assume that the above growth holds for some $\delta < \delta_0$. Then, according to the theorem, for every $\varepsilon > 0$ there exists $u_\varepsilon \in \uad$ such \eqref{eq:no-growth} holds. This implies with the assumed growth condition and H\"older's inequality that
\begin{align*}
\delta &= \|u_\varepsilon - \bar u\|_{L^1(X)} \le \eta(X)^{1 - \frac{1}{p}}\|u_\varepsilon - \bar u\|_{L^p(X)}\\
& \le \eta(X)^{1 - \frac{1}{p}}\Big(\frac{J(u_\varepsilon) - J(\bar u)}{\nu}\Big)^{1/\gamma} \le \frac{\eta(X)^{1 - \frac{1}{p}}}{\nu^{1/\gamma}}\varepsilon^{1/\gamma}.
\end{align*}
Finally, making $\varepsilon \to 0$ we get a contradiction.

Furthermore, even a growth of type $f(\norm{u-\bar u}_{L^p(X)})$ cannot be satisfied,
as long as $f$ is a non-decreasing function and $f(t) > 0$ for $t > 0$.

Recall that the measure space is non-atomic, if
for all $A \in \BB$ with $\eta(A) > 0$,
there is $B \in \BB$ with $B\subset A$ and $0 < \eta(B) < \eta(A)$.
The measure space is called separable, if there is a countable subset
$\{A_n\} \subset \BB$, such that
\begin{equation*}
	\forall A \in \BB \text{ and } \forall\varepsilon > 0
	\ \exists A_n :
	\eta\bigl( (A \setminus A_n) \cup (A_n \setminus A) \bigr) < \varepsilon
\end{equation*}
holds.
It is easy to check that this is equivalent to the separability of $L^p(X)$ for all $p \in [1,\infty)$. In particular, all regular Borel measures are separable measures.

Before proving the theorem, we need to state a lemma.
\begin{lemma}
	\label{lem:weak_approximation}
	Let the measure space $(X,\BB,\eta)$
	be as in \cref{thm:no-growth}.
	Let a measurable set $B \subset X$ be given.
	Then, there exists a sequence $\{v_k\} \subset L^\infty(X)$
	such that
	$v(x) = 0$ for a.a.\ $x \in X \setminus B$,
	$v(x) \in \{-1,1\}$ for a.a.\ $x \in B$
	and $v \weaklystar 0$ in $L^\infty(X)$.
\end{lemma}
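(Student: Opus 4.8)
The plan is to construct the $v_k$ by an exhaustion argument driven by the countable family of sets provided by separability, arranging that each $v_k$ has vanishing mean over all the test sets that matter at level $k$; non-atomicity is exactly what makes such a construction possible.

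First, using separability, I would fix a countable family $\{A_n\}_{n\ge 1}\subset\BB$ such that for every $A\in\BB$ and every $\varepsilon>0$ there is some $n$ with $\eta(A\triangle A_n)<\varepsilon$. For each $k\ge 1$, let $\mathcal P_k$ denote the finite partition of $B$ into the nonempty sets of the form $\bigcap_{n=1}^k C_n$ with $C_n\in\{A_n\cap B,\, B\setminus A_n\}$; equivalently, $\mathcal P_k$ consists of the atoms of the finite Boolean algebra generated by $A_1\cap B,\dots,A_k\cap B$. On each cell $P\in\mathcal P_k$ with $\eta(P)>0$, non-atomicity of $\eta$ — more precisely, the intermediate value property of finite non-atomic measures — yields a measurable set $Q_P\subset P$ with $\eta(Q_P)=\tfrac12\eta(P)$. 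I then define $v_k:=0$ on $X\setminus B$, $v_k:=-1$ on $\bigcup_{P\in\mathcal P_k}Q_P$, and $v_k:=+1$ on the remainder of $B$ (in particular on every null cell). By construction $v_k\in L^\infty(X)$ with $\norm{v_k}_{L^\infty(X)}\le 1$, $v_k$ vanishes a.e.\ on $X\setminus B$, and $v_k(x)\in\{-1,1\}$ for a.a.\ $x\in B$.

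The key elementary observation is that $\int_P v_k\deta=-\tfrac12\eta(P)+\tfrac12\eta(P)=0$ for every $P\in\mathcal P_k$ (trivially so for null cells). Since every element of the algebra generated by $A_1\cap B,\dots,A_k\cap B$ is a finite union of cells from $\mathcal P_k$, it follows that $\int_A v_k\deta=0$ for each such $A$; in particular, because $v_k$ is supported in $B$, $\int_{A_n}v_k\deta=\int_{A_n\cap B}v_k\deta=0$ whenever $n\le k$. To pass from this to $v_k\weaklystar 0$ in $L^\infty(X)$, I would use that the linear span of $\{\mathbf 1_{A_n}:n\ge 1\}$ is dense in $L^1(X)$: this follows from the density of simple functions in $L^1(X)$ together with $\norm{\mathbf 1_C-\mathbf 1_{A_n}}_{L^1(X)}=\eta(C\triangle A_n)$ and the defining property of $\{A_n\}$. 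Thus, given $\phi\in L^1(X)$ and $\varepsilon>0$, pick $\psi=\sum_{j=1}^J c_j\mathbf 1_{A_{n_j}}$ with $\norm{\phi-\psi}_{L^1(X)}<\varepsilon$; for all $k\ge\max_j n_j$ one has $\int_X v_k\psi\deta=0$, hence
\[
  \Bigabs{\int_X v_k\phi\deta}\le\norm{v_k}_{L^\infty(X)}\norm{\phi-\psi}_{L^1(X)}+\Bigabs{\int_X v_k\psi\deta}<\varepsilon .
\]
Letting $\varepsilon\to 0$ gives $\int_X v_k\phi\deta\to 0$ for every $\phi\in L^1(X)$, i.e.\ $v_k\weaklystar 0$.

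The routine ingredients are the density of simple functions in $L^1$ and the classical fact that a finite non-atomic measure attains every intermediate value on subsets of a given set. The only genuine idea — and the place where both hypotheses are indispensable — is the bookkeeping that forces $\int_{A_n}v_k\deta=0$ for \emph{all} $n\le k$ at once: non-atomicity supplies the equal-measure splittings needed to kill these integrals exactly, while separability reduces the uncountably many test functions appearing in the definition of weak* convergence to the single countable family $\{A_n\}$. Once the construction is organized this way, no step presents a real obstacle.
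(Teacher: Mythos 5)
Your proof is correct, and it takes a genuinely different route from the paper. The paper's proof is non-constructive: it sets $\F = \{v \in L^2(B) : v(x) \in \{-1,1\} \text{ a.e.}\}$, invokes a cited result (Papageorgiou--Kyritsi-Yiallourou, Prop.~6.4.19) stating that the weak closure of $\F$ in $L^2(B)$ is the full order interval $\{v : v(x) \in [-1,1]\}$ --- which contains $0$ --- and then uses separability only to metrize the weak topology on this bounded set so that a weakly convergent \emph{sequence} $v_k \weakly 0$ can be extracted; boundedness in $L^\infty$ then upgrades this to weak* convergence in $L^\infty$. You instead build the sequence explicitly: the finite partitions $\mathcal P_k$ generated by $A_1\cap B,\dots,A_k\cap B$, the equal-measure splitting of each cell via the intermediate value property of finite non-atomic measures (Sierpi\'nski), and the resulting exact cancellation $\int_{A_n} v_k\,\deta = 0$ for $n\le k$, combined with density of $\operatorname{span}\{\mathbf 1_{A_n}\}$ in $L^1(X)$. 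Your argument is self-contained and makes transparent exactly where non-atomicity (the halving of cells) and separability (reduction of the weak* test functions to a countable family) enter, at the cost of more bookkeeping; the paper's argument is shorter but outsources the essential content to the cited weak-closure result, where non-atomicity is used implicitly. Both proofs are valid, and your final approximation step (choosing $\psi$ in the span with $\norm{\phi-\psi}_{L^1(X)}<\varepsilon$ and noting $\int_X v_k\psi\,\deta=0$ for $k$ large) correctly yields $v_k \weaklystar 0$.
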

\begin{proof}
	We define the set
	\begin{equation*}
		\F = \{v \in L^2(B) : v(x) \in \{-1,1\} \text{ for a.a.\ } x \in B\}.
	\end{equation*}
	Then, according to \cite[Proposition~6.4.19]{PapageorgiouKyritsi-Yiallourou2009},
	we have
	\begin{equation*}
		\overline{\F}^w = \{v \in L^2(B) : v(x) \in [-1,1] \text{ for a.a.\ } x \in B\}.
	\end{equation*}
	where $\overline{\F}^w$ is the closure of $\F$ w.r.t.\ the weak topology of $L^2(B)$.
	The space $L^2(B)$ is reflexive and separable, since $(X,\BB,\eta)$ is assumed to be separable.
	Hence, the weak topology is metrizable on the bounded set $\overline{\F}^w$.
	Thus, there is a sequence $\{v_k\} \subset L^2(B)$ with
	$v_k \in \F$ and
	$v_k \weakly 0$ in $L^2(B)$.
	Since $\{v_k\}$ is bounded in $L^\infty(B)$, the density of $L^2(B)$ in $L^1(B)$
	implies $v_k \weaklystar 0$ in $L^\infty(B)$.
	Finally, the result follows if $v_k$ is extended by $0$ to $X$.
\end{proof}

Now we are in the position to prove \cref{thm:no-growth}.
\begin{proof}[Proof of \cref{thm:no-growth}]
	Since $\bar u$ is not bang-bang, the set
	$B = \{x \in X : \alpha + \rho \le \bar u \le \beta - \rho\}$
	has positive measure for some $\rho > 0$.
	We apply \cref{lem:weak_approximation} and obtain a sequence $\{v_k\} \subset L^\infty(X)$
	with the properties stated in \cref{lem:weak_approximation}. Set $\delta_0 = \rho \eta(B)$. Then,
	given $\delta \le \delta_0$, we consider the controls
	$u_k = \bar u + \frac{\delta}{\eta(B)} v_k$
	and obtain $u_k \in \uad$.
	Moreover, we have $\norm{u_k - \bar u}_{L^1(X)} = \delta$
	for all $k$.
	The weak* sequential continuity of $J$ implies
	$J(u_k) \to J(\bar u)$.
	Thus, for any $\varepsilon > 0$ there exists $k_\varepsilon \ge 1$ such that $J(u_k) - J(\bar u) < \varepsilon$ $\forall k \ge k_\varepsilon$, which implies \eqref{eq:no-growth}.
\end{proof}

\subsection{Second-order analysis}
\label{S22}
In this section, we consider the second-order analysis of problem \Pb.
To this end,
let $\bar u \in \uad$ be a fixed control. We make the following assumptions on $J$ and $\bar u$.
\begin{enumerate}[label=\textup{\bfseries(H\arabic*)}]
	\item\label{H1}
		The functional $J$ can be extended to an $L^\infty(X)$-neighborhood $\mathcal{A}$ of $\uad$. It is twice continuously Fr\'echet differentiable w.r.t.\ $L^\infty(X)$
		in this neighborhood. Moreover, we assume that $\bar u$ satisfies the first-order condition $J'(\bar u) (u - \bar u) \ge 0$ for all $u \in \uad$.
	\item\label{H2}
		The second derivative $J''(\bar u) : L^\infty(X)^2 \to \R$
		can be extended continuously to $L^q(X)^2$, for some $q \in [1,3/2)$. In particular, there is a constant $C > 0$, such that
		\begin{equation}
			\label{eq:boundedness_second_deriv}
				\abs{J''(\bar u) (v_1 , v_2)}
				\le
				C \, \norm{v_1}_{L^q(X)}
				\, \norm{v_2}_{L^q(X)}
				\\
		\end{equation}
		holds for all $v_1,v_2 \in L^q(X)$.
	\item\label{H3}
		For each $\varepsilon > 0$ there is  $\delta_\varepsilon > 0$ such that
		\begin{equation}
			\label{eq:contin_second_deriv_alt}
			\bigabs{[J''(u_\theta) - J''(\bar u)] (u-\bar u)^2}
			\le
			\varepsilon \, \norm{u-\bar u}_{L^1(X)}^2
		\end{equation}
		holds for all $u \in \uad \cap B_{\delta_\varepsilon}^1(\bar u)$, $u_\theta = \bar u + \theta(u-\bar u)$ and any $0 \le \theta \le 1$.
	\item\label{H4}
		There exists a function $\bar\psi \in L^1(X)$,
		such that $J'(\bar u) \, v = \int_X \bar\psi \, v \, \deta$
		for all $v \in L^\infty(X)$.
	\item\label{H5}
		There exists a constant $K > 0$, such that
		\begin{equation}
			\label{eq:measure_regularity}
			\eta(\{x \in X : \abs{\bar\psi(x)} \le \varepsilon\}) \le K \, \varepsilon
		\end{equation}
		is satisfied for all $\varepsilon > 0$.
\end{enumerate}

Let us observe that \ref{H1}, \ref{H4} and \ref{H5} imply that $\bar u$ is a bang-bang control.

Under the previous assumptions we can prove some sufficient second-order optimality conditions for $\bar u$. To this end we introduce the following cone of critical directions: for every $\tau > 0$ we define
\begin{equation}
	\label{eq:critical_cone}
	C_{\bar u}^\tau := \big\{v \in L^2(X) : v(x) = 0 \text{ if } |\bar\psi(x)| > \tau \text{ and } v \text{ satisfies \eqref{E2.13}}\big\}
\end{equation}
with
\begin{equation}
	v(x)
	\begin{cases}
		\ge 0 &\text{ if } \bar{u}(x) = \alpha,\\
		\le 0 &\text{ if } \bar{u}(x) = \beta,
	\end{cases}
	\quad\text{for a.a.\ } x \in X.
\label{E2.13}
\end{equation}

Before establishing the second-order conditions we state the following result, whose proof can be found in \cite[Proposition 2.7]{CWW2017}.

\begin{theorem}\label{T2.1}
Let us assume that \ref{H1}, \ref{H4} and \ref{H5} hold, then
\begin{equation}
J'(\bar u)(u - \bar u) \ge \kappa \|u - \bar u\|^2_{L^1(X)}\quad \forall u \in \uad,
\label{E2.24}
\end{equation}
where $\kappa = (4(\beta - \alpha)K)^{-1}$.
\end{theorem}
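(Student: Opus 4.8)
The plan is to reduce everything to a pointwise sign identity for $\bar\psi(u-\bar u)$ and then to exploit the thickness estimate \eqref{eq:measure_regularity} with a free splitting parameter that is optimized at the end.

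First I would use \ref{H4} to write $J'(\bar u)(u-\bar u) = \int_X \bar\psi\,(u-\bar u)\deta$. The first-order condition in \ref{H1}, namely $\int_X \bar\psi\,(u-\bar u)\deta \ge 0$ for all $u\in\uad$, is a pointwise condition in disguise: testing with localized perturbations forces $\bar u(x)=\alpha$ for a.a.\ $x$ with $\bar\psi(x)>0$ and $\bar u(x)=\beta$ for a.a.\ $x$ with $\bar\psi(x)<0$; and \eqref{eq:measure_regularity} with $\varepsilon\to 0$ gives $\eta(\{\bar\psi=0\})=0$. Consequently, for \emph{any} $u\in\uad$ we have $u(x)-\bar u(x)\ge 0$ where $\bar\psi(x)>0$ and $u(x)-\bar u(x)\le 0$ where $\bar\psi(x)<0$, so that
\begin{equation*}
	J'(\bar u)(u-\bar u) = \int_X \bar\psi\,(u-\bar u)\deta = \int_X |\bar\psi|\,|u-\bar u|\deta .
\end{equation*}

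Next, fix $\varepsilon>0$ and split $X$ into $X_\varepsilon^- = \{x : |\bar\psi(x)|\le\varepsilon\}$ and its complement. Discarding the (nonnegative) contribution of $X_\varepsilon^-$ and bounding $|\bar\psi|\ge\varepsilon$ on the complement gives
\begin{equation*}
	J'(\bar u)(u-\bar u) \ge \varepsilon \int_{X\setminus X_\varepsilon^-} |u-\bar u|\deta
	= \varepsilon\Big(\norm{u-\bar u}_{L^1(X)} - \int_{X_\varepsilon^-}|u-\bar u|\deta\Big).
\end{equation*}
Since $0\le u,\bar u$ satisfy $\alpha\le u,\bar u\le\beta$, we have $|u-\bar u|\le\beta-\alpha$ pointwise, hence $\int_{X_\varepsilon^-}|u-\bar u|\deta \le (\beta-\alpha)\,\eta(X_\varepsilon^-)\le (\beta-\alpha)K\varepsilon$ by \eqref{eq:measure_regularity}. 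Writing $d:=\norm{u-\bar u}_{L^1(X)}$, this yields $J'(\bar u)(u-\bar u)\ge \varepsilon d - (\beta-\alpha)K\varepsilon^2$ for every $\varepsilon>0$.

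Finally I would optimize the right-hand side over $\varepsilon$: the quadratic $\varepsilon\mapsto \varepsilon d-(\beta-\alpha)K\varepsilon^2$ is maximized at $\varepsilon = d/(2(\beta-\alpha)K)$, with maximal value $d^2/(4(\beta-\alpha)K)$, which gives exactly $J'(\bar u)(u-\bar u)\ge\kappa\,d^2$ with $\kappa = (4(\beta-\alpha)K)^{-1}$. (If $d=0$ the inequality is trivial.) There is no genuine obstacle here; the only points requiring a little care are the justification of the pointwise sign identity from the first-order condition together with $\eta(\{\bar\psi=0\})=0$, and the realization that $\varepsilon$ must be chosen depending on $\norm{u-\bar u}_{L^1(X)}$ rather than fixed a priori — choosing $\varepsilon$ proportional to $d$ is what converts the linear lower bound into the quadratic growth.
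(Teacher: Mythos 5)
Your argument is correct and is essentially the proof of this result that the paper invokes from \cite[Proposition 2.7]{CWW2017}: derive the pointwise identity $\bar\psi(u-\bar u)=|\bar\psi|\,|u-\bar u|$ from the first-order condition together with $\eta(\{\bar\psi=0\})=0$, split $X$ according to $|\bar\psi|\lessgtr\varepsilon$, bound the almost-inactive part via \eqref{eq:measure_regularity} and $|u-\bar u|\le\beta-\alpha$, and optimize the resulting bound $\varepsilon d-(\beta-\alpha)K\varepsilon^2$ at $\varepsilon=d/(2(\beta-\alpha)K)$. The only cosmetic slip is the irrelevant assertion $0\le u,\bar u$, which is neither assumed in the abstract setting of \cref{S2} nor needed for the estimate.
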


The next theorem provides a second-order condition
which allows us to prove a quadratic growth of the objective $J$
in the neighborhood of $\bar u$.
In particular, $\bar u$ is a strict local solution
under this assumption.
Note that condition \eqref{eq305}
is slightly weaker
than the corresponding results
\cite[Theorems~2.8 and 3.3]{CWW2017},
which required $\kappa' < \kappa$ in \eqref{eq305}.
This improvement has been possible by some slightly more refined
estimates in the proof.
\begin{theorem}\label{T2.2}
Suppose that the above assumptions \ref{H1}--\ref{H5} are satisfied. Let $\kappa$ be as in \cref{T2.1}. Further, we assume that
\begin{equation}\label{eq305}
		\exists \tau > 0,\; \exists \kappa' < 2\kappa :\quad
		J''(\bar u)v^2 \ge -\kappa'\|v\|^2_{L^1(X)} \ \ \forall v \in C^\tau_{\bar u}.
	\end{equation}
	Then, there exist $\nu > 0$ and $\delta > 0$ such that
	\begin{equation}
		J(\bar u) + \nu\|u - \bar u\|^2_{L^1(X)} \le J(u) \ \ \ \forall u \in \uad \cap B^1_\delta(\bar u).
		\label{E2.9}
\end{equation}
\end{theorem}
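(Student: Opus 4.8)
The plan is to argue by contradiction: suppose no such $\nu$ and $\delta$ exist. Then for every $k \in \mathbb{N}$ there is $u_k \in \uad$ with $\norm{u_k - \bar u}_{L^1(X)} \le 1/k$ and
\begin{equation*}
	J(u_k) < J(\bar u) + \tfrac1k \norm{u_k - \bar u}_{L^1(X)}^2.
\end{equation*}
Write $\rho_k := \norm{u_k - \bar u}_{L^1(X)}$, which is positive (since $\bar u$ is a strict... actually since otherwise $u_k = \bar u$ and the inequality fails), and set $v_k := (u_k - \bar u)/\rho_k$, so $\norm{v_k}_{L^1(X)} = 1$. By a Taylor expansion of $J$ along the segment $[\bar u, u_k]$, there is $\theta_k \in [0,1]$ with $u_{\theta_k} = \bar u + \theta_k(u_k - \bar u)$ such that
\begin{equation*}
	J(u_k) = J(\bar u) + J'(\bar u)(u_k - \bar u) + \tfrac12 J''(u_{\theta_k})(u_k - \bar u)^2.
\end{equation*}
Using \ref{H3} to replace $J''(u_{\theta_k})$ by $J''(\bar u)$ up to an error bounded by $\varepsilon \rho_k^2$, and combining with the contradiction hypothesis and \cref{T2.1} (which gives $J'(\bar u)(u_k - \bar u) \ge \kappa \rho_k^2$), one obtains, after dividing by $\rho_k^2$,
\begin{equation*}
	\kappa + \tfrac12 J''(\bar u) v_k^2 \le \tfrac1k + \varepsilon + o(1),
\end{equation*}
i.e.\ $\limsup_k J''(\bar u) v_k^2 \le -2\kappa$ (after letting $\varepsilon$ and then $k$ go appropriately; one has to be slightly careful since $\varepsilon$ depends on the choice made through $\delta_\varepsilon$, but $\rho_k \to 0$ handles this).

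Next I would extract a weak limit. The sequence $\{v_k\}$ is bounded in $L^1(X)$ but that alone gives no weak compactness; here the key is to exploit \ref{H2}. Actually the natural move, used in \cite{CWW2017}, is: since $v_k$ has the sign structure forced by the constraints (namely $v_k(x) \ge 0$ where $\bar u(x) = \alpha$ and $v_k(x) \le 0$ where $\bar u(x) = \beta$, because $u_k \in \uad$ and $\bar u$ is bang-bang), and since $|v_k| \le (\beta-\alpha)/\rho_k$ pointwise... that blows up, so instead one truncates. The cleaner route: pass to a subsequence such that $v_k \weakly v$ in $L^q(X)$ — this requires first showing $\{v_k\}$ is bounded in $L^q(X)$, which is NOT automatic. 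Here one uses \ref{H5}: on the set where $|\bar\psi|$ is large, the first-order optimality forces $u_k = \bar u$, so $v_k$ is supported where $\bar\psi$ is small, and \eqref{eq:measure_regularity} controls the measure of that support; combined with $\norm{v_k}_{L^1} = 1$ and interpolation, one gets a uniform $L^q$ bound for $q < 3/2$ (this is exactly the role of the exponent restriction). Having $v_k \weakly v$ in $L^q(X)$, I would then check $v \in C^\tau_{\bar u}$ for a suitable $\tau$: the support condition and sign conditions pass to the weak limit (sign conditions are preserved under weak $L^q$ limits since the cone is convex and closed).

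Then I would use the weak lower semicontinuity properties: \ref{H2} says $J''(\bar u)$ is a bounded bilinear form on $L^q(X)$, and one needs $\liminf_k J''(\bar u) v_k^2 \ge J''(\bar u) v^2$ — but a general bounded bilinear form is not weakly lsc. The resolution (again following the structure of \cite{CWW2017}) is to split $J''(\bar u) v_k^2 = J''(\bar u)(v_k - v)^2 + 2 J''(\bar u)(v_k - v, v) + J''(\bar u) v^2$; the middle term vanishes in the limit by weak convergence, so it remains to handle $J''(\bar u)(v_k - v)^2$. One now uses the refined estimate alluded to in the remark before the theorem: $\norm{v_k - v}_{L^1}$ can be estimated, and since $v_k - v \weakly 0$, one shows $J''(\bar u)(v_k-v)^2 \ge -\kappa' \norm{v_k - v}_{L^1}^2 + o(1) \ge -\kappa' \cdot (\text{something} \le 1) + o(1)$ after noting $\norm{v_k - v}_{L^1} \le \norm{v_k}_{L^1} + \norm{v}_{L^1}$ and a careful bookkeeping — actually the sharp point is that $\norm{v_k}_{L^1}^2 = 1$ while one wants to absorb with $2\kappa$, and the improvement over \cite{CWW2017} (allowing $\kappa' < 2\kappa$ rather than $\kappa' < \kappa$) comes from splitting the unit $L^1$-mass between the $v_k - v$ part and the cross/limit part and using \eqref{E2.24} more efficiently on each piece. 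Putting the pieces together: $-2\kappa \ge \liminf_k J''(\bar u) v_k^2 \ge J''(\bar u) v^2 - \kappa' \ge -\kappa' \norm{v}_{L^1}^2 - \kappa' \ge -2\kappa'$ roughly, contradicting $\kappa' < 2\kappa$ provided the $L^1$-masses are accounted for so that the total deficit is strictly less than $2\kappa$; if $v \ne 0$ this is the critical-cone estimate \eqref{eq305}, and if $v = 0$ one gets $-2\kappa \ge 0$ directly, also a contradiction.

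The main obstacle I expect is the last step: obtaining the bound on $J''(\bar u)(v_k - v)^2$ and arranging the split of the $L^1$-masses so that the strict inequality $\kappa' < 2\kappa$ is enough. This is where one must be careful that $v_k - v$ is (asymptotically) in the critical cone — its support condition is inherited, but its sign condition is the issue, since $v$ need not have the extreme sign properties that $v_k$ has; one circumvents this by applying \eqref{E2.24} to $u = \bar u \pm t\,\text{(truncation of }v_k - v)$ rather than to the cone directly, trading the sign condition for the $L^1$-growth estimate of \cref{T2.1}. A secondary technical point is justifying the $L^q$-boundedness of $\{v_k\}$ rigorously via \ref{H5}, i.e.\ the interpolation $\norm{v_k}_{L^q}^q \le \norm{v_k}_{L^1} \norm{v_k}_{L^\infty}^{q-1}$ together with $\norm{v_k}_{L^\infty} \le (\beta - \alpha)/\rho_k$ and $\rho_k \ge$ (constant)$\cdot \eta(\operatorname{supp} v_k)$, the latter because $|v_k| \le (\beta-\alpha)/\rho_k$ forces $\operatorname{supp} v_k$ to be not too small; matching exponents forces $q < 3/2$.
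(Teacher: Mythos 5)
Your overall strategy (contradiction, normalization $v_k=(u_k-\bar u)/\rho_k$, weak limit in $L^q$, lower semicontinuity of $J''(\bar u)$ along the critical cone) is the classical one for problems with an $L^2$-coercive second-order term, but it breaks down here at the compactness step, and the paper avoids that route for exactly this reason. The sequence $\{v_k\}$ is bounded only in $L^1(X)$: interpolation gives $\norm{v_k}_{L^q(X)}^q \le \norm{v_k}_{L^1(X)}\,\norm{v_k}_{L^\infty(X)}^{q-1} \le ((\beta-\alpha)/\rho_k)^{q-1}$, which blows up for every $q>1$ as $\rho_k\to0$, and nothing prevents concentration (take $u_k-\bar u=(\beta-\alpha)\chi_{E_k}$ with $\eta(E_k)=\rho_k/(\beta-\alpha)$; then $\norm{v_k}_{L^q(X)}\to\infty$). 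Your attempted rescue via \ref{H5} does not work: the inequality $\rho_k\ge c\,\eta(\operatorname{supp}v_k)$ is false, since there is no pointwise lower bound on $\abs{u_k-\bar u}$ on its support; and the claim that ``on the set where $\abs{\bar\psi}$ is large, the first-order optimality forces $u_k=\bar u$'' is also false --- $u_k$ is an arbitrary feasible competitor, not a minimizer, so first-order optimality of $\bar u$ yields only $J'(\bar u)(u_k-\bar u)=\int_X\abs{\bar\psi}\,\abs{u_k-\bar u}\,\deta\ge\tau\norm{u_k-\bar u}_{L^1(X_\tau)}$, a linear penalization rather than vanishing of $u_k-\bar u$ on $X_\tau$. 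Without a weak limit $v$, the entire second half of your argument (membership of $v$ in $C_{\bar u}^\tau$, the splitting of $J''(\bar u)v_k^2$, and the final bookkeeping, which you in any case leave vague) has nothing to stand on.

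The paper's proof is direct and never normalizes or passes to limits. \cref{L2.1} splits $u-\bar u=(u_1-\bar u)+u_2$ with $u_1-\bar u\in C_{\bar u}^\tau$ and $u_2$ supported on $X_\tau=\{x\in X:\abs{\bar\psi(x)}>\tau\}$, estimates the cross and $u_2$ contributions to $J''(\bar u)(u-\bar u)^2$ by $C\norm{u-\bar u}_{L^1(X_\tau)}^{\bar q}$ with $\bar q=2/(2q-1)>1$ via the interpolation bound \eqref{eq:bounds_Jdd} (this is where $q<3/2$ enters, not through an $L^q$ bound on normalized directions), and absorbs these terms into the linear penalization $\frac{\gamma\tau}{3\kappa}\norm{u-\bar u}_{L^1(X_\tau)}$ supplied by the first-order growth of \cref{T2.1}. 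The threshold $2\kappa$ then arises by writing $J'(\bar u)(u-\bar u)$ as two halves in the Taylor expansion and applying \cref{T2.1} to the spare half --- not from a mass-splitting between $v_k-v$ and $v$ as you suggest. If you insist on a contradiction argument you would have to pass to weak-* limits in the space of measures and extend $J''(\bar u)$ to that space, which is a substantially different and harder route than what you outline.
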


The following lemma will be used to prove this theorem.

\begin{lemma}\label{L2.1}
Suppose that the above assumptions \ref{H1}--\ref{H5} are satisfied. Let $\kappa$ be as in \cref{T2.1}. Further, we assume that
there exist $\tau > 0$ and $\kappa' \ge 0$
such that
\begin{equation}\label{eq305var}
	J''(\bar u)v^2 \ge -\kappa'\|v\|^2_{L^1(X)} \ \ \forall v \in C^\tau_{\bar u}.
\end{equation}
Then,
for every $\gamma \in (0,3\kappa)$,
there is a $\delta > 0$
such that
\begin{equation}
	J'(\bar u)(u - \bar u) + J''(u_\theta)(u - \bar u)^2 \ge (\kappa - \kappa' - \gamma)\|u - \bar u\|^2_{L^1(X)} \ \ \forall u \in \uad \cap B^1_\delta(\bar u),
\label{E2.25}
\end{equation}
where $u_\theta = \bar u + \theta(u - \bar u)$ and $0 \le \theta \le 1$ is arbitrary.
\end{lemma}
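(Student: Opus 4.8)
The plan is to decompose the increment $v := u - \bar u$ according to the size of the switching function $\bar\psi$ and to play the perturbed second-order term $J''(u_\theta)v^2$ against the first-order term $J'(\bar u)(u-\bar u)$, which by \cref{T2.1} already controls $\kappa\,\|u-\bar u\|_{L^1(X)}^2$ from below. Set $v_1 := v\,\chi_{\{|\bar\psi|\le\tau\}}$ and $v_2 := v\,\chi_{\{|\bar\psi|>\tau\}}$, so that $v = v_1 + v_2$ with disjoint supports and $\abs{v_i} \le \beta-\alpha$. Since $u \in \uad$, the direction $v$ obeys the sign conditions \eqref{E2.13}, hence so does $v_1$; as $v_1$ vanishes where $\abs{\bar\psi}>\tau$, we have $v_1 \in C^\tau_{\bar u}$. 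From \ref{H1} and \ref{H4} one has the pointwise identity $\bar\psi(x)\,v(x) = \abs{\bar\psi(x)}\,\abs{v(x)}$ a.e.\ (and $\{\bar\psi=0\}$ is null by \ref{H5}), so that $J'(\bar u)(u-\bar u) = \int_X \abs{\bar\psi}\,\abs{v}\,\deta$; this gives the two bounds
\begin{equation*}
	\tau\,\|v_2\|_{L^1(X)} \le J'(\bar u)(u-\bar u) \qquad\text{and}\qquad \kappa\,\|u-\bar u\|^2_{L^1(X)} \le J'(\bar u)(u-\bar u),
\end{equation*}
the latter being \cref{T2.1}.

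Next I would expand, by bilinearity of $J''(\bar u)$,
\begin{equation*}
	J''(u_\theta)(u-\bar u)^2 = J''(\bar u)v_1^2 + 2\,J''(\bar u)(v_1,v_2) + J''(\bar u)v_2^2 + \bigl[J''(u_\theta)-J''(\bar u)\bigr](u-\bar u)^2,
\end{equation*}
and estimate the four terms: the first by \eqref{eq305var} (together with $\kappa'\ge 0$ and $\|v_1\|_{L^1(X)}\le\|v\|_{L^1(X)}$), giving $\ge -\kappa'\|u-\bar u\|_{L^1(X)}^2$; the last by \ref{H3}, giving $\ge -\varepsilon\|u-\bar u\|_{L^1(X)}^2$ once $u$ lies in a sufficiently small $L^1$-ball; and the two middle terms by \ref{H2} combined with the interpolation inequality $\|w\|_{L^q(X)} \le \|w\|_{L^\infty(X)}^{1-1/q}\|w\|_{L^1(X)}^{1/q}$ for $w = v_1,v_2$, which reduces them, up to fixed constants, to the error terms $\|v\|_{L^1(X)}^{1/q}\|v_2\|_{L^1(X)}^{1/q}$ and $\|v_2\|_{L^1(X)}^{2/q}$.

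The crucial step is to absorb these two error terms. Applying Young's inequality with the conjugate pair $(2q,\,\tfrac{2q}{2q-1})$ to the first one gives, for any $\rho>0$, a bound $\rho\,\|u-\bar u\|_{L^1(X)}^2 + C(\rho)\,\|v_2\|_{L^1(X)}^{2/(2q-1)}$; the second one is already of the form $\|v_2\|_{L^1(X)}^{2/q}$. Here the hypothesis $q<3/2$ is decisive: it makes both exponents $\tfrac{2}{2q-1}$ and $\tfrac2q$ strictly larger than $1$, so that each of $\|v_2\|_{L^1(X)}^{2/(2q-1)}$ and $\|v_2\|_{L^1(X)}^{2/q}$ equals $\|v_2\|_{L^1(X)}$ times a positive power of $\|v_2\|_{L^1(X)}\le\|u-\bar u\|_{L^1(X)}<\delta$, hence times a factor tending to $0$ as $\delta\to 0$; replacing the surviving copy of $\|v_2\|_{L^1(X)}$ by $\tau^{-1}J'(\bar u)(u-\bar u)$ turns these into $o(1)\cdot J'(\bar u)(u-\bar u)$. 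Collecting all estimates yields
\begin{equation*}
	J'(\bar u)(u-\bar u) + J''(u_\theta)(u-\bar u)^2 \ge \bigl(1-o(1)\bigr)J'(\bar u)(u-\bar u) - \bigl(\kappa'+\varepsilon+c\,\rho\bigr)\|u-\bar u\|^2_{L^1(X)},
\end{equation*}
and bounding the first summand below by $(1-o(1))\kappa\,\|u-\bar u\|^2_{L^1(X)}$ via \cref{T2.1} reduces the claim to choosing $\rho$, then $\varepsilon$ (which fixes $\delta_\varepsilon$ from \ref{H3}), and finally $\delta\le\delta_\varepsilon$ small enough that $o(1)\,\kappa + \varepsilon + c\,\rho < \gamma$; the condition $\gamma<3\kappa$ leaves ample room for this splitting. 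I expect the main obstacle to be exactly this absorption: $\|v_2\|_{L^q(X)}^2$ and $\|v_1\|_{L^q(X)}\|v_2\|_{L^q(X)}$ only control $\|v_2\|_{L^1(X)}$ to a power below $2$ and so cannot be absorbed into $\|u-\bar u\|_{L^1(X)}^2$ directly — the interplay between the integrability restriction $q<3/2$ in \ref{H2} and the first-order estimate of \cref{T2.1} is what closes the argument — and one must take care to freeze the Young constant $C(\rho)$ before letting $\delta\to0$.
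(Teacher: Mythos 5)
Your proof is correct and follows essentially the same route as the paper's: the identical decomposition $v = v\,\chi_{\{|\bar\psi|\le\tau\}} + v\,\chi_{\{|\bar\psi|>\tau\}}$, the interpolation of the \ref{H2} bound between $L^1$ and $L^\infty$, Young's inequality exploiting $q<3/2$, and the absorption of the resulting superlinear powers of $\|v_2\|_{L^1(X)}$ into the first-order term via $\tau\|v_2\|_{L^1(X)} \le J'(\bar u)(u-\bar u)$ together with \cref{T2.1}. The only (harmless) difference is bookkeeping: the paper splits off a fixed fraction $\gamma/(3\kappa)$ of $J'(\bar u)(u-\bar u)$ for the absorption (whence the restriction $\gamma<3\kappa$), whereas you spend only an $o(1)$ fraction, which in fact yields the conclusion for every $\gamma>0$.
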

\begin{proof}
	We follow the idea of the proofs of \cite[Theorems~2.8 and 3.3]{CWW2017}.
	First, we note that \eqref{eq:boundedness_second_deriv}
	implies that
	\begin{align}
		\abs{J''(\bar u) (v_1,v_2)}
		&\le
		C \, \norm{v_1}_{L^1(X)}^{1/q} \, \norm{v_2}_{L^1(X)}^{1/q} \, \norm{v_1}_{L^\infty(X)}^{(q-1)/q} \, \norm{v_2}_{L^\infty(X)}^{(q-1)/q}
		\label{eq:bounds_Jdd}
	\end{align}
	holds $\forall v_1, v_2 \in L^\infty(X)$.
	Now, let $u \in \uad$ with $\norm{u - \bar u}_{L^1(X)} \le \delta$ be given,
	where $\delta > 0$ will be specified later.
	We define
	\[
		u_1(x) :=
		\begin{cases}
			\bar u(x) & \text{if } x \in X_\tau,\\
			u(x) & \text{otherwise,}
		\end{cases}
		\quad\text{and}\quad
		u_2(x)  :=
		\begin{cases}
			u(x) - \bar u(x) & \text{if } x \in X_\tau,\\
			0 & \text{otherwise,}
		\end{cases}
	\]
	where $X_\tau = \{x \in X : |\bar\psi(x)| > \tau\}$.
	Then we have that $u = u_1 + u_2$, $(u_1 - \bar u) \in C_{\bar u}^\tau$,
	and $|u_1 - \bar u| \le |u - \bar u|$ a.e.\ in $X$.
	Let $\gamma \in (0, 3\kappa)$ be given.
	Now, we can use
	\eqref{eq305var}, \eqref{eq:bounds_Jdd}
	and Young's inequality
	to obtain for generic positive constants $C$
	\begin{align*}
		J''(\bar u) (u - \bar u)^2
		&=
		J''(\bar u) (u_1 - \bar u)^2
		+
		2 \, J''(\bar u) (u_1 - \bar u, u_2)
		+
		J''(\bar u) u_2^2
		\\
		&\ge
		-\kappa' \, \norm{u_1 - \bar u}_{L^1(X)}^2
		- C \, \norm{u_1 - \bar u}_{L^1(X)}^{1/q} \, \norm{u_2}_{L^1(X)}^{1/q}
		- C \, \norm{u_2}_{L^1(X)}^{2/q}
		\\
		&\ge
		-\Big(\kappa' + \frac\gamma3\Big) \, \norm{u_1 - \bar u}_{L^1(X)}^2
		- C \, \norm{u_2}_{L^1(X)}^{2/(2q-1)}
		- C \, \norm{u_2}_{L^1(X)}^{2/q}.
	\end{align*}
	Owing to the construction of $u_1$ and $u_2$, we have
	for $\delta$ small enough
	\begin{equation}
		\label{eq:in_proof_of_some_lemma}
		J''(\bar u)(u - \bar u)^2
		\ge
		-\Big(\kappa'+\frac\gamma3\Big) \|u-\bar u\|_{L^1(X)}^2 - C \|u - \bar u\|_{L^1(X_\tau)}^{\bar q}
	\end{equation}
	with $\bar q = \min(2/(2q-1), \,2/q) = 2/(2q-1) > 1$,
	since $1 \le q < 3/2$.
	Next, we use \cref{T2.1} to infer
	\begin{align}
		\notag
		J'(\bar u) (u - \bar u)
		&=
		\Big(1 - \frac\gamma{3\,\kappa} \Big) \, J'(\bar u) (u - \bar u)
		+
		\frac\gamma{3\,\kappa} \, J'(\bar u) (u - \bar u)
		\\
		\notag
		&\ge
		\Big(\kappa - \frac\gamma{3}\Big) \, \norm{u - \bar u}_{L^1(X)}^2
		+
		\frac\gamma{3\,\kappa} \, \int_{X_\tau} \abs{\bar\psi} \, \abs{u - \bar u} \, \deta
		\\
		\label{eq:first_order_growth_in_lemma}
		&\ge
		\Big(\kappa - \frac\gamma{3}\Big) \, \norm{u - \bar u}_{L^1(X)}^2
		+
		\frac{\gamma\,\tau}{3\,\kappa} \norm{u - \bar u}_{L^1(X_\tau)}
		.
	\end{align}
	Furthermore, assumption \ref{H3} implies
	\begin{equation}
		\label{eq:remainder_in_lemma}
		\bigabs{[J''(u_\theta) - J''(\bar u)] \, (u-\bar u)^2}
		\le
		\frac\gamma3 \, \norm{u-\bar u}_{L^1(X)}^2
	\end{equation}
	if $\delta$ is chosen small enough.
	Now, by adding the inequalities
	\eqref{eq:in_proof_of_some_lemma}, \eqref{eq:first_order_growth_in_lemma}
	and \eqref{eq:remainder_in_lemma}, we have
	\begin{align*}
		J'(\bar u ) (u - \bar u)
		+
		J''(u_\theta) (u - \bar u)^2
		&\ge
		(\kappa - \kappa' - \gamma) \, \norm{u - \bar u}_{L^1(X)}^2
		\\
		&\qquad
		+
		\frac{\gamma \, \tau}{3 \, \kappa} \, \norm{u - \bar u}_{L^1(X_\tau)}
		- C \|u - \bar u\|_{L^1(X_\tau)}^{\bar q}
		.
	\end{align*}
	Note that the sum of the terms on the second line
	is non-negative if $\delta$ is small enough,
	since $\bar q > 1$.
\end{proof}

Now we are in the position to prove \cref{T2.2}.
\begin{proof}[Proof of \cref{T2.2}]
	Let $\tau > 0$ and $\kappa' < 2 \, \kappa$ be given,
	such that \eqref{eq305} is satisfied.
	Without loss of generality, we assume that $\kappa' \ge 0$.
	We choose $\gamma \in (0, 2 \, \kappa - \kappa')$.
	We apply \cref{L2.1}
	and get $\delta > 0$ such that
	\eqref{E2.25} holds.
	Now, we choose an arbitrary $u \in \uad \cap B^1_\delta(\bar u)$.
	Using a Taylor expansion,
	we get
	\begin{equation*}
		J(u) - J(\bar u)
		=
		J'(\bar u) (u - \bar u)
		+
		\frac12 J''(u_\theta) (u - \bar u)^2
	\end{equation*}
	for $u_\theta = \bar u + \theta (u - \bar u)$ and $0 \le \theta \le 1$.
	Now, we apply \eqref{E2.25}
	and \cref{T2.1} to conclude
	\begin{align*}
		J(u) - J(\bar u)
		&=
		\frac12J'(\bar u) (u - \bar u)
		+
		\frac12J'(\bar u) (u - \bar u)
		+
		\frac12 J''(u_\theta) (u - \bar u)^2
		\\
		&\ge
		\frac\kappa2 \, \norm{u - \bar u}_{L^1(X)}^2
		+
		\frac12 (\kappa - \kappa' - \gamma) \, \norm{u - \bar u}_{L^1(X)}^2
		\\
		&\ge
		\frac12 (2 \, \kappa - \kappa' - \gamma) \, \norm{u - \bar u}_{L^1(X)}^2
		.
	\end{align*}
	Since $\nu := (2 \, \kappa - \kappa' - \gamma)/2 > 0$,
	the assertion follows.
\end{proof}

\subsection{Approximation results}
\label{S23}
The rest of this section is dedicated to the numerical approximation of the optimization problem \Pb. To this end we make the following assumptions.
First, we fix an approximation of the underlying set $X$.
\begin{enumerate}[label=\textup{\bfseries(D\arabic*)}]
	\item\label{D1}
		There is a sequence of measurable subsets $X_h \subset X$,
		such that $\eta(X \setminus X_h) \to 0$ as $h \to 0$.
\end{enumerate}
Associated with the approximation $X_h$ of $X$, we define the following two notions of convergence.
For a sequence $u_h \in L^1(X_h)$ and $u \in L^1(X)$,
we say that $u_h \to u$ in $L^1(X)$ if and only if $\norm{u_h - u}_{L^1(X_h)} \to 0$ as $h \to 0$.
Similarly, for a sequence $u_h \in L^\infty(X_h)$ and $u \in L^\infty(X)$,
we say that $u_h \weaklystar u$ in $L^\infty(X)$ if and only if
$\int_{X_h} v \, u_h \, \deta \to \int_X v \, u \, \deta$ as $h \to 0$ for all $v\in L^1(X)$.
Due to $\eta(X_h \setminus X) \to 0$, both notions of convergence are equivalent to
$(u_h + f\,\chi_{X\setminus X_h}) \to u$ in $L^1(X)$
and
$(u_h + f\,\chi_{X\setminus X_h}) \weaklystar u$ in $L^\infty(X)$, respectively,
where $f \in L^\infty(X)$ is an arbitrary, but fixed extension of $u_h$.

Next, we state assumptions to define the approximation of our problem \Pb.
\begin{enumerate}[label=\textup{\bfseries(D\arabic*)},resume]
	\item\label{D1p5} The sets $\uadh \subset L^\infty(X_h)$ are closed, convex and contained in
		the set $\{u_h \in L^\infty(X_h) : \alpha \le u_h \le \beta \text{ a.e.\ in } X_h\}$.
		Moreover, for every $u \in \uad$ there exists a sequence $u_h \in \uadh$ such that $u_h \to u$ in $L^1(X)$ as $h \to 0$.
	\item\label{D2} $\{J_h\}_h$ is a sequence of functions $J_h : \uadh \longrightarrow \R$ that are weakly lower semicontinuous with respect to the $L^2(X)$ topology.

	\item\label{D3} The following properties hold for sequences $u_h \in \uadh$ and $u \in \uad$
\begin{align}
&\text{If } u_h \stackrel{*}{\rightharpoonup} u \ \text{ in } \ L^\infty(X),\ \text{ then } \ J(u) \le \liminf_{h \to 0}J_h(u_h),\label{D3.1}\\
&\text{If } u_h \to u \ \text{ in } \ L^1(X),\ \text{ then } \ J(u) = \lim_{h \to 0}J_h(u_h).\label{D3.2}
\end{align}

	\item\label{D4} The functions $J_h$ have $C^1$ extensions $J_h : \mathcal{A}_h \longrightarrow \R$,
		where $\AA_h \subset L^\infty(X_h)$ is a neighborhood of $\uadh$.
		Moreover, for all $u_h \in \uadh$ and for all $u \in \uad$, $J_h'(u_h)$ and $J'(u)$ are linear and continuous forms on $L^1(X_h)$ and $L^1(X)$, respectively.
		Hence, there exist elements $\psi_h \in L^\infty(X_h)$, $\psi \in L^\infty(X)$
		such that the following identifications hold: $J_h'(u_h) = \psi_h$ and $J'(u) = \psi$.
\end{enumerate}

Now, we define the approximating problems
\begin{equation*}
	\label{eq:probh}
	\tag{\textup{P}$_h$}
	\begin{aligned}
		&\text{Minimize }\  J_h(u_h) \\
		&\text{subject to }\  u_h \in \uadh.
	\end{aligned}
\end{equation*}

First, we state a lemma which provides a partial converse to \ref{D1p5}.
\begin{lemma}
	\label{lem:weak_uadh}
	Let us assume that \ref{D1} and \ref{D1p5} hold.
	Let $u_h \subset \uadh$ be a sequence with $u_h \weaklystar u$ in $L^\infty(X)$
	for some $u \in L^\infty(X)$.
	Then, $u \in \uad$ holds.
	If, additionally, $\norm{u_h - \bar u}_{L^1(X_h)} \le \delta$
	for some $\bar u \in \uad$ and some $\delta > 0$, and for all $h > 0$,
	we get $\norm{u - \bar u}_{L^1(X)} \le \delta$.
\end{lemma}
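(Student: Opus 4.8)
The plan is to transfer both assertions to statements about suitable extensions of the functions $u_h$ to all of $X$, and then to invoke the weak closedness of convex closed subsets of $L^1(X)$. Throughout, I use the defining property of the convergence $u_h \weaklystar u$, namely $\int_{X_h} v\,u_h\deta \to \int_X v\,u\deta$ for every $v \in L^1(X)$, together with $\eta(X\setminus X_h)\to 0$ from \ref{D1}. The main device is the following observation: for any fixed $g \in L^\infty(X)$, the extension $\tilde u_h^{g} := u_h\,\chi_{X_h} + g\,\chi_{X\setminus X_h} \in L^\infty(X)$ satisfies $\tilde u_h^{g} \weaklystar u$ in $L^\infty(X)$. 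Indeed, for $v \in L^1(X)$ one splits $\int_X \tilde u_h^{g} v\deta = \int_{X_h} u_h v\deta + \int_{X\setminus X_h} g\,v\deta$; the first term converges to $\int_X u v\deta$ by hypothesis, while $\bigabs{\int_{X\setminus X_h} g\,v\deta} \le \norm{g}_{L^\infty(X)} \int_{X\setminus X_h}\abs{v}\deta \to 0$ by absolute continuity of the integral and $\eta(X\setminus X_h)\to 0$. Since $\eta(X) < \infty$ forces $L^\infty(X) \subset L^1(X)$, weak* convergence in $L^\infty(X)$ entails $\tilde u_h^{g} \weakly u$ in $L^1(X)$ as well.

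For the first claim, take $g\equiv\alpha$. By \ref{D1p5} we have $\alpha \le u_h \le \beta$ a.e.\ on $X_h$, so $\alpha \le \tilde u_h^{\alpha} \le \beta$ a.e.\ on $X$, i.e.\ $\tilde u_h^{\alpha} \in \uad$ for every $h$. Since $\uad$ is convex and closed, hence weakly closed, in $L^1(X)$, and $\tilde u_h^{\alpha} \weakly u$ in $L^1(X)$, we conclude $u \in \uad$.

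For the second claim, take $g = \bar u$. Then $\norm{\tilde u_h^{\bar u} - \bar u}_{L^1(X)} = \norm{u_h - \bar u}_{L^1(X_h)} \le \delta$, so $\tilde u_h^{\bar u}$ belongs to the closed ball $\overline{B^1_\delta(\bar u)}$ in $L^1(X)$, which is convex and closed, hence weakly closed. As $\tilde u_h^{\bar u} \weakly u$ in $L^1(X)$, it follows that $u \in \overline{B^1_\delta(\bar u)}$, that is, $\norm{u - \bar u}_{L^1(X)} \le \delta$.

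I do not anticipate any real obstacle: the argument is elementary, and the only points that need a little care are the bookkeeping between functions living on $X_h$ and on $X$, and the two limit passages, for which $\eta(X\setminus X_h)\to 0$ (giving $\int_{X\setminus X_h}\abs{v}\deta\to 0$ for each fixed $v \in L^1(X)$, by absolute continuity of the integral) is precisely what is required. As an alternative to the Mazur-type step in the second claim, one may instead use that $\norm{w}_{L^1(X)} = \sup\{\int_X w\,g\deta : g\in L^\infty(X),\ \abs{g}\le 1\}$ and that each such $g$ lies in $L^1(X)$, so that $\norm{\cdot}_{L^1(X)}$ is sequentially weak*-lower-semicontinuous on $L^\infty(X)$; applying this to $\tilde u_h^{\bar u} - \bar u \weaklystar u - \bar u$ gives $\norm{u-\bar u}_{L^1(X)} \le \liminf_{h\to 0}\norm{\tilde u_h^{\bar u} - \bar u}_{L^1(X)} \le \delta$.
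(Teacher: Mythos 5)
Your proof is correct. The second assertion is handled exactly as in the paper: extend $u_h$ by $\bar u$ on $X \setminus X_h$, pass to weak convergence in $L^1(X)$, and use weak lower semicontinuity of the $L^1$-norm (equivalently, weak closedness of the closed ball). For the first assertion, however, you take a genuinely different route. The paper argues by contradiction: if $u \le \beta$ fails, there are $B$ with $\eta(B)>0$ and $\varepsilon>0$ with $u \ge \beta+\varepsilon$ on $B$, and testing the convergence against $\chi_B \in L^1(X)$ yields $\int_{B \cap X_h}(u_h-u)\deta \le -\tfrac12\eta(B)\varepsilon$ for small $h$, contradicting $u_h \weaklystar u$. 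You instead extend $u_h$ by the constant $\alpha$, observe that the extensions lie in $\uad$, and invoke Mazur's theorem (a convex norm-closed subset of $L^1(X)$ is weakly closed) together with the implication from weak* convergence in $L^\infty(X)$ to weak convergence in $L^1(X)$ on a finite measure space. Both arguments are sound; yours is more abstract and uniform (the same weak-closedness principle dispatches both assertions at once), while the paper's is self-contained and makes the role of the specific test function explicit. Your preliminary observation that $u_h\chi_{X_h} + g\chi_{X\setminus X_h} \weaklystar u$ for any fixed $g \in L^\infty(X)$ is in fact already recorded in the paper, in the discussion following \ref{D1}, so that step needs no separate justification.
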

\begin{proof}
	We argue by contradiction.
	Assume that $u \le \beta$ is not satisfied a.e.\ on $X$.
	Then, there is a measurable set $B \subset X$ with $\eta(B) > 0$
	and $\varepsilon > 0$ such that
	$u \ge \beta + \varepsilon$ a.e.\ in $B$.
	If $h$ is small enough, we have $\eta(X \setminus X_h) < \eta(B)/2$, hence $\eta(B \cap X_h) > \eta(B)/2$.
	Together with $u_h \le \beta$, this implies
	\begin{equation*}
		\int_{X_h} \chi_B \, (u_h - u) \, \deta
		=
		\int_{B \cap X_h} (u_h - u) \, \deta
		\le
		\int_{B \cap X_h} [\beta - (\beta + \varepsilon)] \, \deta
		\le
		-\frac12 \, \eta(B) \, \varepsilon,
	\end{equation*}
	which contradicts $u_h \weaklystar u$ in $L^\infty(X)$.
	Similar arguments can be used if $u \ge \alpha$ is violated.

	It remains to check the second assertion.
	By extending $u_h$ with $\bar u$ on $X \setminus X_h$,
	we get $u_h \weaklystar u$ in $L^\infty(X)$,
	in particular, $u_h \weakly u$ in $L^1(X)$.
	Now, the assertion follows from the weak lower semicontinuity of the norm of $L^1(X)$.
\end{proof}

The following theorem proves that \Pbh realizes a convergent approximation of \Pb.
\begin{theorem}\label{T2.3}
Let us assume that \ref{D1}--\ref{D3} hold. Then for every $h$, the problem \Pbh has at least a global solution $\bar u_h$.
Furthermore, if $\{\bar u_h\}_h$ is a sequence of global solutions of \Pbh, and
$\bar u_h \weaklystar \tilde u$ in $L^\infty(X)$
then $\tilde u$ is a global solution of \Pb.
Conversely, if $\bar u$ is a bang-bang strict local minimum of \Pb in the $L^1(X)$ sense, then there exists a sequence $\{\bar u_h\}_h$ of
local minimizers of problems \Pbh in the sense of $L^1(X_h)$ such that $\bar u_h \to \bar u$ in $L^1(X)$.
\end{theorem}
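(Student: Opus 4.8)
The plan is to prove the three assertions separately, relying on \cref{lem:weak_uadh} and the stability properties \eqref{D3.1}--\eqref{D3.2}. \emph{Existence} for \Pbh follows from the direct method: $\uadh$ is nonempty by \ref{D1p5} (it captures, e.g., the constant control $\alpha\in\uad$), closed, convex and bounded in $L^\infty(X_h)$, hence a weakly sequentially compact subset of $L^2(X_h)$, and $J_h$ is weakly lower semicontinuous on $L^2(X_h)$ by \ref{D2}. For the \emph{convergence of global solutions}, let $\bar u_h$ solve \Pbh with $\bar u_h\weaklystar\tilde u$ in $L^\infty(X)$; by \cref{lem:weak_uadh}, $\tilde u\in\uad$. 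Given any $u\in\uad$, I would use \ref{D1p5} to pick $u_h\in\uadh$ with $u_h\to u$ in $L^1(X)$, so that global optimality yields $J_h(\bar u_h)\le J_h(u_h)$, and passing to the limit using \eqref{D3.1} on the left and \eqref{D3.2} on the right gives
\begin{equation*}
	J(\tilde u)\le\liminf_{h\to0}J_h(\bar u_h)\le\liminf_{h\to0}J_h(u_h)=J(u);
\end{equation*}
since $u$ is arbitrary, $\tilde u$ is a global solution of \Pb.

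For the \emph{converse}, fix $\rho>0$ such that $\bar u$ is the unique minimizer of $J$ over $\{u\in\uad:\norm{u-\bar u}_{L^1(X)}\le\rho\}$ (shrinking the radius from the definition of strict local minimum if necessary), and consider the auxiliary problems of minimizing $J_h(u_h)$ over $u_h\in\uadh$ subject to $\norm{u_h-\bar u}_{L^1(X_h)}\le\rho$. By \ref{D1p5} there is a sequence $u_h^0\in\uadh$ with $u_h^0\to\bar u$ in $L^1(X)$, so these problems are feasible for $h$ small; their feasible sets are the intersection of the weakly compact set $\uadh$ with a closed and convex (hence weakly closed) $L^1(X_h)$-ball, so by \ref{D2} each admits a global minimizer $\bar u_h$. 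Extracting a subsequence with $\bar u_h\weaklystar\tilde u$ in $L^\infty(X)$ (after extending by $\bar u$ on $X\setminus X_h$ and using weak sequential compactness of bounded sets in $L^2(X)$), \cref{lem:weak_uadh} gives $\tilde u\in\uad$ with $\norm{\tilde u-\bar u}_{L^1(X)}\le\rho$, and comparing $J_h(\bar u_h)\le J_h(u_h^0)$ and passing to the limit via \eqref{D3.1} and \eqref{D3.2} gives $J(\tilde u)\le J(\bar u)$. The strict optimality of $\bar u$ forces $\tilde u=\bar u$; as every subsequence has a further subsequence with this limit, $\bar u_h\weaklystar\bar u$ in $L^\infty(X)$ for the whole sequence.

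The decisive step --- and the only one genuinely using the bang-bang hypothesis --- is to upgrade this weak* convergence to strong convergence in $L^1(X)$. Since $\bar u$ is bang-bang, the measurable function $\sigma$ equal to $+1$ on $\{\bar u=\alpha\}$ and $-1$ on $\{\bar u=\beta\}$ satisfies $\abs{u_h-\bar u}=\sigma\,(u_h-\bar u)$ a.e.\ on $X_h$ whenever $\alpha\le u_h\le\beta$, so
\begin{equation*}
	\norm{\bar u_h-\bar u}_{L^1(X_h)}=\int_{X_h}\sigma\,\bar u_h\deta-\int_{X_h}\sigma\,\bar u\deta\longrightarrow0,
\end{equation*}
using $\sigma\in L^1(X)$, $\bar u_h\weaklystar\bar u$ in $L^\infty(X)$, and $\eta(X\setminus X_h)\to0$. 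Hence, for $h$ small the ball constraint $\norm{u_h-\bar u}_{L^1(X_h)}\le\rho$ is inactive at $\bar u_h$, so every $u_h\in\uadh$ sufficiently close to $\bar u_h$ in $L^1(X_h)$ is still feasible for the auxiliary problem; optimality of $\bar u_h$ there then gives $J_h(\bar u_h)\le J_h(u_h)$, i.e.\ $\bar u_h$ is a local minimizer of \Pbh in the $L^1(X_h)$ sense, and together with $\bar u_h\to\bar u$ in $L^1(X)$ this produces the claimed sequence. I expect this weak*-to-strong passage to be the main obstacle: without the bang-bang structure one would only obtain $\bar u_h\weaklystar\bar u$, which does not localize the discrete minimizers and hence does not make them local solutions of \Pbh.
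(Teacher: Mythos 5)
Your proposal is correct and follows essentially the same route as the paper: direct method for existence, the $\liminf$/$\limsup$ sandwich with \ref{D1p5} and \ref{D3} for convergence of global solutions, and the auxiliary ball-constrained problems \Pbdh together with \cref{lem:weak_uadh} and the bang-bang structure to upgrade weak* to strong $L^1$ convergence and deactivate the ball constraint. Your function $\sigma=\pm1$ on $\{\bar u=\alpha\}$ and $\{\bar u=\beta\}$ is just a compact rewriting of the paper's splitting of $\norm{\bar u_h-\bar u}_{L^1(X_h)}$ into the two integrals over those sets.
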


\begin{proof}
The existence of a global solution $\bar u_h$ of \Pbh follows from the
boundedness, convexity and closedness of $\uadh$ and the weak lower
semicontinuity of $J_h$; see assumptions \ref{D1p5} and \ref{D2}. Now, consider a
subsequence, denoted in the same way, such that $\bar u_h
\stackrel{*}{\rightharpoonup} \tilde u$ in $L^\infty(X)$. Since $\bar u_h \in \uadh$ for every $h$,
the inclusion $\tilde u \in \uad$ holds by \cref{lem:weak_uadh}. Furthermore, given an
element $u \in \uad$, according to assumption \ref{D1p5} we can take a sequence
$\{u_h\}_h$ with $u_h \in \uadh$ such that $u_h \to u$ in $L^1(X)$. Then, using
\ref{D3} and the global optimality of every $\bar u_h$, we infer
\[
J(\tilde u) \le \liminf_{h \to 0}J_h(\bar u_h) \le \limsup_{h \to 0}J_h(\bar u_h) \le \limsup_{h \to 0}J_h(u_h) = J(u).
\]
Hence, $\tilde u$ is a solution of \Pb.

Conversely, we assume that $\bar u$ is a bang-bang strict local minimum of \Pb. Then, there exists $\delta > 0$ such that
\[
J(\bar u) < J(u)\quad \forall u \in \uad \cap B^1_\delta(\bar u)\ \text{ with }\ \bar u \neq u.
\]
Then, we consider the problems
\begin{equation*}
	\label{eq:probdh}
	\tag{\textup{P}$_{\delta,h}$}
	\begin{aligned}&\text{Minimize }\  J_h(u_h) \\
		&\text{subject to }\  u_h \in \uadh \text{ and } \norm{u_h - \bar u}_{L^1(X_h)} \le \delta.
	\end{aligned}
\end{equation*}
From \ref{D1p5} we deduce the existence of a sequence $\{u_h\}_h$ with $u_h \in
\uadh$ such that $u_h\to \bar u$ strongly in $L^1(X)$. Hence, for every $h$
small enough we have that $u_h \in \uadh \cap B^1_\delta(\bar u)$. Therefore
the feasible set of \Pbdh is not empty for every $h$ small enough, and
arguing as before we have that \Pbdh has a solution $\bar u_h$ for every $h$
small enough.
Moreover, the sequence $\{\bar u_h\}$ is bounded in $L^\infty(X)$. Thus,
there exists a weak* converging subsequence.
Additionally, for any subsequence converging to $\tilde u$ in
$L^\infty(X)$ weak*, we get that $\tilde u \in \uad \cap
B^1_\delta(\bar u)$ by \cref{lem:weak_uadh}, and as above $J(\tilde u) \le J(\bar u)$.
The strict local
optimality of $\bar u$ in $\uad \cap B^1_\delta(\bar u)$ implies that $\tilde u
= \bar u$. Moreover, we conclude that the whole sequence $\{\bar u_h\}_h$
converges to $\bar u$ in $L^\infty(X)$ weak*. In addition, by using the
bang-bang property of $\bar u$, we get
\[
\|\bar u_h - \bar u\|_{L^1(X_h)} = \int_{\{x \in X_h : \bar u(x) = \alpha\}}(\bar u_h - \bar u)\, \deta + \int_{\{x \in X_h : \bar u(x) = \beta\}}(\bar u - \bar u_h)\, \deta \to 0 \text{ as } h \to 0.
\]
From here we get that $\|\bar u_h - \bar u\|_{L^1(X_h)} < \delta$ for all $h$ small enough. Hence, $\bar u_h$ is a local minimum of \Pbh for every small $h$.
\end{proof}

We finish this section by proving an estimate of $\bar u_h - \bar u$ in terms of the order of the approximations of $\bar u$ by elements of $\uadh$ and $J'$ by $J'_h$.

\begin{theorem}\label{T2.4}
Let us assume that \ref{H1}--\ref{H5} and \ref{D1}--\ref{D4} hold. Additionally, we suppose that $\bar u$ satisfies the second-order condition \eqref{eq305} with $\kappa' \in (0, \kappa)$. Let $\{\bar u_h\}_h$ be a sequence of local solutions of problems \Pbh converging to $\bar u$ in $L^1(X)
$. Then, for $\gamma = (\kappa - \kappa')/2$
we obtain that the estimate
\begin{align}
\|\bar u_h - \bar u\|^2_{L^1(X_h)} &\le \frac{\gamma + 1}{\gamma^2}\|J'_h(\bar u_h) - J'(\hatbar u_h)\|_{L^\infty(X_h)}^2\notag\\
&\qquad+ \frac{1}{\gamma}\inf_{u_h \in \uadh}\Big(\|u_h - \bar u\|^2_{L^1(X_h)} + 2J'(\hatbar u_h)(\hat u_h - \bar u) \Big)
\label{E2.30}
\end{align}
holds for all $h$ small enough, where $\hatbar u_h$ and $\hat u_h$ denote the extensions of $\bar u_h$ and $u_h$ by $\bar u$ to $X$, respectively.
\end{theorem}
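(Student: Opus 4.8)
The plan is to exploit the quadratic-growth-type inequality available through \cref{L2.1} together with the first-order optimality of $\bar u_h$ for the discrete problem. First I would fix $\gamma = (\kappa-\kappa')/2$, which lies in $(0,3\kappa)$ since $\kappa'\in(0,\kappa)$, and apply \cref{L2.1}: there is a $\delta>0$ such that
\begin{equation*}
	J'(\bar u)(u-\bar u) + J''(u_\theta)(u-\bar u)^2 \ge (\kappa-\kappa'-\gamma)\|u-\bar u\|_{L^1(X)}^2 = \gamma\,\|u-\bar u\|_{L^1(X)}^2
\end{equation*}
for all $u\in\uad\cap B^1_\delta(\bar u)$. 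Since $\bar u_h\to\bar u$ in $L^1(X)$, for $h$ small enough the extension $\hatbar u_h$ lies in $\uad\cap B^1_\delta(\bar u)$, so this inequality applies with $u=\hatbar u_h$ and the relevant $u_\theta$ on the segment between $\bar u$ and $\hatbar u_h$.

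The next step is to bound $J'(\bar u)(\hatbar u_h - \bar u)$ and $J''(u_\theta)(\hatbar u_h-\bar u)^2$ from above using the discrete optimality of $\bar u_h$. Using a Taylor expansion of $J_h$ around $\bar u_h$ (or simply convexity-type arguments via \ref{D4}), together with the variational inequality $J_h'(\bar u_h)(u_h - \bar u_h)\ge 0$ valid for all feasible $u_h\in\uadh$ with $\|u_h-\bar u\|_{L^1(X_h)}\le\delta$ — in particular for the recovery sequence $u_h$ from \ref{D1p5} — I would write
\begin{equation*}
	J'(\bar u)(\hatbar u_h-\bar u) \le \big[J'(\hatbar u_h) - J_h'(\bar u_h)\big](\bar u_h - \bar u) + J_h'(\bar u_h)(u_h - \bar u) + \big[J'(\bar u) - J'(\hatbar u_h)\big](\hatbar u_h - \bar u),
\end{equation*}
estimating the first bracket by $\|J_h'(\bar u_h) - J'(\hatbar u_h)\|_{L^\infty(X_h)}\,\|\bar u_h-\bar u\|_{L^1(X_h)}$ (noting $\bar u_h-\bar u$ vanishes off $X_h$ after extension), the middle term by $\|J'(\hatbar u_h)\|\,\|u_h-\bar u\|$ plus the mismatch term $[J_h'(\bar u_h)-J'(\hatbar u_h)](u_h-\bar u)$, and absorbing the third bracket into the $J''$ remainder via \ref{H3}. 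Combining with the lower bound, one arrives at
\begin{equation*}
	\gamma\,\|\bar u_h - \bar u\|_{L^1(X_h)}^2 \le \|J_h'(\bar u_h)-J'(\hatbar u_h)\|_{L^\infty(X_h)}\,\big(\|\bar u_h-\bar u\|_{L^1(X_h)} + \|u_h-\bar u\|_{L^1(X_h)}\big) + J'(\hatbar u_h)(\hat u_h - \bar u) + \tfrac12\|u_h-\bar u\|_{L^1(X_h)}^2 + (\text{small remainders}),
\end{equation*}
after which Young's inequality on the cross term $\|J_h'(\bar u_h)-J'(\hatbar u_h)\|\cdot\|\bar u_h-\bar u\|$ — absorbing a $\tfrac{\gamma}{2}\|\bar u_h-\bar u\|^2$ piece to the left and leaving $\tfrac{1}{2\gamma}\|J_h'(\bar u_h)-J'(\hatbar u_h)\|^2$ — yields the claimed bound, with the $\tfrac{\gamma+1}{\gamma^2}$ and $\tfrac1\gamma$ constants emerging from tracking these Young and absorption constants, and the infimum appearing because $u_h\in\uadh$ was an arbitrary admissible competitor.

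The main obstacle I anticipate is the careful bookkeeping of which norm ($L^1(X)$ versus $L^1(X_h)$, $L^\infty$ pairings) attaches to which term, and in particular justifying that the $J''(u_\theta)(\hatbar u_h-\bar u)^2$ remainder and the $[J'(\bar u)-J'(\hatbar u_h)](\hatbar u_h-\bar u)$ term are both $o(\|\bar u_h-\bar u\|^2)$ — this is exactly where \ref{H3} and the continuity of $J''$ on $L^q$ (via \ref{H2}) are needed, and it requires $h$ small enough that $\hatbar u_h$ is within the $\delta_\varepsilon$-ball. A secondary subtlety is that the discrete variational inequality only holds for competitors satisfying the extra constraint $\|u_h-\bar u\|_{L^1(X_h)}\le\delta$, so one must argue (as in the proof of \cref{T2.3}) that $\bar u_h$ being an \emph{interior} local solution means this constraint is inactive, or equivalently restrict the infimum in \eqref{E2.30} to such $u_h$, which is harmless for $h$ small since the recovery sequence satisfies it. Once these points are handled, the algebra is routine.
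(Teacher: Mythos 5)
Your proposal is essentially the paper's proof: lower-bound $J'(\hatbar u_h)(\hatbar u_h-\bar u)=J'(\bar u)(\hatbar u_h-\bar u)+J''(u_\theta)(\hatbar u_h-\bar u)^2$ by $\gamma\norm{\bar u_h-\bar u}_{L^1(X_h)}^2$ via \cref{L2.1}, upper-bound it using $J_h'(\bar u_h)(u_h-\bar u_h)\ge 0$ after inserting $J'(\hatbar u_h)$, and close with Young's inequality; your constant bookkeeping reproduces $(\gamma+1)/\gamma^2$ and $1/\gamma$ correctly. One correction: the terms $J''(u_\theta)(\hatbar u_h-\bar u)^2$ and $[J'(\bar u)-J'(\hatbar u_h)](\hatbar u_h-\bar u)$ are \emph{not} individually $o(\norm{\bar u_h-\bar u}_{L^1}^2)$ (by \ref{H2} the former is only $O(\norm{\cdot}_{L^1}^{2/q})$); rather, by the mean value theorem the second equals $-J''(u_{\theta'})(\hatbar u_h-\bar u)^2$ for some $\theta'$, so choosing $\theta=\theta'$ they cancel exactly (or their mismatch is controlled by \ref{H3}), which is precisely the rearrangement the paper uses. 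Also, since $\uadh$ is convex and the segment to any competitor enters every $L^1$-neighborhood of $\bar u_h$, the variational inequality holds for all $u_h\in\uadh$, so your worry about restricting the infimum is unnecessary.
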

This specific extension of  the elements $u_h$ is quite convenient for the derivation of the error estimate.
We will also see in \cref{S4} below, that this will not impede the applicability
of our abstract framework to derive discretization error estimates for optimal control problems.
Let us observe that for every $u_h \in \uadh$, its extension $\hat u$ to $X$ by setting $\hat u(x) = \bar u(x)$ in $X \setminus X_h$ belongs to $\uad$, hence $\hat u \in \AA$ as well.

\begin{proof}
Let $u_h \in \uadh$, and denote by $\hat u_h$ its extension to $X$ by $\bar u$.
Since $\bar u_h$ is a local minimum of \Pbh, $J'_h(\bar u_h)(u_h - \bar u_h) \ge 0$.
Due to \ref{D4} this inequality can be written in the form
\begin{equation}
	\label{eq:in_proof_of_abstract_error}
J'(\hatbar u_h)(\hatbar u_h - \bar u)
\le [J'_h(\bar u_h) - J'(\hatbar u_h)](\chi_{X_h}(\hat u_h - \hatbar u_h)) + J'(\hatbar u_h)(\hat u_h - \bar u).
\end{equation}
Note that our choice of extension is crucial for the above rearrangement.
Next, we rewrite the left-hand side,
and by the mean value theorem and by denoting $u_\theta = \bar u + \theta_h(\hatbar u_h - \bar u)$ with $0 \le \theta_h \le 1$, we infer
\begin{align*}
J'(\hatbar u_h)(\hatbar u_h - \bar u)
&=
J'(\bar u)(\hatbar u_h - \bar u) + [J'(\hatbar u_h) - J'(\bar u)](\hatbar u_h - \bar u).
\\
&=
J'(\bar u)(\hatbar u_h - \bar u) + J''(u_\theta)(\hatbar u_h - \bar u)^2
.
\end{align*}
Taking $\gamma = (\kappa - \kappa')/2$ in \cref{L2.1}, we get for $h$ small enough
\[
\gamma\|\bar u_h - \bar u\|^2_{L^1(X_h)}
=
\gamma\|\hatbar u_h - \bar u\|^2_{L^1(X)}
\le
J'(\hatbar u_h)(\hatbar u_h - \bar u).
\]
This estimate is now used in \eqref{eq:in_proof_of_abstract_error}.
After applying Young's inequality we obtain
\begin{align*}
&\gamma\|\bar u_h - \bar u\|^2_{L^1(X_h)} \le \|J'_h(\bar u_h) - J'(\hatbar u_h)\|_{L^\infty(X_h)}\|u_h - \bar u_h\|_{L^1(X_h)} + J'(\hatbar u_h)(\hat u_h - \bar u)\\
&\quad\le \|J'_h(\bar u_h) - J'(\hatbar u_h)\|_{L^\infty(X_h)}\big(\|u_h - \bar u\|_{L^1(X_h)} + \|\bar u - \bar u_h\|_{L^1(X_h)}\big)
\\&\qquad
+ J'(\hatbar u_h)(\hat u_h - \bar u)\\
&\quad\le \big(\frac{1}{2} + \frac{1}{2\gamma}\big)\|J'_h(\bar u_h) - J'(\hatbar u_h)\|_{L^\infty(X_h)}^2
+ \frac{1}{2}\|u_h - \bar u\|^2_{L^1(X_h)} + \frac{\gamma}{2}\|\bar u_h - \bar u\|^2_{L^1(X_h)}
\\&\qquad
+ J'(\hatbar u_h)(\hat u_h - \bar u).
\end{align*}
From this inequality we deduce
\begin{align*}
\|\bar u_h - \bar u\|^2_{L^1(X_h)} &\le \frac{\gamma + 1}{\gamma^2}\|J'_h(\bar u_h) - J'(\hatbar u_h)\|_{L^\infty(X_h)}^2 \\
&\qquad + \frac{1}{\gamma}\|u_h - \bar u\|^2_{L^1(X_h)} + \frac{2}{\gamma}J'(\hatbar u_h)(\hat u_h - \bar u).
\end{align*}
Since $u_h$ is an arbitrary element of $\uadh$, this inequality implies \eqref{E2.30}.
\end{proof}

In \cref{S4} we will provide precise estimates for the right hand side of \eqref{E2.30} for some distributed optimal control problems, including bilinear controls.

\section{Second-order analysis for bilinear control problems}
\label{S3}

In this section, we apply the second-order analysis results proved in the abstract framework in \cref{S2} to the study of some optimal control problems. The first part of this section will be devoted to the analysis of a bilinear distributed control problem associated with a semilinear elliptic equation. In the second part, we will consider a bilinear Neumann control problem.

In what follows, $\Omega$ denotes a bounded open subset of $\R^n$, $1 \le n \le 3$, with a Lipschitz boundary $\Gamma$. In $\Omega$ we consider the elliptic partial differential operator
\begin{equation}
Ay = -\sum_{i, j = 1}^n\partial_{x_j}[a_{ij}\partial_{x_i}y] + a_0y,
\label{E3.0}
\end{equation}
where $a_{ij}, a_0 \in L^\infty(\Omega)$ and $a_0 \ge 0$ in $\Omega$. Associated with this operator we define the usual bilinear form $a:H^1(\Omega) \times H^1(\Omega) \longrightarrow \R$
\begin{equation}
a(y,z) = \int_\Omega\Big(\sum_{i, j= 1}^na_{ij}(x)\partial_{x_i}y(x)\partial_{x_j}z(x) + a_0(x)y(x)z(x)\Big)\, \dx.
\label{E3.a}
\end{equation}
Let $\Gamma_D$ be a closed subset of $\Gamma$, possibly empty, and set $\Gamma_N = \Gamma \setminus \Gamma_D$. We define the space
\[
V = \{y \in H^1(\Omega) : y = 0 \text{ on } \Gamma_D\}.
\]
equipped with the usual norm of $H^1(\Omega)$
and the operator $L:V \longrightarrow V^*$ via
\[
\langle Ly,z \rangle = a(y,z)\quad \forall y, z \in V,
\]
and we assume its coercivity.
\begin{enumerate}[label=\textup{\bfseries(A\arabic*)},wide=0pt]
\item\label{asm:coercivity}
We have that
\begin{equation}
\exists\Lambda > 0 \text{ such that } \Lambda\|y\|^2_V  \le a(y,y)\quad \forall y \in V.
\label{E3.1}
\end{equation}
\end{enumerate}

Moreover, we consider a Carath\'eodory function $b:\Omega \times \R \longrightarrow \R$ of class $C^2$ with respect to the second variable, such that the following assumptions are satisfied.

\begin{enumerate}[label=\textup{\bfseries(A\arabic*)},wide=0pt,resume]
	\item\label{asm:1}
		We assume that $b(\cdot,0) = 0$,
		\[
			\frac{\partial b}{\partial y}(x,y) \geq 0 \quad  \mbox{ for a.a. } x \in \Omega \text{ and for all } y \in \R,
		\]
		and that for all $M>0$ there exists a constant $C_{b,M}>0$ such that
		the boundedness estimate
		\[
			\left|\frac{\partial b}{\partial y}(x,y)\right|+
			\left|\frac{\partial^2 b}{\partial y^2}(x,y)\right| \leq C_{b,M}
			\mbox{ for a.e. } x \in \Omega \mbox{ and for all } |y| \leq M,
		\]
		and that for all $\varepsilon > 0$ and $M > 0$ there exists $\rho_{\varepsilon,M} > 0$ such that for a.e.~$x \in \Omega$
		\[
			\left|\frac{\partial^2b}{\partial y^2}(x,y_2) -
			\frac{\partial^2b}{\partial y^2}(x,y_1)\right| < \varepsilon
		 \mbox{ and for all } \abs{y_1}, \abs{y_2} \leq M \text{ with } |y_2 - y_1| < \rho_{\varepsilon,M}
		\]
		are satisfied. In what follows we use the notation
		\[
		b' = \frac{\partial b}{\partial y} \ \text{ and }\ b'' = \frac{\partial^2b}{\partial y^2}.
		\]
\end{enumerate}

\subsection{A bilinear distributed control problem}
\label{S31}

In this section, we consider the following state equation
\begin{equation}
L y + b(\cdot,y) + \chi_\omega u y = f \ \text{ in } V^*,
\label{E3.2}
\end{equation}
where $\omega$ is an open subset of $\Omega$, and $u$ and $f$ satisfy the following assumptions.
\begin{enumerate}[label=\textup{\bfseries(A\arabic*)},wide=0pt,resume]
	\item\label{asm:3}
	We fix $\bar p > n$ and $\bar p' = \bar p/(\bar p - 1)$ is its conjugate. We assume that $f \in W^{1,\bar p'}(\Omega)^*$.
	\item\label{asm:4}
	We assume that $u \in \mathcal{A}$, where the open set $\mathcal{A} \subset L^\infty(\omega)$ is given by
	\[
\mathcal{A} = \{v \in L^\infty(\omega) : \exists\varepsilon_v > 0 \text{ such that } v(x) > -\frac{\Lambda	}{2} + \varepsilon_v\ \text{ for a.a. } x \in \omega\},
\]
where $\Lambda$ was introduced in \ref{asm:coercivity}.
\end{enumerate}
In the next theorem, we analyze the equation \eqref{E3.2}.

\begin{theorem}
	\label{T3.1}
	The following statements hold.
	\begin{enumerate}[label=(\arabic*)]
		\item For any $u \in \AA$
			there exists a unique solution $y_u \in Y := V \cap L^\infty(\Omega)$ of the state equation \eqref{E3.2}.
			Moreover, there exists a constant $C$ such that
			\begin{equation}
			\|y_u\|_{Y} =	\|y_u\|_{L^\infty(\Omega)} + \|y_u\|_{V} \le C \ \ \forall u \in \AA.
				\label{E3.3}
			\end{equation}
		\item
			The control-to-state mapping $G:\AA \longrightarrow Y$
			defined by $G(u) = y_u$ is of class $C^2$.
			Moreover, for $v \in L^\infty(\Omega)$, $z_v = G'(u)\,v$ is the unique solution of
			\begin{equation}
				L \, z_v + b'(\cdot, y_u) \, z_v + \chi_\omega u \, z_v + y_u \,\chi_\omega v = 0
				\label{E3.4}, 
			\end{equation}
			and given $v_1, v_2 \in L^2(\Omega)$, $w_{v_1,v_2} =
			G''(u)(v_1,v_2)$ is the unique solution of
			\begin{equation}
				\begin{aligned}
				&L \, w_{v_1,v_2}
				+ b'(\cdot, y_u) \, w_{v_1,v_2}
				+ \chi_\omega u \, w_{v_1,v_2}
				\\
				&\qquad\qquad
				+ b''(\cdot, y_u) \, z_{v_1} \, z_{v_2}
				+ \chi_\omega v_1 \, z_{v_2}
				+ \chi_\omega v_2 \, z_{v_1}
				= 0
				\end{aligned}
				\label{E3.5} 
			\end{equation}
			where $z_{v_i} = G'(u)\,v_i$, $i = 1, 2$.
	\end{enumerate}
\end{theorem}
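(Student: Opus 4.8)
The plan is to treat both statements by the standard machinery for semilinear elliptic equations with $L^\infty$-coefficients, the only new ingredient being the bilinear term $\chi_\omega u y$, which — thanks to $u \in \AA$ — can be absorbed into the coercive part of the operator. For the existence part (1), I would first observe that for $u \in \AA$ the bilinear form $a(y,z) + \int_\omega u\,y\,z\,\dx$ is still coercive on $V$: indeed $\int_\omega u\,y^2\,\dx \ge (-\Lambda/2 + \varepsilon_u)\int_\omega y^2\,\dx \ge -\tfrac12\Lambda\|y\|_{L^2(\Omega)}^2$, so combined with \eqref{E3.1} and the monotonicity $b' \ge 0$ one gets a coercive, monotone operator on $V$. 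Then existence and uniqueness of a solution $y_u \in V$ follows from the theory of monotone operators (Browder–Minty), using that $b(\cdot,0)=0$ together with the local boundedness of $b'$ to give the required growth/continuity; to make $b(\cdot,y)$ well-defined one first truncates $b$ and removes the truncation a posteriori via the $L^\infty$-bound. The $L^\infty$-regularity $y_u \in L^\infty(\Omega)$ and the uniform bound \eqref{E3.3} come from Stampacchia's truncation method (testing with $(y_u - k)^+$), exploiting $f \in W^{1,\bar p'}(\Omega)^*$ with $\bar p > n$, the monotonicity of $b$, and again the coercivity surviving the bilinear term; the constant is uniform over $\AA$ because the coercivity constant is bounded below only in terms of $\Lambda$ (and the $\varepsilon_u$ enters only favourably).

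For the uniform bound it is worth isolating the key estimate: once $\|y_u\|_{L^\infty(\Omega)}$ is controlled, $b'(\cdot,y_u)$ is bounded by $C_{b,M}$ with $M = \|y_u\|_{L^\infty}$, and then testing \eqref{E3.2} with $y_u$ and using coercivity gives $\|y_u\|_V \le C$. A mild subtlety is the order: one needs the $L^\infty$-bound to be obtainable without first knowing the $V$-bound, which is precisely what Stampacchia's method delivers directly from the weak formulation.

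For part (2), the differentiability of $G$, I would set up the map $\mathcal{F}: \AA \times Y \to W^{1,\bar p'}(\Omega)^* $ (or into $V^* \times$ something capturing the $L^\infty$-part), $\mathcal{F}(u,y) = Ly + b(\cdot,y) + \chi_\omega u y - f$, check that $\mathcal{F}$ is $C^2$ — this uses the $C^2$-regularity of $b$ in its second argument together with the quantitative continuity of $b''$ assumed in \ref{asm:1}, applied via Nemytskii-operator differentiability on the space $Y$ where arguments stay in a fixed bounded interval — and verify that $\partial_y \mathcal{F}(u,y_u): Y \to Y^{**}$ is an isomorphism. The latter linearized operator is $z \mapsto Lz + b'(\cdot,y_u)z + \chi_\omega u z$, which is coercive (same argument as above, since $b' \ge 0$) and hence invertible, with the inverse mapping into $Y = V \cap L^\infty$ by the same Stampacchia argument applied to the linear equation. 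The implicit function theorem then yields that $G$ is $C^2$, and identifying $G'(u)v$ by differentiating $\mathcal{F}(u,G(u)) = 0$ in the direction $v$ gives \eqref{E3.4}; differentiating once more and using the chain rule for the Nemytskii operators $z \mapsto b'(\cdot,z)$ and $z \mapsto b''(\cdot,z)$ gives \eqref{E3.5}. The point requiring the most care — and thus the main obstacle — is the differentiability of the Nemytskii operator $y \mapsto b(\cdot,y)$ from $Y$ into the right target space: one must check that the difference quotients converge in the appropriate operator norm, and this is exactly where the uniform modulus-of-continuity hypothesis on $b''$ in \ref{asm:1} is used; everything else is a routine application of the implicit function theorem and the coercivity estimate.
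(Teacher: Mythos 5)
Your proposal follows essentially the same route as the paper: coercivity of $L + \chi_\omega u$ for $u \in \AA$ combined with the monotonicity of $b$ and Stampacchia's truncation for part (1), and the implicit function theorem with a coercive linearized operator (invertible by Lax--Milgram plus Stampacchia) for part (2). The only point where the paper is more careful than your sketch is the choice of spaces for the implicit function theorem: it works on $Y_{\bar p} = \{y \in Y : Ly \in W^{1,\bar p'}(\Omega)^*\}$ equipped with the graph norm rather than on $Y$ itself (your tentative $\partial_y\mathcal{F} : Y \to Y^{**}$ is not the right pairing), since $Ly$ for $y \in V \cap L^\infty(\Omega)$ need not belong to $W^{1,\bar p'}(\Omega)^*$, and this graph space is precisely what makes $\mathcal{L}$ well defined into $W^{1,\bar p'}(\Omega)^*$ and $\partial_y\mathcal{L}(y_u,u)$ an isomorphism onto it.
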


\begin{proof}
For the proof of existence and uniqueness of a solution of \eqref{E3.2} in $Y$, first we observe that the linear operator $L + \chi_\omega u$ is coercive in $V$ for all $u \in \AA$ due to the fact that $u \ge -\frac{\Lambda}{2}$ and assumption \ref{asm:coercivity}. Then, the arguments are standard; see, for instance, \cite[\S 4.1]{Troltzsch2010}. We recall that the boundedness of $y$ needed in this proof is a consequence of Stampacchia's result \cite[Theorem 4.2]{Stampacchia65}. To prove the differentiability of the mapping $G$ we use the implicit function theorem as follows. We define
\[
Y_{\bar p} =\{y \in Y : Ly \in W^{1,\bar p'}(\Omega)^*\},
\]
which is a Banach space when it is endowed with the graph norm. Now, we consider the mapping $\mathcal{L}:Y_{\bar p} \times \AA \longrightarrow W^{1,\bar p'}(\Omega)^*$ given by
\[
\mathcal{L}(y,u) = Ly + b(\cdot,y) + \chi_\omega uy - f.
\]
From assumption \ref{asm:1} we get that $\mathcal{L}$ is of class $C^2$ and
\[
\frac{\partial\mathcal{L}}{\partial y}(y_u,u)z = Lz + b'(\cdot,y_u)z + \chi_\omega uz
\]
defines an isomorphism between $Y_{\bar p}$ and $W^{1,\bar p'}(\Omega)^*$ for all $u \in \AA$. Indeed, it is obvious that $\frac{\partial\mathcal{L}}{\partial y}(y_u,u) : Y_{\bar p} \longrightarrow W^{1,\bar p'}(\Omega)^*$ is a continuous linear mapping. The bijectivity is a consequence of the Lax-Milgram theorem and, once again, \cite[Theorem 4.2]{Stampacchia65}. Hence, a straightforward application of the implicit function theorem implies that $G$ is of class $C^2$ and \eqref{E3.4} and \eqref{E3.5} hold.
\end{proof}

Associated with the state equation \eqref{E3.2} we introduce the following bilinear distributed control problem
\begin{equation*}
	\label{bdp}
	\tag{\textup{BDP}}
	\begin{aligned}
		&\text{Minimize }\ J(u) = \frac{1}{2}\norm{ y_u - y_d }_{L^2(\Omega)}^2\\
		&\text{subject to }\ u \in \uad,
	\end{aligned}
\end{equation*}
where
\[
\uad = \{u \in L^\infty(\omega) : \alpha \le u(x) \le \beta \text{ for a.a. } x \in \omega\}
\]
with $0 \le \alpha < \beta < \infty$. For $y_d$ we assume
\begin{enumerate}[label=\textup{\bfseries(A\arabic*)},wide=0pt,resume]
	\item\label{asm:5}
$y_d \in L^2(\Omega)$ holds.
\end{enumerate}
This problem is included in the abstract framework considered in \cref{S2} by taking $X = \omega$ and $\eta$ equal to the Lebesgue measure.

The next theorem is an immediate consequence of \cref{T3.1} and the chain rule.

\begin{theorem}
	\label{T3.2}
	The reduced objective $J : \AA \to \R$
	is twice Fréchet differentiable
	and the first and second derivatives are given by
	\begin{align}
		\label{eq:first_deriv_obj}
		J'(u) \, v
		&=
		\int_\Omega (y_u - y_d) \, z_v \, \dx
		=
		-\int_{\omega} \varphi_u \, y_u \, v \, \dx,
		\\
		J''(u) (v_1, v_2)
		&=
		\label{eq:second_deriv_obj_no_adj}
		\int_\Omega \big[z_{v_1} \, z_{v_2} + (y_u - y_d) \, w_{v_1,v_2}\big] \, \dx
		\\&
		\label{eq:second_deriv_obj}
		=
		\int_\Omega\big[
		(1 - \varphi_u \, b''(\cdot, y_u)) \, z_{v_1} \, z_{v_2}\big]\,\dx
		- \int_\omega
		\varphi_u \,
		\big(
			v_1 \, z_{v_2}
			+ v_2 \, z_{v_1}
		\big)
		\,\dx
	\end{align}
	where $\varphi_u \in Y$ is the unique solution of
	\begin{equation}
		\label{eq:adjoint_equation}
		L^* \, \varphi_u + b'(\cdot, y_u) \, \varphi_u + \chi_\omega u \, \varphi_u
		=
		y_u - y_d\quad \text{in } V^*,
	\end{equation}
	and $y_u, z_{v_1}, z_{v_2}, w_{v_1,v_2}$ are defined as in \cref{T3.1}.
\end{theorem}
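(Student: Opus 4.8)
The plan is to use the chain rule. Write $J = F \circ G$, where $G : \AA \to Y$ is the control-to-state map, which is of class $C^2$ by \cref{T3.1}, and $F : Y \to \R$ is $F(y) = \frac12\norm{y - y_d}_{L^2(\Omega)}^2$. Since $Y = V \cap L^\infty(\Omega)$ embeds continuously into $L^2(\Omega)$ and $F$ is the restriction to $Y$ of a continuous quadratic functional on $L^2(\Omega)$, it is of class $C^\infty$ with $F'(y)\,z = \int_\Omega (y - y_d)\,z\dx$ and $F''(y)(z_1,z_2) = \int_\Omega z_1 z_2\dx$. Hence $J = F \circ G$ is twice Fréchet differentiable on $\AA$, and the chain rule, combined with the characterizations of $z_v = G'(u)v$ and $w_{v_1,v_2} = G''(u)(v_1,v_2)$ from \cref{T3.1}, yields immediately
\[ J'(u)v = \int_\Omega (y_u - y_d)\,z_v\dx, \qquad J''(u)(v_1,v_2) = \int_\Omega\big[z_{v_1}z_{v_2} + (y_u - y_d)\,w_{v_1,v_2}\big]\dx, \]
that is, the first equality in \eqref{eq:first_deriv_obj} and the identity \eqref{eq:second_deriv_obj_no_adj}.

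It then remains to rewrite these expressions through the adjoint state. First I would check that \eqref{eq:adjoint_equation} has a unique solution $\varphi_u \in Y$. The operator $L^* + b'(\cdot,y_u) + \chi_\omega u$ is coercive on $V$ by exactly the argument used in the proof of \cref{T3.1}: for every $u \in \AA$ one has $u > -\Lambda/2$ a.e., $b'(\cdot,y_u) \ge 0$ by \ref{asm:1}, and $\norm{\cdot}_{L^2(\Omega)} \le \norm{\cdot}_V$, so $a(\varphi,\varphi) + \int_\Omega b'(\cdot,y_u)\varphi^2\dx + \int_\omega u\varphi^2\dx \ge \tfrac{\Lambda}{2}\norm{\varphi}_V^2$ using \ref{asm:coercivity}. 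Existence and uniqueness of $\varphi_u \in V$ then follow from the Lax--Milgram theorem, and the bound $\varphi_u \in L^\infty(\Omega)$ from Stampacchia's theorem \cite[Theorem~4.2]{Stampacchia65}, since the right-hand side $y_u - y_d$ lies in $L^2(\Omega)$ and $2 > n/2$ for $n \le 3$.

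With $\varphi_u$ available, I would test the linearized equation \eqref{E3.4} (in weak form, in $V$) with $\varphi_u$ and the adjoint equation \eqref{eq:adjoint_equation} with $z_v$, and subtract; the $a(\cdot,\cdot)$-term and the $b'(\cdot,y_u)$- and $\chi_\omega u$-terms cancel, leaving $\int_\Omega (y_u - y_d)\,z_v\dx = -\int_\omega \varphi_u\,y_u\,v\dx$, which is the second equality in \eqref{eq:first_deriv_obj}. In the same way, testing the equation \eqref{E3.5} for $w_{v_1,v_2}$ with $\varphi_u$ and \eqref{eq:adjoint_equation} with $w_{v_1,v_2}$ and subtracting gives
\[ \int_\Omega (y_u - y_d)\,w_{v_1,v_2}\dx = -\int_\Omega b''(\cdot,y_u)\,\varphi_u\,z_{v_1}z_{v_2}\dx - \int_\omega \varphi_u\,(v_1 z_{v_2} + v_2 z_{v_1})\dx, \]
and substituting this into \eqref{eq:second_deriv_obj_no_adj} produces \eqref{eq:second_deriv_obj}.

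I do not expect any real obstacle: as announced in the text, the theorem is essentially the chain rule together with two elementary testing/integration-by-parts identities involving the adjoint state. The only point needing a little care is the well-posedness of \eqref{eq:adjoint_equation} in $Y$, specifically the $L^\infty$-regularity of $\varphi_u$; here one uses the restriction $n \le 3$, which guarantees $L^2(\Omega) \embeds W^{1,\bar p'}(\Omega)^*$ (equivalently, that the Stampacchia hypothesis holds with exponent $2$), so that the argument of \cref{T3.1} applies verbatim. Note also that differentiability of $J$ itself is secured purely by the chain-rule step of the first paragraph and does not depend on the adjoint representation.
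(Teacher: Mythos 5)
Your proposal is correct and follows exactly the route the paper intends: the paper gives no written proof beyond the remark that the theorem "is an immediate consequence of \cref{T3.1} and the chain rule," and your argument is precisely the standard fleshing-out of that remark (chain rule for $J=F\circ G$, well-posedness of the adjoint equation via coercivity, Lax--Milgram and Stampacchia, and the two testing identities yielding the adjoint representations). All the individual computations check out.
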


Using \cref{T3.1,T3.2} we infer the next result by standard arguments.

\begin{theorem}\label{T3.3}
\bdp has at least one global solution. Moreover, any local solution $\bar u$ in the sense of $L^p(\omega)$, for some $p \in [1,\infty]$, satisfies
\begin{equation}
\int_\omega\bar\varphi\bar y(u - \bar u)\, \dx \le 0 \quad \forall u \in \uad,
\label{E3.10}
\end{equation}
where $\bar y$ and $\bar\varphi$ are the state and adjoint state, respectively, corresponding to $\bar u$.
\end{theorem}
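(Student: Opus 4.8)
The plan is to derive \eqref{E3.10} as the first-order necessary optimality condition for \bdp, reducing everything to the differentiability structure established in \cref{T3.1,T3.2}. The existence of a global solution follows from the direct method of the calculus of variations: first I would take a minimizing sequence $\{u_k\} \subset \uad$; since $\uad$ is bounded in $L^\infty(\omega)$, after passing to a subsequence we have $u_k \weaklystar \bar u$ in $L^\infty(\omega)$ with $\bar u \in \uad$ (the set $\uad$ being convex and closed in $L^2(\omega)$, hence weakly closed, and weak* convergence implies weak $L^2$ convergence on the bounded set). The key compactness step is to pass to the limit in the state equation: I would show that $y_{u_k} \to y_{\bar u}$ strongly in $L^2(\Omega)$, using the uniform bound \eqref{E3.3}, the compact embedding $V \embeds L^2(\Omega)$, and the weak* convergence of $u_k$ to handle the bilinear term $\chi_\omega u_k y_{u_k}$ (the product of a weak*-converging sequence and a strongly-converging sequence passes to the limit). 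Then lower semicontinuity of $J(u) = \frac12\|y_u - y_d\|_{L^2(\Omega)}^2$ along the minimizing sequence gives $J(\bar u) \le \liminf J(u_k)$, so $\bar u$ is a global minimizer.

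For the optimality condition, let $\bar u$ be any local minimizer in the $L^p(\omega)$ sense for some $p \in [1,\infty]$. The admissible set $\uad$ is convex, so for any $u \in \uad$ and $t \in (0,1]$ we have $\bar u + t(u - \bar u) \in \uad$, and for $t$ small enough this lies in the $L^p$-ball where $\bar u$ is optimal (here I use that $\|t(u-\bar u)\|_{L^p(\omega)} \le t(\beta-\alpha)\eta(\omega)^{1/p} \to 0$ for $p < \infty$, and is $\le t(\beta-\alpha)$ for $p = \infty$). Hence $J(\bar u + t(u-\bar u)) \ge J(\bar u)$, and dividing by $t$ and letting $t \downarrow 0$ yields $J'(\bar u)(u - \bar u) \ge 0$, using the Fréchet differentiability of $J$ from \cref{T3.2} (note $\bar u \in \uad \subset \AA$ since $\alpha \ge 0 > -\Lambda/2$). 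Finally, I would insert the representation \eqref{eq:first_deriv_obj}, namely $J'(\bar u)v = -\int_\omega \bar\varphi\, \bar y\, v\,\dx$ with $\bar y = y_{\bar u}$ and $\bar\varphi = \varphi_{\bar u}$ solving the adjoint equation \eqref{eq:adjoint_equation}, to rewrite $J'(\bar u)(u-\bar u) \ge 0$ as $-\int_\omega \bar\varphi\,\bar y\,(u-\bar u)\,\dx \ge 0$, which is exactly \eqref{E3.10}.

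The only genuine obstacle is the compactness argument in the existence proof, specifically confirming that the nonlinear terms $b(\cdot, y_{u_k})$ and the bilinear term $\chi_\omega u_k y_{u_k}$ pass correctly to the limit. For the bilinear term, strong $L^2(\Omega)$ convergence of $y_{u_k}$ together with weak* $L^\infty(\omega)$ convergence of $u_k$ suffices to identify the weak limit as $\chi_\omega \bar u\, y_{\bar u}$; for $b(\cdot, y_{u_k})$, the uniform $L^\infty(\Omega)$ bound on the states together with the local Lipschitz bound on $b'$ from \ref{asm:1} and (a subsequence with) pointwise a.e.\ convergence of $y_{u_k}$ gives convergence of $b(\cdot,y_{u_k})$ in $L^2(\Omega)$. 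Everything else is routine: the differentiability is already packaged in \cref{T3.1,T3.2}, and the passage from local optimality to the variational inequality is the standard convexity-of-$\uad$ argument.
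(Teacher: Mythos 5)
Your proposal is correct and is precisely the ``standard arguments'' the paper invokes without writing out: the direct method with weak* compactness of $\uad$, strong $L^2$ convergence of the states via \eqref{E3.3} and the compact embedding $V \embeds L^2(\Omega)$ to pass to the limit in the bilinear and semilinear terms, followed by the convexity-of-$\uad$ directional-derivative argument and the gradient representation \eqref{eq:first_deriv_obj} from \cref{T3.2}. No gaps; the details you flag (limit of $\chi_\omega u_k y_{u_k}$ via weak*-times-strong convergence, and of $b(\cdot,y_{u_k})$ via the uniform bound and the Lipschitz estimate from \ref{asm:1}) are exactly the right ones.
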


In the rest of this section, $\bar u$ will denote a fixed element of $\uad$ satisfying \eqref{E3.10}. We are going to apply the results obtained in the abstract framework in \cref{S2}. To this end, we observe that \ref{H1} obviously holds with $X = \omega$ and \ref{H4} is fulfilled with $\bar\psi = -(\bar\varphi\bar y)|_\omega$. Assumption \ref{H5} is formulated in our setting as follows: there exists a constant $K$ such that
\begin{equation}
|\{x \in \omega : |\bar\varphi(x)\bar y(x)| \le \varepsilon\}| \le K\varepsilon \ \ \forall \varepsilon > 0,
\label{E3.11}
\end{equation}
where $|\cdot|$ denotes the Lebesgue measure in $\omega$. Then, \eqref{E2.24} holds.

For the second-order analysis we introduce the cone $C_{\bar u}^\tau$ as in \eqref{eq:critical_cone}.
The rest of this section is devoted to prove that the quadratic growth condition \eqref{E2.9} holds
under the second-order condition \eqref{eq305}.
For that, we apply \cref{T2.2}.
Therefore, we only need to verify that assumptions \ref{H2} and \ref{H3} hold.
The following lemma will be used for this verification.

\begin{lemma}
	\label{L3.1}
	Given $c \in L^\infty(\Omega)$ with $c \ge 0$, we consider the equation
	\begin{equation}
	Ly + cy = f \ \ \text{in } V^*.
	\label{E3.12}
	\end{equation}
Then, the following statements hold
\begin{align}
\|y\|_{L^6(\Omega)} &\le C_L\|f\|_{L^{6/5}(\Omega)}\quad \forall f \in  L^{6/5}(\Omega),\label{E3.13}\\
\forall p > \frac{3}{2}\ \exists C_p > 0 : \|y\|_{L^\infty(\Omega)} &\le \mathrlap{C_p\|f\|_{L^p(\Omega)}}\phantom{C_L\|f\|_{L^{6/5}(\Omega)}}\quad \forall f \in L^p(\Omega),\label{E3.14}\\
\forall p \in [1,3)\  \exists C_p > 0 : \phantom{\|y\|_{L^\infty(\Omega)}}\mathllap{\|y\|_{L^p(\Omega)}} &\le \mathrlap{C_p\|f\|_{L^1(\Omega)}}\phantom{C_L\|f\|_{L^{6/5}(\Omega)}}\quad \forall f \in V^* \cap L^1(\Omega),\label{E3.15}
\end{align}
where $y \in V$ denotes the unique solution of \eqref{E3.12}.
\end{lemma}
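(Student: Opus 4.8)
The plan is to prove the three estimates \eqref{E3.13}, \eqref{E3.14}, \eqref{E3.15} by exploiting the coercivity of $L + c$ (which holds since $c \ge 0$ and \ref{asm:coercivity} gives coercivity of $L$ on $V$) together with Sobolev embeddings valid for $n \le 3$, and then a duality argument for the last estimate. First I would establish \eqref{E3.13}: since $f \in L^{6/5}(\Omega) \embeds V^*$ (because $L^{6/5}(\Omega) \embeds H^1(\Omega)^* $ for $n \le 3$, the conjugate exponent of the Sobolev exponent $6$), the Lax--Milgram theorem applied to the coercive bilinear form $a(y,z) + \int_\Omega c\,y\,z\,\dx$ yields a unique solution $y \in V$ with $\norm{y}_V \le C\,\norm{f}_{V^*} \le C'\,\norm{f}_{L^{6/5}(\Omega)}$, and the Sobolev embedding $V \embeds L^6(\Omega)$ finishes this part.

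Next I would prove \eqref{E3.14}. For $f \in L^p(\Omega)$ with $p > 3/2$, the solution $y \in V$ already exists by the previous step (note $L^p \embeds L^{6/5}$ on a bounded domain when $p \ge 6/5$; the remaining range $3/2 < p < 6/5$ is empty, so this is automatic). The $L^\infty$-bound is precisely Stampacchia's truncation estimate \cite[Theorem~4.2]{Stampacchia65}, already invoked in the proof of \cref{T3.1}: testing the equation with truncations of $y$ and using $c \ge 0$ to drop the zeroth-order term, one gets $\norm{y}_{L^\infty(\Omega)} \le C_p\,\norm{f}_{L^p(\Omega)}$ with $C_p$ depending only on $p > n/2$ and the coercivity constant $\Lambda$. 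I would simply cite this.

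For \eqref{E3.15} the natural tool is duality. Fix $p \in [1,3)$ and let $p'$ be its conjugate, so $p' \in (3/2, \infty]$. Given $g \in L^{p'}(\Omega)$, let $\phi \in V$ solve the adjoint problem $L^* \phi + c\,\phi = g$; by \eqref{E3.14} (applicable since $p' > 3/2$, and for $p = 1$, $p' = \infty$ one uses any finite exponent above $3/2$) we have $\norm{\phi}_{L^\infty(\Omega)} \le C_{p'}\,\norm{g}_{L^{p'}(\Omega)}$. Then, for $f \in V^* \cap L^1(\Omega)$ and $y \in V$ the solution of \eqref{E3.12},
\begin{equation*}
	\int_\Omega y\,g\,\dx = a(y,\phi) + \int_\Omega c\,y\,\phi\,\dx = \dual{f}{\phi} = \int_\Omega f\,\phi\,\dx \le \norm{f}_{L^1(\Omega)}\,\norm{\phi}_{L^\infty(\Omega)} \le C_{p'}\,\norm{f}_{L^1(\Omega)}\,\norm{g}_{L^{p'}(\Omega)}.
\end{equation*}
Taking the supremum over $g$ in the unit ball of $L^{p'}(\Omega)$ gives $\norm{y}_{L^p(\Omega)} \le C_p\,\norm{f}_{L^1(\Omega)}$. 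The main subtlety to watch is the endpoint $p = 1$: there $p' = \infty$ and $L^\infty(\Omega)$ is not separable, so I would instead argue with $p$ slightly larger than $1$ (any $p \in (1,3)$ suffices, and $p = 1$ follows a posteriori from the bounded domain inclusion $L^p \embeds L^1$), or equivalently use that $L^1(\Omega)^*$ identification is not needed — only the upper bound $\int_\Omega y\,g \le C\norm{g}_{L^{p'}}$ for all $g \in L^{p'}$, which for $p>1$ characterizes the $L^p$-norm by reflexivity. This endpoint bookkeeping, rather than any deep estimate, is the only place requiring care.
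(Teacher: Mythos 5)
Your proposal is correct and follows essentially the same route as the paper: the embedding $L^{6/5}(\Omega)\embeds V^*$ plus $V\embeds L^6(\Omega)$ for \eqref{E3.13}, a citation of Stampacchia for \eqref{E3.14}, and a transposition/duality argument via the adjoint equation with $\norm{z}_{L^\infty(\Omega)}\le C_{p'}\norm{g}_{L^{p'}(\Omega)}$ for \eqref{E3.15}. Your extra bookkeeping at the endpoint $p=1$ is harmless but unnecessary (the identity $\norm{y}_{L^1(\Omega)}=\sup_{\norm{g}_{L^\infty}\le 1}\int_\Omega y\,g\,\dx$ needs no separability of $L^\infty(\Omega)$), and the paper simply runs the same duality argument uniformly for all $p\in[1,3)$.
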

\begin{proof}
Inequality \eqref{E3.13} is an immediate consequence of the continuous embeddings $V \subset L^6(\Omega)$ and $L^{6/5}(\Omega) \subset V^*$ for $n \le 3$. Inequality \eqref{E3.14} is proved in \cite[Theorem 4.2]{Stampacchia65}. 	We argue by transposition to prove \eqref{E3.15}.
	For an arbitrary $g \in L^{p'}(\Omega)$ with $\frac{1}{p} + \frac{1}{p'} = 1$, we denote by $z \in V$ the solution of the adjoint equation
	\begin{equation*}
		L^* \, z + c \, z  = g\ \ \text{in } V^*.
	\end{equation*}
Since $p' > \frac{3}{2}$, we can apply again \eqref{E3.14} to the adjoint equation and obtain
	\begin{equation*}
		\norm{z}_{L^\infty(\Omega)}
		\le
		C_{p'} \, \norm{g}_{L^{p'}(\Omega)}.
	\end{equation*}
	Now, we have
	\begin{align*}
		\int_\Omega y \, g \, \dx
		&=
		\dual{y}{ L^* \, z + c \, z }_{V, V^*}
		\\
		&=
		\dual{z}{ L   \, y + c \, y}_{V, V^*}
		=
		\int_\Omega z \, f \, \dx
		\\ &
		\le
		\norm{z}_{L^\infty(\Omega)}
		\,
		\norm{f}_{L^1(\Omega)}
		\le
		C_{p'}
		\,
		\norm{g}_{L^{p'}(\Omega)}
		\,
		\norm{f}_{L^1(\Omega)}
		.
	\end{align*}
	This implies
	$\norm{y}_{L^p(\Omega)} \le C_{p'} \, \norm{f}_{L^1(\Omega)}$.
\end{proof}

Of course, better estimates can be obtained in the previous lemma for dimensions $n < 3$, but we do not need them here.

\begin{remark}\label{R3.1}
Let us observe that the solution $z_v$ of \eqref{E3.4} satisfies the estimates \eqref{E3.13}--\eqref{E3.15} for $f = -\chi_\omega vy_u$. It is enough to take $c(x) = b'(x,y_u(x)) + \chi_\omega(x) u(x)$. Moreover, using \eqref{E3.3}, we get that $\{y_u\}_{u \in \uad}$ is uniformly bounded in $L^\infty(\Omega)$. Hence, the mentioned estimates for $z_v$ can be written in terms of the norm of $v$ in $\omega$.

Additionally, if $u_1, u_2 \in \uad$, then the estimates \eqref{E3.13}--\eqref{E3.15} are valid for $e = y_{u_2} - y_{u_1}$ in terms of $u_2 - u_1$. Indeed, it is enough to observe that subtracting the equations for $y_{u_2}$ and $y_{u_1}$, and using the mean value theorem we get that
\[
Le + b'(\cdot,y_\theta)e + \chi_\omega u_1e = \chi_\omega(u_1 - u_2)y_{u_2} \ \ \text{ in } V^*,
\]
where $y_\theta = y_{u_1} + \theta(y_{u_2}-y_{u_1})$ for some measurable function $0 \le \theta(x) \le 1$. Now, we apply \cref{L3.1} with $c(x) = b'(x,y_\theta(x)) + \chi_\omega(x)u_1(x)$ and $f = \chi_\omega(u_1 - u_2)y_{u_2}$, and we observe that $y_{u_2}$ is bounded in $L^\infty(\Omega)$.

The same comments apply to the difference of the adjoint states $\phi = \varphi_{u_2} - \varphi_{u_1}$. Indeed, $\phi$ satisfies the equation
\[
L^*\phi + b'(\cdot,y_{u_1})\phi + \chi_\omega u_1\phi = [b'(\cdot,y_{u_2}) - b'(\cdot,y_{u_1})]\varphi _{u_2} + \chi_\omega(u_2 - u_1)\varphi _{u_2}\ \ \text{ in } V^*.
\]
Besides the fact that $\varphi _{u_2} \in L^\infty(\Omega)$ we have with assumption \ref{asm:1} that
\[
\|b'(\cdot,y_{u_2}) - b'(\cdot,y_{u_1})\|_{L^r(\Omega)} \le C\|y_{u_2} - y_{u_1}\|_{L^r(\Omega)} \quad \forall r \ge 1.
\]
Then, we apply the convenient inequality of \cref{L3.1} to estimate $\|y_{u_2} - y_{u_1}\|_{L^r(\Omega)}$ in terms of $\|u_2 - u_1\|_{L^p(\omega)}$.
\end{remark}

\textit{Verification of \ref{H2}.} We prove that \ref{H2} holds with $q = \frac{6}{5}$. Since $\bar\varphi$ and $b(\cdot,\bar y)$ are bounded functions, according to the expression for $J''$ in \eqref{eq:second_deriv_obj} we only need the estimates
\begin{align*}
&\int_\Omega|z_{v_1}z_{v_2}|\, \dx \le \|z_{v_1}\|_{L^2(\Omega)}\|z_{v_2}\|_{L^2(\Omega)}\\
&\qquad \stackrel{\eqref{E3.15}}{\le} C\|v_1\|_{L^1(\omega)} \|v_2\|_{L^1(\omega)} \le C|\omega|^{1/3}\|v_1\|_{L^{6/5}(\omega)} \|v_2\|_{L^{6/5}(\omega)},
\end{align*}
and
\[
\int_\omega|v_1z_{v_2}|\, \dx \le \|v_1\|_{L^{6/5}(\omega)}\|z_{v_2}\|_{L^6(\Omega)}
\stackrel{\eqref{E3.13}}{\le}C\|v_1\|_{L^{6/5}(\omega)} \|v_2\|_{L^{6/5}(\omega)}.
\]
Hence, \ref{H2} holds with $q=6/5$.

\textit{Verification of \ref{H3}.} Let us fix $\varepsilon > 0$. For some $\delta$ that we will specify later, we take $u \in \uad \cap B^1_\delta(\bar u)$, and set $u_\theta = \bar u + \theta(u - \bar u)$ for some $\theta \in [0,1]$. Let us denote $v = u - \bar u$, $y_\theta = G(u_\theta)$, $z_\theta = G'(u_\theta)v$, and $\varphi_\theta$ the adjoint state corresponding to $u_\theta$. Analogously, we denote $(\bar y,\bar z,\bar \varphi)$  the associated functions to $\bar u$. With this notation, from \eqref{eq:second_deriv_obj} we obtain
\begin{align*}
&[J''(u_\theta) - J''(\bar u)]v^2 = \int_\Omega\big[(1 - \varphi_\theta b''(\cdot,y_\theta))z_\theta^2 - (1 - \bar\varphi b''(\cdot,\bar y))\bar z^2\big]\, \dx\\
& \qquad- 2\int_\omega(\varphi_\theta v z_\theta - \bar\varphi v \bar z)\, \dx\\
& \quad = \int_\Omega[(1 - \bar\varphi b''(\cdot,\bar y))](z_\theta^2 - \bar z^2)\, \dx + \int_\Omega(\bar\varphi - \varphi_\theta)b''(\cdot,y_\theta)z_\theta^2\, \dx\\
& \qquad + \int_\Omega\bar\varphi[b''(\cdot,\bar y) - b''(\cdot,y_\theta)]z^2_\theta\, \dx - 2\int_\omega(\varphi_\theta - \bar\varphi)v z_\theta\, \dx - 2\int_\omega\bar\varphi v (z_\theta - \bar z)\, \dx.
\end{align*}
We have to estimate these five integrals, that we denote by $I_1$ to $I_5$. From our assumption \ref{asm:1} and \eqref{E3.3} we deduce that $y_\theta$, $\bar y$, $b''(\cdot,y_\theta)$ and $b''(\cdot,\bar y)$ are bounded by a constant independent of $\theta \in [0,1]$ and $u \in \uad$. Moreover, from \cite[Theorem 4.2]{Stampacchia65} or \eqref{E3.14} and \ref{asm:5}, we infer the uniform boundedness of the adjoint states $\varphi_\theta$ and $\bar\varphi$.

As a further preparation, we provide an estimate for the difference $e = z_\theta - \bar z$.
By taking the difference of the corresponding equations \eqref{E3.4}, we find that $e$ solves the equation
\begin{equation*}
	Le + \bpd{\bar y}e + \chi_\omega\bar u e
	=
	(\bpd{\bar y} - \bpd{y_\theta}) z_\theta
	+ \chi_\omega(\bar u - u_\theta) z_\theta
	+ (\bar y - y_\theta) v.
\end{equation*}
Owing to \cref{L3.1}, we can estimate $\norm{e}_{L^6(\Omega)}$
by the $L^{6/5}(\Omega)$ norm of the right-hand side.
Together with Hölder's inequality, we obtain the estimate
\begin{align*}
	\norm{z_\theta - \bar z}_{L^6(\Omega)}
	&\le
	C_L\norm{\bpd{\bar y} - \bpd{y_\theta}}_{L^{12/5}(\Omega)} \norm{z_\theta}_{L^{12/5}(\Omega)}
	\\&\qquad
	+ C_L\norm{\bar u - u_\theta}_{L^{3/2}(\omega)} \norm{z_\theta}_{L^{6}(\Omega)}
	+ C_L\norm{\bar y - y_\theta}_{L^{6}(\Omega)} \norm{v}_{L^{3/2}(\omega)}
	.
\end{align*}
Now, we can use \ref{asm:1} and \cref{R3.1}, and we arrive at
\begin{align*}
	\norm{z_\theta - \bar z}_{L^6(\Omega)}
	&\le
	C \norm{\bar u - u_\theta}_{L^1(\omega)} \norm{v}_{L^{1}(\omega)}
	+ C \norm{\bar u - u_\theta}_{L^{3/2}(\omega)} \norm{v}_{L^{6/5}(\omega)}
	\\&\qquad
	+ C \norm{\bar u - u_\theta}_{L^{6/5}(\omega)} \norm{v}_{L^{3/2}(\omega)}
	.
\end{align*}
Using $u_\theta - \bar u = \theta v$,
taking into account that $\norm{u_\theta - \bar u}_{L^1(\omega)} \le\norm{v}_{L^1(\omega)} \le \delta $ and that
\begin{equation}
	\label{eq:3.16}
	\norm{v}_{L^q(\omega)}
	\le \norm{v}_{L^1(\omega)}^{1/q} \norm{v}_{L^\infty(\omega)}^{1-1/q}
	\le C \norm{v}_{L^1(\omega)}^{1/q},
\end{equation}
for $\delta \le 1$ the above estimates becomes
\begin{equation}
	\label{E3.16}
	\norm{z_\theta - \bar z}_{L^6(\Omega)}
	\le
	C \norm{v}_{L^1(\omega)}^{3/2}.
\end{equation}
Now, we are in position to estimate the above integrals.
For the first integral, we have
\begin{align*}
	\abs{ I_1 }
	&=\Bigabs{ \int_\Omega[(1 - \bar\varphi b''(\cdot,\bar y))](z_\theta^2 - \bar z^2)\, \dx }
	\\&
	\le \norm{1 - \bar\varphi b''(\cdot,\bar y)}_{L^{\infty}(\Omega)}
	\norm{z_\theta + \bar z}_{L^{2}(\Omega)}
	\norm{z_\theta - \bar z}_{L^{2}(\Omega)}
	\le
	C \, \norm{v}_{L^1(\omega)} \norm{v}_{L^1(\omega)}^{3/2},
\end{align*}
where we used \cref{R3.1} and \eqref{E3.16}.
Next,
\begin{align*}
	\abs{I_2} &=\Bigabs{\int_\Omega(\bar\varphi - \varphi_\theta)b''(\cdot,y_\theta)z_\theta^2\, \dx}
	\le C \, \norm{\bar\varphi - \varphi_\theta}_{L^{6}(\Omega)} \norm{b''(\cdot,y_\theta)}_{L^{\infty}(\Omega)} \norm{z_\theta}_{L^{6}(\Omega)}^2
	\\&
	\le C \, \norm{v}_{L^{6/5}(\omega)}^3
	\le C \, \norm{v}_{L^1(\omega)}^{5/2},
\end{align*}
where again \cref{R3.1} and \eqref{eq:3.16} have been utilized.
For the next integral, we remark that
$\norm{b''(\cdot,\bar y) - b''(\cdot,y_\theta)}_{L^\infty(\Omega)}$
can be estimated by any small positive number
if $\norm{\bar y - y_\theta}_{L^\infty(\Omega)}$ is small enough, cf.\ \ref{asm:1}.
For this, it is sufficient that $\delta$ is small enough,
since $u_\theta \in B_\delta^1(\bar u) \cap \uad$, see again \cref{R3.1}.
This along with \eqref{E3.15} leads to the estimate
\begin{align*}
	\abs{I_3} &=\Bigabs{\int_\Omega\bar\varphi[b''(\cdot,\bar y) - b''(\cdot,y_\theta)]z^2_\theta\, \dx}
	\le \norm{\bar\varphi}_{L^{\infty}(\Omega)} \norm{b''(\cdot,\bar y) - b''(\cdot,y_\theta)}_{L^{\infty}(\Omega)} \norm{z_\theta}_{L^{2}(\Omega)}^2
	\\&
	\le \frac{\varepsilon}{5} \, \norm{v}_{L^1(\omega)}^2.
\end{align*}
Finally, we obtain by using similar arguments
the estimates
\begin{align*}
	\abs{I_4} &=\Bigabs{\int_\omega(\varphi_\theta - \bar\varphi)v z_\theta\, \dx}
	\le \norm{\varphi_\theta - \bar\varphi}_{L^{6}(\Omega)} \norm{v}_{L^{3/2}(\omega)} \norm{z_\theta}_{L^{6}(\Omega)}
	\\&
	\le C \norm{v}_{L^{6/5}(\omega)} \norm{v}_{L^{3/2}(\omega)} \norm{v}_{L^{6/5}(\omega)}
	\le C \norm{v}_{L^1(\omega)}^{5/2}
\end{align*}
and
\begin{align*}
	\abs{I_5} &=\Bigabs{\int_\omega\bar\varphi v (z_\theta - \bar z)\, \dx}
	\le \norm{\bar\varphi}_{L^{\infty}(\Omega)} \norm{v}_{L^{6/5}(\omega)} \norm{z_\theta - \bar z}_{L^{6}(\Omega)}
	\\&
	\le C \norm{v}_{L^{6/5}(\omega)} \norm{v}_{L^{1}(\omega)}^{3/2}
	\le C \norm{v}_{L^{1}(\omega)}^{7/3},
\end{align*}
where we used additionally \eqref{E3.16}.
Putting these inequalities together, we obtain the desired estimate
\begin{equation*}
	\bigabs{ [J''(u_\theta) - J''(\bar u)]v^2 }
	\le
	\abs{I_1} + \abs{I_2} + \abs{I_3} + \abs{I_4} + \abs{I_5}
	\le
	\varepsilon \, \norm{u - \bar u}_{L^1(\omega)}^2,
\end{equation*}
if $\delta > 0$ is chosen small enough.
Hence, we verified
\ref{H3}
in our current setting.

\textit{Application of \cref{T2.2}.}
We have verified that the assumptions \ref{H1}--\ref{H4}
are satisfied in the setting of the bilinear distributed control problem \bdp.
Thus, we can apply \cref{T2.2} and we obtain the following sufficient second-order condition.
\begin{theorem}
	\label{T3.4}
	Let us assume that \ref{asm:coercivity}--\ref{asm:5} are satisfied.
	Moreover, we suppose that there is a constant $K > 0$, such that \eqref{E3.11} holds
	and that there exist $\tau > 0$ and $\kappa' < 2 \, \kappa$
	such that
	\begin{equation}
		J''(\bar u)v^2 \ge -\kappa'\|v\|^2_{L^1(\omega)} \ \ \forall v \in C^\tau_{\bar u},
		\label{E3.20}
	\end{equation}
	where $\kappa = (4(\beta - \alpha)K)^{-1}$.
	Then, there exist $\nu > 0$ and $\delta > 0$ such that
	\begin{equation*}
		J(\bar u) + \nu\|u - \bar u\|^2_{L^1(\omega)} \le J(u) \ \ \ \forall u \in \uad \cap B^1_\delta(\bar u).
\end{equation*}
\end{theorem}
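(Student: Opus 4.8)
The plan is to obtain this theorem as a direct application of the abstract result \cref{T2.2}, with $X = \omega$ and $\eta$ the Lebesgue measure. All the structural hypotheses \ref{H1}--\ref{H5} required there have already been established in the discussion preceding this theorem, so the proof essentially amounts to collecting them and checking that the constants match.

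Concretely, I would first recall that \ref{H1} holds because, by \cref{T3.1} and \cref{T3.2}, the reduced objective $J$ is of class $C^2$ on the open set $\AA$ containing $\uad$, and the necessary optimality condition \eqref{E3.10} is exactly the first-order variational inequality demanded in \ref{H1}. The representation \eqref{eq:first_deriv_obj}, that is $J'(\bar u)\,v = -\int_\omega \bar\varphi\,\bar y\,v\,\dx$, shows that \ref{H4} holds with $\bar\psi = -(\bar\varphi\,\bar y)|_\omega$, which lies in $L^\infty(\omega) \subset L^1(\omega)$; assumption \ref{H5} is precisely the standing hypothesis \eqref{E3.11}, and the constant $\kappa = (4(\beta-\alpha)K)^{-1}$ is the same as in \cref{T2.1}. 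Assumptions \ref{H2} (with $q = 6/5 \in [1,3/2)$) and \ref{H3} have been verified above: \ref{H2} follows from the expression \eqref{eq:second_deriv_obj} together with the regularity estimates of \cref{L3.1} for the linearized state, while \ref{H3} relies on the more delicate Lipschitz-type bound $\|z_\theta - \bar z\|_{L^6(\Omega)} \le C\,\|v\|_{L^1(\omega)}^{3/2}$, obtained by subtracting the equations \eqref{E3.4} and invoking \cref{L3.1} and \cref{R3.1}, together with the observation that, apart from one term controlled via the uniform continuity of $b''$ from \ref{asm:1}, every error term carries a power of $\|v\|_{L^1(\omega)}$ strictly larger than $2$.

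With \ref{H1}--\ref{H5} in force, the hypothesis \eqref{E3.20} is nothing but the second-order condition \eqref{eq305} in the present setting, so \cref{T2.2} applies verbatim and yields $\nu > 0$ and $\delta > 0$ with $J(\bar u) + \nu\|u - \bar u\|_{L^1(\omega)}^2 \le J(u)$ for all $u \in \uad \cap B^1_\delta(\bar u)$. Since the abstract machinery performs all the analysis, there is no genuine obstacle remaining in this final step; the real difficulty lay in the verification of \ref{H3}, where the dimension restriction $n \le 3$ and the careful choice of Lebesgue exponents are what make the borderline terms absorbable.
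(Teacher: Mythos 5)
Your proposal is correct and matches the paper's treatment exactly: \cref{T3.4} is stated in the paper without a separate proof precisely because it is the direct application of \cref{T2.2} once \ref{H1}--\ref{H5} have been verified in the preceding paragraphs (with $\bar\psi=-(\bar\varphi\bar y)|_\omega$, $q=6/5$ for \ref{H2}, and the estimate \eqref{E3.16} together with the decomposition into $I_1,\dots,I_5$ for \ref{H3}). Your accounting of which hypotheses come from where, and of why the remainder terms are absorbable, is accurate.
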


\subsection{A bilinear boundary control problem}
\label{S32}

In this section we assume that $n = 2$.
We outline the main steps which are necessary to transfer the analysis
of \cref{S31} to a bilinear boundary control problem.
We follow the notation introduced in \cref{S3} and assume that \ref{asm:coercivity}--\ref{asm:3} hold.
Further, we take $\omega = \Gamma_N$ equipped with the surface measure.
We define the operator $S_\omega:L^2(\omega) \longrightarrow V^*$ by
\[
\langle S_\omega(g),z\rangle = \int_\omega g(x)z(x)\, \dx\quad \forall z \in V,
\]
where we are denoting the trace of $z$ on $\omega$ by $z$ as well. It is well known that there exist a constant $C_\omega$ depending on $\Omega$ such that
\begin{equation}
\|z\|_{L^2(\omega)} \le C_\omega\|z\|_V\quad \forall z \in V.
\label{E3.21}
\end{equation}
Now, we consider the state equation
\begin{equation}
Ly + b(\cdot,y) + S_\omega(u y) = f \ \ \text{ in } V^*,
\label{E3.22}
\end{equation}
with $u \in \AA$. Here, $\AA$ is defined as follows
	\[
\AA = \{v \in L^\infty(\omega) : \exists\varepsilon_v > 0 \text{ such that } v(x) > -\frac{\Lambda	}{2C_\omega^2} + \varepsilon_v\ \text{ for a.a. } x \in \omega\},
\]
where $\Lambda$ was introduced in \ref{asm:coercivity}. From the assumptions \ref{asm:coercivity} and \ref{asm:3} along with \eqref{E3.21} we get
\[
\langle Ly,y\rangle + \langle S_\omega(uy),y\rangle \ge \Lambda\|y\|^2_V - \frac{\Lambda	}{2C_\omega^2}\|y\|^2_{L^2(\omega)} \ge \frac{\Lambda}{2}\|y\|^2_V \quad \forall y \in V.
\]

Then, \cref{T3.1} holds with the obvious modifications.
In particular, the equations \eqref{E3.4} and \eqref{E3.5} are modified as follows
\begin{equation}
Ly + b'(\cdot,y_u)z_v + S_\omega(uz_v) + S_\omega(vy_u) = 0
\label{E3.23}
\end{equation}
and
\begin{equation}
\begin{aligned}
&L \, w_{v_1,v_2}+ b'(\cdot, y_u) \, w_{v_1,v_2}+ S_\omega(u \, w_{v_1,v_2})\\
&\qquad\qquad + b''(\cdot, y_u) \, z_{v_1} \, z_{v_2}+ S_\omega (v_1 \, z_{v_2}) + S_\omega(v_2 \, z_{v_1}) = 0.
\end{aligned}
\label{E3.24}
\end{equation}

Associated with the state equation \eqref{E3.2} we introduce the bilinear boundary control problem
\begin{equation}
	\label{bbp}
	\tag{\textup{BBP}}
	\begin{aligned}
		&\text{Minimize }\ J(u) = \frac{1}{2}\norm{ y_u - y_d }_{L^2(\Omega)}^2\\
		&\text{subject to }\ u \in \uad,
	\end{aligned}
\end{equation}
where
\[
\uad = \{u \in L^\infty(\omega) : \alpha \le u(x) \le \beta \text{ for a.a. } x \in \omega\}
\]
with $0 \le \alpha < \beta < \infty$. We suppose that $y_d$ satisfies the assumption \ref{asm:5}. Then, \cref{T3.2} holds, we only need to change the adjoint state equation \eqref{eq:adjoint_equation} by
\begin{equation}
\label{E3.25}
L^* \, \varphi_u + b'(\cdot, y_u) \,\varphi_u + S_\omega(u \, \varphi_u) =  y_u - y_d\quad \text{in } V^*.
\end{equation}
We also have that \cref{T3.3} holds. To get the sufficient second-order conditions we assume that \eqref{E3.11} is fulfilled. Then, to check that \cref{T2.1,T2.2} hold we need to check that assumptions \ref{H1}--\ref{H5} are satisfied. As in \cref{S31}, it is enough to verify \ref{H2} and \ref{H3}. To this end we will use the following lemma.

\begin{lemma}
Let $c \in L^\infty(\Omega)$ be nonnegative and $u \in \AA$. For $(f,g) \in L^2(\Omega) \times L^2(\omega)$ let $y \in V$ be the solution of the equation
\begin{equation}
Ly + cy + S_\omega(uy) = f + S_\omega(g) \ \text{ in } V^*.
\label{E3.26}
\end{equation}
Then, for every $p \in [1,\infty)$ and $q > 1$ there exist constants $C_p$ and $M_q$ independent of $(f,g)$, $c$ and $u$ such that
\begin{align}
&\|y\|_{L^p(\Omega)} \le C_p\big(\|f\|_{L^1(\Omega)} + \|g\|_{L^1(\omega)}\big),
\label{E3.27}\\
&\|y\|_{L^\infty(\Omega)} \le M_q\big(\|f\|_{L^q(\Omega)} + \|g\|_{L^q(\omega)}\big).
\label{E3.28}
\end{align}
\label{L3.2}
\end{lemma}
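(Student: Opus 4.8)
The plan is to adapt the argument of \cref{L3.1} to the boundary-control setting, where the right-hand side now lives in $V^*$ through the operator $S_\omega$ as well as through an $L^2(\Omega)$ source. First I would establish the basic well-posedness: since $c \ge 0$ and $u \in \AA$, the operator $L + c + S_\omega(u\,\cdot)$ is coercive on $V$ with coercivity constant $\Lambda/2$, exactly as in the displayed computation preceding \eqref{E3.22}; combined with the continuity of the trace map $V \embeds L^2(\omega)$ from \eqref{E3.21} and of $S_\omega$ into $V^*$, the Lax–Milgram theorem yields a unique $y \in V$ depending continuously on $(f,g) \in L^{6/5}(\Omega) \times L^{4/3}(\omega)$ (using the Sobolev embeddings $V \embeds L^6(\Omega)$ and the trace embedding $V \embeds L^4(\omega)$ valid for $n=2$). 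This gives a first, crude a priori bound with constants uniform in $c$ and $u$.

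For \eqref{E3.28}, i.e.\ the $L^\infty(\Omega)$ estimate, I would invoke Stampacchia's truncation method as in \cite[Theorem~4.2]{Stampacchia65}, now with a boundary source term: testing the equation with truncations $(y-k)^+$ and using the coercivity together with the trace inequality $\norm{z}_{L^2(\omega)} \le C_\omega \norm{z}_V$ shows that the level sets of $y$ shrink fast enough, yielding $\norm{y}_{L^\infty(\Omega)} \le M_q(\norm{f}_{L^q(\Omega)} + \norm{g}_{L^q(\omega)})$ for any $q>1$ — here one needs $q > n/2 = 1$ for the volume term and $q > n-1 = 1$ for the boundary term, which is why the hypothesis is just $q>1$ in dimension $n=2$. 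The constant $M_q$ is uniform in $c,u$ because the only structural ingredients used are $c \ge 0$, $u > -\Lambda/(2C_\omega^2)$, and the coercivity constant $\Lambda/2$.

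Then \eqref{E3.27} follows by duality/transposition, mirroring the proof of \eqref{E3.15}. Given $p \in [1,\infty)$, pick $p' \in (1,\infty]$ conjugate to $p$; for $\phi \in L^{p'}(\Omega)$ let $z \in V$ solve the adjoint equation $L^* z + c z + S_\omega(u z) = \phi$. Since $p' > 1$, the $L^\infty$ bound \eqref{E3.28} gives $\norm{z}_{L^\infty(\Omega)} \le M_{p'} \norm{\phi}_{L^{p'}(\Omega)}$, and the trace bound gives control of $\norm{z}_{L^\infty(\omega)}$ as well. Pairing $y$ against $\phi$ and moving the operator onto $z$,
\[
\int_\Omega y\,\phi\dx = \dual{z}{f}{} + \int_\omega z\,g\dx \le \norm{z}_{L^\infty(\Omega)}\norm{f}_{L^1(\Omega)} + \norm{z}_{L^\infty(\omega)}\norm{g}_{L^1(\omega)} \le C\,\norm{\phi}_{L^{p'}(\Omega)}\bigl(\norm{f}_{L^1(\Omega)} + \norm{g}_{L^1(\omega)}\bigr),
\]
and taking the supremum over $\norm{\phi}_{L^{p'}(\Omega)} \le 1$ yields \eqref{E3.27} with $C_p = C\,M_{p'}$, uniform in $c$ and $u$.

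The main obstacle I anticipate is the $L^\infty$ estimate \eqref{E3.28} with a boundary source, since Stampacchia's argument must be carried out carefully to handle the $S_\omega(g)$ term — one must check that the boundary integral $\int_{\{y>k\}\cap\omega} g\,(y-k)^+\dx$ can be absorbed, which relies on the trace embedding $V \embeds L^s(\omega)$ for some $s>2$ (true for $n=2$) together with an interpolation/iteration on the level sets. The restriction $n=2$ is exactly what makes this work cleanly; for $n=3$ the boundary trace only embeds into $L^4(\omega)$ with a less favorable exponent, which is presumably why the section is stated for $n=2$. All other steps are routine adaptations of \cref{L3.1} and \cref{R3.1}.
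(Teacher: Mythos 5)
Your proposal is correct, but it takes a genuinely different route from the paper for \eqref{E3.27}. The paper obtains \eqref{E3.27} by viewing $L^1(\Omega)$ and $L^1(\omega)$ as subspaces of the regular Borel measures and invoking the known $W^{1,s}(\Omega)$-regularity theory for elliptic equations with measure data ($s < n/(n-1) = 2$), followed by the Sobolev embedding $W^{1,s}(\Omega) \subset L^p(\Omega)$ valid for $n=2$; for \eqref{E3.28} it simply cites \cite[Theorem~2]{AR97}. You instead derive \eqref{E3.27} by transposition from \eqref{E3.28}, exactly mirroring the proof of \eqref{E3.15} in \cref{L3.1}: solve the adjoint equation with datum $\phi \in L^{p'}(\Omega)$, use the $L^\infty$ bound (noting that the trace of a bounded $H^1$ function is bounded by the same constant, so the boundary pairing $\int_\omega z\,g\dx$ is controlled), and dualize. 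Your route is more self-contained — it needs only the $L^\infty$ estimate, which is required anyway — and avoids the measure-data regularity references, at the price of not recovering the gradient integrability $y \in W^{1,s}(\Omega)$ that the paper's argument yields as a by-product. For \eqref{E3.28} you sketch a Stampacchia truncation with a boundary source where the paper cites the literature; your exponent conditions $q > n/2$ and $q > n-1$, both reducing to $q>1$ for $n=2$, are the correct ones, and your observation about why the argument degrades for $n=3$ matches the paper's closing remark in this subsection. The uniformity of the constants in $c$ and $u$ holds in your argument for the same reason as in the paper: only $c \ge 0$, the lower bound on $u$, and the coercivity constant $\Lambda/2$ enter, and the adjoint operator $L^*$ enjoys the same coercivity.
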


\begin{proof}
Since $L^1(\Omega)$ and $L^1(\omega)$ are subspaces of the space of real and regular Borel measures in $\Omega$ and $\omega$, respectively, we can apply the well known results for measures to deduce that the solution $y$ of \eqref{E3.26} satisfies
\[
\|y\|_{W^{1,s}(\Omega)} \le C_s\big(\|f\|_{L^1(\Omega)} + \|g\|_{L^1(\omega)}\big)
\]
for every $s \in [1,\frac{n}{n-1})$ and some constant $C_s$ independent of $(f,g)$, $c$ and $u$; see, for instance, \cite{AR97}, \cite{Casas93}, or \cite{MeyerPanizziSchiela2011}.

Since we have assumed $n = 2$, for every $p \in [1,\infty)$ there exists $s < \frac{n}{n-1}$ such that $W^{1,s}(\Omega) \subset L^p(\Omega)$ and, hence, \eqref{E3.27} follows from the above estimate. The estimate \eqref{E3.28} is proved in \cite[Theorem~2]{AR97}.
\end{proof}

Hence, though simpler estimates can be used, the estimates  used in \cref{S31} are valid to verify  \ref{H2} and \ref{H3}.
As a consequence, we obtain a second-order sufficient condition
analogously to \cref{T3.4} in the distributed case.

We finally mention that the same technique cannot be used to address the case $n > 2$.
The verification of \ref{H2} and \ref{H3} for bilinear boundary control problems in more than two spatial dimensions
remains an open problem.

\section{Numerical approximation of distributed control problems}
\label{S4}
In this section, we consider the following boundary value problem
\begin{equation}
\left\{\begin{array}{l}
\displaystyle Ay + b(\cdot,y) + \chi_\omega uy = f \ \text{ in } \Omega,\\y = 0 \ \text{ on } \Gamma,\end{array}\right.\label{E4.1}
\end{equation}
where $A$ is given by \eqref{E3.0} with coefficients $a_{ij} \in C^{0,1}(\bar\Omega)$ satisfying the ellipticity condition
\[
\sum_{i,j = 1}^na_{ij}(x)\xi_i\xi_j \ge \Lambda|\xi|^2 \ \ \forall x \in \Omega \text{ and } \forall \xi \in \R^n.
\]
We also assume that $a_0 \in L^\infty(\Omega)$, $a_0 \ge 0$, $b$ satisfies the assumption \ref{asm:1}, and $f \in L^{\bar p}(\Omega)$ with $\bar p > n$. We follow the notation introduced in \cref{S3}. Hence, by \cref{T3.1} we know that \eqref{E4.1} has a unique solution $y_u \in Y = H^1_0(\Omega) \cap L^\infty(\Omega)$ $\forall u \in \AA$.

We also introduce the adjoint state equation associated to the control $u$
\begin{equation}
\left\{\begin{array}{l}
\displaystyle A^*\varphi + b'(\cdot,y_u)\varphi + \chi_\omega u\varphi = y_u - y_d \ \text{ in } \Omega,\\\varphi = 0 \ \text{ on } \Gamma.\end{array}\right.\label{E4.2}
\end{equation}

Now, we consider the control problem \bdp associated to the equation \eqref{E4.1}.
Here we suppose that $y_d \in L^{\bar p}(\Omega)\cap L^2(\Omega)$.
We also assume that $\bar\omega \subset \Omega$.
Let us observe that if this condition does not hold, then the assumption \eqref{E3.11} can be fulfilled only in some extreme cases. This is due to the fact that $\bar y$ and $\bar\varphi$ vanish on $\Gamma$ and, hence, the $\{x \in \Omega : |\bar y(x)\bar\varphi(x)| \le \varepsilon\}$ contains a strip along the boundary with a measure of order $\sqrt{\varepsilon}$. The situation is different for Neumann boundary problems.

Since assumptions \ref{asm:coercivity}--\ref{asm:5} are satisfied, \cref{T3.2,T3.3} are valid for the the control problem \bdp associated to the state equation \eqref{E4.1}. In what follows, $\bar u$ will denote a local solution of \bdp satisfying the regularity condition \eqref{E3.11}. Therefore, \cref{T3.4} holds as well.

The goal of this section is to prove error estimates for the numerical approximation of \bdp based on a finite element discretization.
To this end, we assume that $\Omega$ is convex and $\Gamma$ is of class $C^{1,1}$.
Therefore, we have additional regularity for the states $y_u$ and adjoint states $\varphi_u$ for every $u  \in \AA$, namely $y_u, \varphi_u \in W^{2,\bar p}(\Omega) \cap W_0^{1,\bar p}(\Omega)$; see \cite[Chapter 2]{Grisvard85}.
Since $\bar p > n$, we have that $W^{2,\bar p}(\Omega) \subset C^1(\bar\Omega)$. If $n = 2$, this regularity holds for a convex and polygonal domain $\Omega$ assuming that the coefficients $a_{ij}$ are of class $C^1$ in $\bar\Omega$. In dimension $n = 3$, the regularity result is valid for rectangular parallelepipeds under the same $C^1$ regularity of the coefficients; see \cite[Chapter 4]{Grisvard85}, \cite[Corollary 3.14]{Dauge92}.

Let $\{{\mathcal{T}}_h\}_{h>0}$ be a quasi-uniform family of triangulations of $\bar\Omega$; see \cite{Ciarlet78}. We set $\overline\Omega_h=\cup_{T\in {\mathcal{T}}_h}T$ with $\Omega_h$ and $\Gamma_h$ being its interior and boundary, respectively. We assume that the vertices of  ${\mathcal{T}}_h$ placed on the boundary $\Gamma_h$ are also points of $\Gamma$ and there exists a constant $C_\Gamma > 0$ such that $\operatorname{dist}(x,\Gamma) \le C_\Gamma h^2$ for every $x \in \Gamma_h$. This always holds if $\Gamma$ is a $C^2$ boundary and $n = 2$. From this assumption we know \cite[Section~5.2]{Raviart-Thomas83} that
\begin{equation}
|\Omega\setminus \Omega_h| \leq C_\Omega h^2, \label{E4.3}
\end{equation}
where $|\cdot|$ denotes the Lebesgue measure. Let us denote by $\mathcal{T}_{\omega,h}$ the family of all elements $T \in \mathcal{T}_h$ such that $T \subset \bar\omega$. We set $\bar\omega_h = \bigcup_{T\in \mathcal{T}_{\omega,h}}T$ and $\omega_h$ is its interior. We also assume that $|\omega\setminus\omega_h| \le C_\omega h^{p_\omega}$ with $p_\omega > n/2$.

Associated with this triangulation we define the spaces
\begin{align*}
& \uh = \{u_h \in L^\infty(\omega_h) : {u_h}_{\mid_T} \in \mathcal{P}_0(T)\ \ \forall T \in \mathcal{T}_{\omega,h}\},\\
&\yh = \{y_h \in C(\bar\Omega) : {y_h}_{\mid_T} \in \mathcal{P}_1(T)\ \ \forall T \in \mathcal{T}_h\text{ and } y_h = 0 \text{ in } \bar\Omega\setminus\Omega_h\},
\end{align*}
where $\mathcal{P}_k(T)$ denotes the polynomial of degree $k$ in $T$ with $k = 0, 1$. Now, for every $u \in \AA$ we consider the discrete system of nonlinear equations
\begin{align}
&\text{Find } y_h \in Y_h \text{ such that } \forall z_h \in Y_h\notag\\
&a(y_h,z_h) + \int_\Omega[b(\cdot,y_h) + \chi_{\omega_h} uy_h]z_h\, \dx = \int_\Omega fz_h\, \dx,
\label{E4.4}
\end{align}
where the bilinear form $a$ was defined in \eqref{E3.a}. Using our assumptions on $b$ and the ellipticity of the operator $y \to Ay + uy$, the existence and uniqueness of a solution of \eqref{E4.3} follows by standard arguments. This solution will be denoted by $y_h(u)$. We also consider the discrete adjoint state equation
\begin{align}
&\text{Find } \varphi_h \in Y_h \text{ such that } \forall z_h \in Y_h\notag\\
&a(z_h,\varphi_h) + \int_\Omega[a_0 + b'(\cdot,y_h(u)) + \chi_{\omega_h} u]\varphi_hz_h\, \dx = \int_\Omega (y_h(u) - y_d)z_h\, \dx.
\label{E4.5}
\end{align}
The solution of this adjoint equation is denoted by $\varphi_h(u)$.


The following approximation results are needed for the numerical analysis of the discrete control problem.

\begin{lemma}\label{L4.1} Let $u \in \AA$ fulfill $\|u\|_{L^\infty(\omega)}\leq M$, and let $y$, $y_h$, $\varphi$ and $\varphi_{h}$ be the solutions of \eqref{E4.1}, \eqref{E4.4}, \eqref{E4.2} and \eqref{E4.5}, respectively. Then, for some constant $C$ depending on $M$ we have
\begin{equation}\label{E4.6}
\|y-y_h\|_{L^\infty(\Omega)} + \|\varphi-\varphi_h\|_{L^\infty(\Omega)} \leq C h,
\end{equation}
\end{lemma}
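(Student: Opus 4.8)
The plan is to reduce \eqref{E4.6} to the classical $L^\infty$-error estimate for piecewise linear finite elements applied to \emph{linear} elliptic equations on convex domains, handling the semilinear term $b$ by monotonicity and a mean-value linearisation, and absorbing the geometric mismatch between $\omega$ and $\omega_h$ as a lower-order perturbation. I use throughout the regularity recalled before the lemma: since $\Omega$ is convex with $\Gamma\in C^{1,1}$ and $\bar p>n$, the continuous solutions $y=y_u$ of \eqref{E4.1} and $\varphi=\varphi_u$ of \eqref{E4.2} lie in $W^{2,\bar p}(\Omega)\cap W^{1,\bar p}_0(\Omega)\subset C^1(\bar\Omega)$ with norms controlled by $M$; moreover $\yh$ consists of continuous functions vanishing on $\bar\Omega\setminus\Omega_h\supset\Gamma$, so $\yh\subset V=H^1_0(\Omega)$ and, when continuous and discrete equations are subtracted, all bilinear forms and source terms reduce to $\Omega_h$ and no boundary-strip error appears.

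\emph{Uniform bounds and the linear estimate.} First I would establish $\|y_h\|_{L^\infty(\Omega)}+\|\varphi_h\|_{L^\infty(\Omega)}\le C$ uniformly in $h$: testing \eqref{E4.4} with $y_h$ and using $b(\cdot,y_h)\,y_h\ge 0$ (from $b(\cdot,0)=0$, $b'\ge 0$) together with the uniform coercivity of the bilinear form $(v,w)\mapsto a(v,w)+\int_\Omega\chi_{\omega_h}u\,v\,w\dx$ (from $u>-\Lambda/2$ and \ref{asm:coercivity}) yields an $H^1_0$-bound, from which the $L^\infty$-bound follows either by a discrete Stampacchia truncation argument on the quasi-uniform mesh or by comparison with the finite element solution of the equation with $b'$ frozen; then $b'(\cdot,y_h)$ is bounded by \ref{asm:1} and \eqref{E4.5} gives the bound on $\varphi_h$. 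Next I would invoke the standard fact that, for $c\in L^\infty(\Omega)$ with $L+c$ coercive on $V$ and $z\in W^{2,\bar p}(\Omega)\cap W^{1,\bar p}_0(\Omega)$ solving $Lz+cz=g$, $g\in L^{\bar p}(\Omega)$, $\bar p>n$, the finite element approximation $z_h\in\yh$ satisfies $\|z-z_h\|_{L^\infty(\Omega)}\le C h$ (in fact $C h^{2-n/\bar p}\lvert\log h\rvert$) and the discrete solution operator is $L^\infty$-stable, $\|z_h\|_{L^\infty(\Omega)}\le C\|z\|_{L^\infty(\Omega)}\le C\|g\|_{L^q(\Omega)}$ for any $q>n/2$; this rests on $W^{2,\bar p}$-regularity (convexity, $C^{1,1}$-boundary, or convex polygon with $C^1$-coefficients), Stampacchia's maximum estimate \cite[Theorem~4.2]{Stampacchia65}, and the $L^\infty$-quasi-optimality and $L^\infty$-stability of the Ritz projection on quasi-uniform meshes.

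\emph{State and adjoint error.} For $e=y-y_h$, subtracting the weak form of \eqref{E4.1} tested against $z_h\in\yh$ from \eqref{E4.4} and writing $b(\cdot,y)-b(\cdot,y_h)=b'(\cdot,y_\theta)\,e$ with $y_\theta$ between $y$ and $y_h$ gives, for all $z_h\in\yh$,
\[
a(e,z_h)+\int_\Omega\bigl(b'(\cdot,y_\theta)+\chi_{\omega_h}u\bigr)\,e\,z_h\dx=-\int_{\omega\setminus\omega_h}u\,y\,z_h\dx.
\]
Thus $y_h$ is exactly the finite element approximation, for the coercive operator $L+c_h$ with $c_h:=b'(\cdot,y_\theta)+\chi_{\omega_h}u\in L^\infty(\Omega)$, of the solution $w=y\in W^{2,\bar p}(\Omega)$ of $Lw+c_hw=g_h$ with $g_h:=f-b(\cdot,y)+b'(\cdot,y_\theta)\,y-\chi_{\omega\setminus\omega_h}u\,y\in L^{\bar p}(\Omega)$, perturbed by the functional $z_h\mapsto-\int_{\omega\setminus\omega_h}u\,y\,z_h\dx$. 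Splitting $e=(y-\bar y_h)+(\bar y_h-y_h)$ with $\bar y_h\in\yh$ the unperturbed approximation of $y$ for $L+c_h$, the linear estimate gives $\|y-\bar y_h\|_{L^\infty}\le C h$, while $\bar y_h-y_h$ is the finite element solution for $L+c_h$ with source supported on $\omega\setminus\omega_h$ and of $L^q$-norm $\le C\lvert\omega\setminus\omega_h\rvert^{1/q}\le C h^{p_\omega/q}$; choosing $q\in(n/2,\bar p\,]$ (nonempty since $p_\omega>n/2$) and using the discrete $L^\infty$-stability gives $\|\bar y_h-y_h\|_{L^\infty}\le C h$, hence $\|y-y_h\|_{L^\infty(\Omega)}\le C h$. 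The adjoint error $e_\varphi=\varphi-\varphi_h$ is treated the same way: since \eqref{E4.2} and \eqref{E4.5} are linear in $\varphi$, $e_\varphi$ solves (tested against $\yh$) a linear discrete equation for the coercive operator $L^*+b'(\cdot,y_h)+\chi_{\omega_h}u$ with right-hand side $\bigl[b'(\cdot,y)-b'(\cdot,y_h)\bigr]\varphi+(y-y_h)$ plus the residual on $\omega\setminus\omega_h$, which is $O(h)$ in $L^{\bar p}(\Omega)$ by \ref{asm:1}, $\|\varphi\|_{L^\infty}\le C$ and the state estimate; since $\varphi$ solves the companion coercive equation with $L^{\bar p}$-data and $W^{2,\bar p}$-solution, the same splitting yields $\|\varphi-\varphi_h\|_{L^\infty(\Omega)}\le C h$, which combined with the state estimate proves \eqref{E4.6}.

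The genuinely delicate points are the uniform $L^\infty$-bound for $y_h$ — where only the \emph{local} Lipschitz and growth control on $b$ in \ref{asm:1} is available, forcing either a $P_1$-compatible discrete truncation argument or a bootstrap through the linear estimate — and the $L^\infty$ finite element machinery ($L^\infty$-quasi-optimality and $L^\infty$-stability of the Ritz projection on quasi-uniform meshes) underlying the linear estimate; everything else is routine bookkeeping of the lower-order terms supported on $\Omega\setminus\Omega_h$ and $\omega\setminus\omega_h$, controlled by \eqref{E4.3} and $\lvert\omega\setminus\omega_h\rvert\le C_\omega h^{p_\omega}$ with $p_\omega>n/2$.
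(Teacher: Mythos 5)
Your argument is correct, but it is organized differently from the paper's proof. The paper inserts a \emph{continuous} intermediate state: with $u_h=\chi_{\omega_h}u$ it splits $\|y-y_h\|_{L^\infty(\Omega)}\le\|y-y_{u_h}\|_{L^\infty(\Omega)}+\|y_{u_h}-y_h\|_{L^\infty(\Omega)}$, handles the first term entirely at the continuous level via the Lipschitz stability of \cref{L3.1} and \cref{R3.1} (giving $C\|u-u_h\|_{L^{p_\omega}(\omega)}\le Ch$ from $|\omega\setminus\omega_h|\le C_\omega h^{p_\omega}$), and for the second term --- the finite element error for the semilinear equation with one fixed control --- simply cites the classical $L^\infty$ estimate $h^{2-n/\bar p}|\log h|$ from \cite{ACT02,Casas-Mateos02a,Rannacher76,Schatz98}. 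You instead insert a \emph{discrete} intermediate object (the Galerkin approximation of $y$ for the frozen coefficient $c_h=b'(\cdot,y_\theta)+\chi_{\omega_h}u$) and push the $\omega\setminus\omega_h$ mismatch to the discrete level, which obliges you to invoke the $L^\infty$-stability of the discrete solution operator with respect to $L^q$ data, $q>n/2$ --- a tool the paper's route does not need, since for that step it only uses the continuous Stampacchia estimate \eqref{E3.14}. Your write-up also makes explicit two ingredients the paper outsources to the cited references, namely the uniform $L^\infty$ bound on $y_h$ and the $L^\infty$ quasi-optimality of the Ritz projection on quasi-uniform meshes. Both routes are sound and use the same basic ingredients; the paper's is shorter because the continuous perturbation estimates are already available from \cref{S3}, whereas yours is more self-contained about exactly where the finite element machinery enters (at the price of the additional discrete stability result, whose constants must be checked to be uniform in the $h$-dependent coefficient $c_h$, which your uniform $L^\infty$ bound on $y_h$ guarantees).
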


\begin{proof}
Let us denote $u_h = \chi|_{\omega_h}u$ and $y_{u_h}$ its continuous associated state. From \cref{L3.1} and \cref{R3.1}, and using the classical $L^\infty$-estimates for finite element approximations, see \cite{ACT02,Casas-Mateos02a} and \cite{Rannacher76,Schatz98}, we get
\begin{align*}
&\|y - y_h\|_{L^\infty(\Omega)} \le \|y - y_{u_h}\|_{L^\infty(\Omega)} + \|y_{u_h} - y_h\|_{L^\infty(\Omega)}\\
&\le C_1(\|u - u_h\|_{L^{^{p_\omega}}(\omega)} + h^{2 - n/\bar p}|\log{h}|) \le Ch,
\end{align*}
where we have used that $|\omega\setminus\omega_h| \le C_\omega h^{p_\omega}$. From this estimate we deduce the corresponding estimate for $\varphi - \varphi_h$ by using similar arguments.
\end{proof}

Finally, we define the discrete control problem
\begin{equation*}
	\label{bdph}
	\tag{\textup{BDP}$_h$}
	\begin{aligned}
		&\text{Minimize }\ J_h(u_h) = \frac{1}{2}\norm{ y_h(u_h) - y_d }_{L^2(\Omega_h)}^2
		+ \frac{\alpha_h}{2} \, \norm{u_h}_{L^2(\omega_h)}^2\\
		&\text{subject to }\ u_h \in \uadh,
	\end{aligned}
\end{equation*}
where
\[
\uadh = \{u_h \in \uh : \alpha \le u_h(x) \le \beta \text{ for a.a. } x \in \omega_h\}.
\]
Moreover, we included a Tikhonov parameter $\alpha_h \ge 0$
and require $\alpha_h \to 0$ as $h \to 0$.
This regularization term is beneficial for the numerical solution of \eqref{bdph}
and we will prove that the choice $\alpha_h = c \, h$ yields the same order of convergence
as $\alpha_h = 0$, see \eqref{E4.11} below.

Let us check that these approximations of \bdp fit into the framework described in \cref{S23}. To this end we have to check the assumptions \ref{D1}--\ref{D4}.  First, we observe that taking $X = \omega$, $X_h = \omega_h$ and $\eta =$ Lebesgue measure in $\omega$, \ref{D1} follows from our assumption $|\omega \setminus \omega_h| \to 0$ as $h \to 0$.

Assumption \ref{D1p5} is immediate. Indeed, it is enough to observe that given $u \in \uad$ we can take $u_h$ as the projection of $u$ on $\uh$:
\begin{equation}
\Pi_h u = u_h = \sum_{T \in \mathcal{T}_{\omega,h}}u_T\chi_T\  \text{ with }\ u_T = \frac{1}{T}\int_Tu\, \dx,
\label{E4.9}
\end{equation}
where $\chi_T$ denotes the characteristic function of $T$. It is well known that $u_h \to u$ strongly in $L^p(\omega)$ under the assumption $u \in L^p(\omega)$; see \cite{DDW75}.

Now, \ref{D2} is obvious.
\ref{D3} is a straightforward consequence of the following lemma.
\begin{lemma}\label{L4.2}
If $u_h\rightharpoonup u$ weakly in $L^1(\omega)$ with $u_h \in \AA \cap \uh$ and $u \in \AA$, and there exists a constant $M > 0$ such that $\|u_h\|_{L^\infty(\omega_h)} \le M$ $\forall h > 0$, then $y_h(u_h)\to y_u$ and $\varphi_{h}(u_h)\to \varphi_{u}$ in $L^\infty(\Omega)$ as $h \to 0$ strongly, and $J(u) = \lim_{h\to 0}J_h(u_h)$.
\end{lemma}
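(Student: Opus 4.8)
The plan is to reduce the whole statement to strong $L^\infty(\Omega)$-convergence of the continuous states associated with $u_h$. Write $\tilde u_h := \chi_{\omega_h} u_h \in L^\infty(\Omega)$ for the control coefficient actually appearing in the discrete equation \eqref{E4.4}, and let $y_{\tilde u_h}$ and $\varphi_{\tilde u_h}$ denote the \emph{continuous} state and adjoint state, i.e.\ the solutions of \eqref{E4.1} and \eqref{E4.2} with control $\tilde u_h$. Since $\|\tilde u_h\|_{L^\infty(\omega)} = \|u_h\|_{L^\infty(\omega_h)} \le M$ and $u_h > -\Lambda/2$, the operator $L + \chi_{\omega_h} u_h$ is coercive on $V$ uniformly in $h$ (with constant $\ge \Lambda/2$), so by \eqref{E3.3} and \cref{R3.1} both $\{y_{\tilde u_h}\}$ and $\{\varphi_{\tilde u_h}\}$ are bounded in $L^\infty(\Omega)$, independently of $h$. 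Reinserting these bounds into \eqref{E4.1}--\eqref{E4.2} and invoking \ref{asm:1}, $f \in L^{\bar p}(\Omega)$ and $y_d \in L^{\bar p}(\Omega)$, the data $f - b(\cdot,y_{\tilde u_h}) - \tilde u_h y_{\tilde u_h}$ and $(y_{\tilde u_h} - y_d) - (b'(\cdot,y_{\tilde u_h}) + \tilde u_h)\varphi_{\tilde u_h}$ stay bounded in $L^{\bar p}(\Omega)$; hence the $W^{2,\bar p}$-regularity of \eqref{E4.1}--\eqref{E4.2} (using $\Omega$ convex, $\Gamma \in C^{1,1}$, $a_{ij}$ Lipschitz) gives a uniform bound $\|y_{\tilde u_h}\|_{W^{2,\bar p}(\Omega)} + \|\varphi_{\tilde u_h}\|_{W^{2,\bar p}(\Omega)} \le C$.

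Because $\bar p > n$, the embedding $W^{2,\bar p}(\Omega) \embeds C(\bar\Omega)$ is compact, so along any subsequence $y_{\tilde u_h} \to y^\ast$ in $C(\bar\Omega)$ and $y_{\tilde u_h} \weakly y^\ast$ in $H^1_0(\Omega)$. To identify $y^\ast$ I would pass to the limit in the weak form of \eqref{E4.1}: the term $a(y_{\tilde u_h},z)$ converges by weak $H^1_0$-convergence, $b(\cdot,y_{\tilde u_h}) \to b(\cdot,y^\ast)$ in $C(\bar\Omega)$ by the local Lipschitz continuity of $b$ in \ref{asm:1}, and for the bilinear term I would use that $u_h \weakly u$ in $L^1(\omega)$ together with $\|u_h\|_{L^\infty} \le M$ and $|\omega \setminus \omega_h| \to 0$ yields $\tilde u_h \weaklystar \chi_\omega u$ in $L^\infty(\Omega)$, so that $\int_\Omega \tilde u_h\, y_{\tilde u_h}\, z \dx = \int_\Omega \tilde u_h (y_{\tilde u_h} - y^\ast) z \dx + \int_\Omega \tilde u_h\, y^\ast z \dx$, where the first integral is bounded by $M\|y_{\tilde u_h} - y^\ast\|_{C(\bar\Omega)}\|z\|_{L^1(\Omega)} \to 0$ and the second converges since $y^\ast z \in L^1(\Omega)$. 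This identifies $y^\ast$ as the solution of \eqref{E4.1} with control $u$, so $y^\ast = y_u$ by the uniqueness in \cref{T3.1}; as the limit does not depend on the subsequence, $y_{\tilde u_h} \to y_u$ in $C(\bar\Omega)$. Repeating the argument for \eqref{E4.2}, now also using $b'(\cdot,y_{\tilde u_h}) \to b'(\cdot,y_u)$ in $C(\bar\Omega)$ and $y_{\tilde u_h} - y_d \to y_u - y_d$ in $L^2(\Omega)$, gives $\varphi_{\tilde u_h} \to \varphi_u$ in $C(\bar\Omega)$.

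Next I would bring in the discretization error. Applying \cref{L4.1} with the control $\tilde u_h$ (whose $L^\infty$-norm is bounded by $M$) gives $\|y_{\tilde u_h} - y_h(u_h)\|_{L^\infty(\Omega)} + \|\varphi_{\tilde u_h} - \varphi_h(u_h)\|_{L^\infty(\Omega)} \le C\,h$ with $C = C(M)$, and combining this with the previous paragraph yields $y_h(u_h) \to y_u$ and $\varphi_h(u_h) \to \varphi_u$ in $L^\infty(\Omega)$, which is the first claim. For the objective I would split $\|y_h(u_h) - y_d\|_{L^2(\Omega_h)}^2 = \|y_h(u_h) - y_d\|_{L^2(\Omega)}^2 - \|y_h(u_h) - y_d\|_{L^2(\Omega \setminus \Omega_h)}^2$: the first term converges to $\|y_u - y_d\|_{L^2(\Omega)}^2 = 2 J(u)$ by the $L^2(\Omega)$-convergence of the states, while the second is bounded by $2\|y_h(u_h)\|_{L^\infty(\Omega)}^2\,|\Omega \setminus \Omega_h| + 2\|y_d\|_{L^2(\Omega \setminus \Omega_h)}^2 \to 0$ by \eqref{E4.3} and $y_d \in L^2(\Omega)$. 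Since $0 \le \frac{\alpha_h}{2}\|u_h\|_{L^2(\omega_h)}^2 \le \frac{\alpha_h}{2} M^2 |\omega| \to 0$ because $\alpha_h \to 0$, we obtain $J_h(u_h) \to J(u)$.

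The step I expect to be the crux is the second one, namely extracting \emph{strong} convergence of the states from the merely \emph{weak} convergence $u_h \weakly u$: a direct energy or Stampacchia estimate for $y_{\tilde u_h} - y_u$ would retain the term $(\chi_\omega u - \tilde u_h)\,y_u$, which only converges weakly, so the compactness furnished by $W^{2,\bar p}(\Omega) \embeds C(\bar\Omega)$ for $\bar p > n$ (and hence the standing assumptions $\Omega$ convex, $\Gamma \in C^{1,1}$, $f, y_d \in L^{\bar p}(\Omega)$) is genuinely needed. Everything else reduces to routine bookkeeping of the mismatches between $\omega$ and $\omega_h$ and between $\Omega$ and $\Omega_h$, absorbed by $|\omega \setminus \omega_h| \to 0$ and $|\Omega \setminus \Omega_h| \le C_\Omega h^2$.
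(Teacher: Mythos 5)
Your proposal is correct and follows essentially the same route as the paper: extend $u_h$ by zero outside $\omega_h$, split the error into the discretization part controlled by \cref{L4.1} and the continuous part $y_u - y_{\tilde u_h}$, and handle the latter via the uniform $W^{2,\bar p}(\Omega)$ bound and the compact embedding into $C(\bar\Omega)$. The only difference is that you spell out the limit identification through the weak formulation and the treatment of $\Omega\setminus\Omega_h$ and the Tikhonov term, which the paper compresses into "we deduce easily."
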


\begin{proof}
Let us extend every $u_h$ to $\omega$ by setting $u_h(x) = 0$ $\forall x \in \omega\setminus\omega_h$. From \eqref{E4.6} we get
\[
\|y_u - y_h(u_h)\|_{L^\infty(\Omega)} \le \|y_u - y_{u_h}\|_{L^\infty(\Omega)}  + \|y_{u_h} - y_h(u_h)\|_{L^\infty(\Omega)} \le \|y_u - y_{u_h}\|_{L^\infty(\Omega)} + Ch.
\]
Now, we prove that $\|y_u - y_{u_h}\|_{L^\infty(\Omega)} \to 0$ as $h \to 0$. Since $\|u_h\|_{L^\infty(\omega)} \le M$ $\forall h > 0$, then $\{y_{u_h}\}_h$ is bounded in $W^{2,\bar p}(\Omega)$. Using the compactness of the embedding $W^{2,\bar p}(\Omega) \subset L^\infty(\Omega)$, we deduce easily the convergence $\|y_u - y_{u_h}\|_{L^\infty(\Omega)} \to 0$ as $h \to 0$. The convergence $J_h(u_h) \to J(u)$ follows easily by using $\alpha_h \to 0$.
\end{proof}

To check \ref{D4} we take
\[
\AA_h = \{v \in L^\infty(\omega_h) : \exists\varepsilon_v > 0 \text{ such that } v(x) > -\frac{\Lambda	}{2} + \varepsilon_v\ \text{ for a.a. } x \in \omega_h\}.
\]
It is easy to prove that $J_h:\AA_h \longrightarrow \R$ is of class $C^2$ and its first derivative is given by
\begin{equation}
J'_h(u)v = -\int_{\omega_h}\varphi_h(u)y_h(u)\, \dx
+ \alpha_h \, \int_{\omega_h} u \, v \, \dx
\quad \forall u \in \AA_h \text{ and } \forall v \in L^\infty(\omega_h),
\label{E4.10}
\end{equation}
where $y_h(u)$ and $\varphi_h(u)$ are the solutions of  \eqref{E4.4} and \eqref{E4.5}, respectively.
Hence, it is enough to take $\psi_h = - (\varphi_h(u)y_h(u))|_{\omega_h} + \alpha_h \, u$.
Concerning the function $J : \AA \to \R$, we already know that it is of class $C^2$ (\cref{T3.2}), and according to \eqref{eq:first_deriv_obj} we can take $\psi = -(\varphi_uy_u)|_{\omega}$.

Therefore, \cref{T2.3,T2.4} hold. Observe that \cref{T2.3} is formulated as follows.

\begin{theorem}\label{T4.1}
Assume that \ref{asm:coercivity}--\ref{asm:5} hold. For every $h$, the problem \bdph has at least a global solution $\bar u_h$.
If $\{\bar u_h\}_h$ is a sequence of global solutions of \bdph and
$\bar u_h \weaklystar \tilde u$ in $L^\infty(\omega)$,
then $\tilde u$ is a global solution of \bdp.
Conversely, if $\bar u$ is a bang-bang strict local minimum of \bdp in the $L^1(\omega)$ sense, then there exists a sequence $\{\bar u_h\}_h$ of local minimizers of problems \bdph with respect to the same topology such that $\bar u_h \to \bar u$ in $L^1(\omega)$.
\end{theorem}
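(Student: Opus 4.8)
The plan is to read off \cref{T4.1} as the concrete instance of the abstract convergence result \cref{T2.3} corresponding to the choices $X=\omega$, $X_h=\omega_h$ and $\eta$ the Lebesgue measure: the three assertions of \cref{T4.1} are literally the three assertions of \cref{T2.3} in this setting, so all the proof has to do is to verify the abstract hypotheses \ref{D1}--\ref{D3} for the discretization \bdph of \bdp and then quote \cref{T2.3}. Most of this verification has already been carried out in the paragraphs preceding the statement; the proof would just assemble those observations.

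First I would record \ref{D1}: it is precisely the standing triangulation assumption $|\omega\setminus\omega_h|\le C_\omega h^{p_\omega}\to 0$. For \ref{D1p5} I would observe that $\uadh$ is a bounded, closed and convex subset of $L^\infty(\omega_h)$ contained in $\{\alpha\le u_h\le\beta\}$, and that for each $u\in\uad$ the piecewise-constant projection $u_h=\Pi_h u$ from \eqref{E4.9} lies in $\uadh$ and converges to $u$ in $L^1(\omega)$ (indeed in every $L^p(\omega)$, $p<\infty$). Assumption \ref{D2} is then essentially trivial, because on the fixed, finite-dimensional mesh space $\uh$ weak $L^2$ convergence coincides with strong convergence, and $J_h$ is a continuous function of the finitely many degrees of freedom of $u_h$ (the discrete state map $u_h\mapsto y_h(u_h)$ being continuous by monotonicity), hence in particular weakly lower semicontinuous in $L^2(\omega_h)$.

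The one assumption that genuinely needs an argument is \ref{D3}, and it follows from \cref{L4.2}. Given $u_h\in\uadh$ with $u_h\weaklystar u$ in $L^\infty(\omega)$ in the sense of \cref{S23}, the sequence is bounded by $\beta$ in $L^\infty$ and converges to $u$ weakly in $L^1(\omega)$; moreover $u\in\uad\subset\AA$ since $\alpha\ge 0>-\Lambda/2$, and the (zero-)extended $u_h$ likewise belong to $\AA\cap\uh$. Hence \cref{L4.2} applies and gives $y_h(u_h)\to y_u$ and $\varphi_h(u_h)\to\varphi_u$ in $L^\infty(\Omega)$, together with $J_h(u_h)\to J(u)$; in particular both \eqref{D3.1} (where only a $\liminf$ is needed) and \eqref{D3.2} hold. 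With \ref{D1}--\ref{D3} verified, \cref{T2.3} applies verbatim and yields the existence of a discrete global solution, the global optimality for \bdp of every weak* limit point of discrete global solutions, and the approximability of every bang-bang strict local minimum of \bdp by $L^1$-local minimizers of \bdph converging to it in $L^1(\omega)$ — which is exactly the assertion.

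The only delicate point — the step I would treat most carefully — is the matching of the convergence notions: one must check that $\bar u_h\weaklystar\tilde u$ in $L^\infty(\omega)$ and $\bar u_h\to\bar u$ in $L^1(\omega)$ as stated here agree with the abstract notions of \cref{S23} attached to the pair $(\omega,\omega_h)$, and that the extra Tikhonov contribution $\tfrac{\alpha_h}{2}\|u_h\|_{L^2(\omega_h)}^2$ does not interfere with the passage to the limit. Both are handled by the same two facts used throughout this section, namely $|\omega\setminus\omega_h|\to 0$ and $\alpha_h\to 0$, together with the uniform bound $\|u_h\|_{L^\infty(\omega_h)}\le\beta$ on feasible controls; no new estimate beyond those already established for \cref{L4.1,L4.2} is required.
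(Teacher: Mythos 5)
Your proposal matches the paper's own treatment: the paper proves \cref{T4.1} exactly by verifying \ref{D1}, \ref{D1p5}, \ref{D2} (via the projection \eqref{E4.9} and the finite-dimensionality of $\uh$) and \ref{D3} (via \cref{L4.2}, using $\alpha_h\to 0$ for the Tikhonov term), and then invoking \cref{T2.3} with $X=\omega$, $X_h=\omega_h$ and $\eta$ the Lebesgue measure. The argument is correct and takes essentially the same route, including the identification of \cref{L4.2} as the only substantive ingredient.
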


Now, we apply \cref{T2.4} to get the following result.
\begin{theorem}\label{T4.2}
Assume that \ref{asm:coercivity}--\ref{asm:5} hold. Additionally, we suppose that \eqref{E3.11} is fulfilled and $\bar u$ satisfies the second-order condition \eqref{E3.20} with $\kappa' \in (0, \kappa)$. Let $\{\bar u_h\}_h$ be a sequence of local solutions of problems \bdph converging to $\bar u$ in $L^1(\omega)$. Then, there exists a constant $C$ independent of $h$ such that
\begin{equation}
\|\bar u - \bar u_h\|_{L^1(\omega_h)} \le C \, (h + \alpha_h).
\label{E4.11}
\end{equation}
\end{theorem}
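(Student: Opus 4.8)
The plan is to apply the abstract estimate \eqref{E2.30} of \cref{T2.4}, whose hypotheses are met here: \ref{H1}--\ref{H5} and \ref{D1}--\ref{D4} were verified in this section, and \eqref{E3.20} with $\kappa' \in (0,\kappa)$ is exactly \eqref{eq305} for $X = \omega$. Choosing $\gamma = (\kappa-\kappa')/2 > 0$, it remains to show that both terms on the right-hand side of \eqref{E2.30} are bounded by $C\,(h+\alpha_h)^2$, with $C$ independent of $h$ and of the element $\bar u_h$. I will use repeatedly that every control in $\uad$ or $\uadh$ takes values in $[\alpha,\beta]$, and that for any $u \in \AA$ with $\|u\|_{L^\infty(\omega)} \le \beta$ the state $y_u$ and adjoint state $\varphi_u$ are bounded in $W^{2,\bar p}(\Omega) \embeds C^1(\bar\Omega)$ by a constant depending only on $\beta$ (the extra regularity invoked before \cref{L4.1}). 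In particular $\psi_{\hatbar u_h} := -(\varphi_{\hatbar u_h}\,y_{\hatbar u_h})|_\omega$, and likewise $\bar\psi = -(\bar\varphi\bar y)|_\omega$, are Lipschitz on $\bar\omega$ with a uniform constant.

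For the first term I would use the representations \eqref{E4.10} and \eqref{eq:first_deriv_obj}: on $\omega_h$ we have $J_h'(\bar u_h) = -\varphi_h(\bar u_h)\,y_h(\bar u_h) + \alpha_h\,\bar u_h$ and $J'(\hatbar u_h) = -\varphi_{\hatbar u_h}\,y_{\hatbar u_h}$. Since the discrete equations \eqref{E4.4}, \eqref{E4.5} see the control only through $\chi_{\omega_h}$ and $\hatbar u_h$ coincides with $\bar u_h$ on $\omega_h$, we have $y_h(\hatbar u_h) = y_h(\bar u_h)$ and $\varphi_h(\hatbar u_h) = \varphi_h(\bar u_h)$, so \cref{L4.1} applied to the control $\hatbar u_h$ (which has $L^\infty$-norm at most $\beta$) gives $\|y_{\hatbar u_h} - y_h(\bar u_h)\|_{L^\infty(\Omega)} + \|\varphi_{\hatbar u_h} - \varphi_h(\bar u_h)\|_{L^\infty(\Omega)} \le C\,h$. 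Together with the uniform $L^\infty$-bounds on the factors and with $\|\alpha_h\bar u_h\|_{L^\infty(\omega_h)} \le \alpha_h\,\beta$ this yields $\|J_h'(\bar u_h) - J'(\hatbar u_h)\|_{L^\infty(\omega_h)} \le C\,(h+\alpha_h)$, hence the first term of \eqref{E2.30} is $\le C\,(h+\alpha_h)^2$.

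For the second term I would bound the infimum in \eqref{E2.30} by its value at $u_h = \Pi_h\bar u \in \uadh$, see \eqref{E4.9}. The crucial estimate is $\|\Pi_h\bar u - \bar u\|_{L^1(\omega_h)} \le C\,h$: on every cell $T \in \mathcal{T}_{\omega,h}$ on which $\bar u$ is a.e.\ constant, $\Pi_h\bar u = \bar u$; on a cell on which it is not, $\bar u$ attains both values $\alpha$ and $\beta$ on sets of positive measure, so using $\bar u = \alpha$ where $\bar\psi > 0$, $\bar u = \beta$ where $\bar\psi < 0$, and $|\{\bar\psi = 0\}| = 0$ (from \eqref{E3.11}), the continuous function $\bar\psi$ changes sign on $T$; since $\bar\psi$ is Lipschitz and $\operatorname{diam}T \le C\,h$, such a cell lies in $\{x \in \omega : |\bar\psi(x)| \le C\,h\}$, whose measure is $\le C\,h$ by \eqref{E3.11}. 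It remains to control $J'(\hatbar u_h)(\hat u_h - \bar u) = \int_{\omega_h}\psi_{\hatbar u_h}\,(\Pi_h\bar u - \bar u)\dx$ (note that $\hat u_h - \bar u$ vanishes on $\omega \setminus \omega_h$ and equals $\Pi_h\bar u - \bar u$ on $\omega_h$). As $\Pi_h\bar u - \bar u$ is $L^2$-orthogonal to piecewise constants on $\mathcal{T}_{\omega,h}$, I may subtract $\Pi_h\psi_{\hatbar u_h}$ to write $J'(\hatbar u_h)(\hat u_h - \bar u) = \int_{\omega_h}(\psi_{\hatbar u_h} - \Pi_h\psi_{\hatbar u_h})(\Pi_h\bar u - \bar u)\dx$; the uniform Lipschitz bound on $\psi_{\hatbar u_h}$ gives $\|\psi_{\hatbar u_h} - \Pi_h\psi_{\hatbar u_h}\|_{L^\infty(\omega_h)} \le C\,h$, whence $\bigabs{J'(\hatbar u_h)(\hat u_h - \bar u)} \le C\,h^2$. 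Thus the infimum in \eqref{E2.30} is $\le C\,h^2$. Inserting both bounds into \eqref{E2.30} gives $\|\bar u_h - \bar u\|^2_{L^1(\omega_h)} \le C\,(h+\alpha_h)^2$, and taking square roots proves \eqref{E4.11}.

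I expect the main obstacle to be the mixed term $J'(\hatbar u_h)(\hat u_h - \bar u)$: estimating it naively via $\|\psi_{\hatbar u_h} - \bar\psi\|_{L^\infty(\omega_h)}$ only yields a Hölder bound of the form $C\,\|\bar u_h - \bar u\|_{L^1(\omega_h)}^{\theta}\,h$ with $\theta < 1$, because the available $L^\infty$-estimates for the state and adjoint differences (cf.\ \eqref{E3.14} and \cref{R3.1}) are only in terms of $\|\bar u_h - \bar u\|_{L^p(\omega_h)}$ with $p > 3/2$; on insertion into \eqref{E2.30} this would degrade the rate below $O(h)$. The remedy, and the key idea of the proof, is to exploit the $L^2$-orthogonality of $\Pi_h\bar u - \bar u$ to piecewise constants together with the uniform $C^1$-regularity of the product $\varphi_{\hatbar u_h}\,y_{\hatbar u_h}$, which restores the clean order $h^2$. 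The other nontrivial ingredient is the bound $\|\Pi_h\bar u - \bar u\|_{L^1(\omega_h)} \le C\,h$, which couples the measure-regularity assumption \eqref{E3.11} with the $C^1$-regularity of $\bar\psi = -\bar\varphi\bar y$.
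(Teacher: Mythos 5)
Your proposal is correct and follows essentially the same route as the paper's proof: the first term of \eqref{E2.30} is bounded by $C(h+\alpha_h)$ via \cref{L4.1} applied to $\hatbar u_h$, the infimum is bounded by choosing $u_h = \Pi_h\bar u$ and combining the sign-change/Lipschitz argument for $\bar\varphi\bar y$ with \eqref{E3.11} to get $\|\Pi_h\bar u - \bar u\|_{L^1(\omega_h)} \le Ch$, and the mixed term is handled by the $L^2$-orthogonality of $\Pi_h\bar u - \bar u$ to piecewise constants together with the uniform $C^1$-bound on $\varphi_{\hatbar u_h}y_{\hatbar u_h}$, yielding $O(h^2)$. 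All three estimates, including the key orthogonality trick you flag as the main obstacle, coincide with those in the paper.
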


\begin{proof}
To prove this theorem we will estimate the three terms in the right hand side of \eqref{E2.30}. First, we observe that
\begin{equation}
	\label{eq:estimate_Jh_J}
	\begin{aligned}
		\|J'_h(\bar u_h) - J'(\hatbar u_h)\|_{L^\infty(X_h)} &= \|\bar\varphi_h\bar y_h - \varphi_{\hatbar u_h}y_{\hatbar u_h} + \alpha_h \, u_h\|_{L^\infty(\omega_h)}
		\\
		&\le
		\|\bar\varphi_h\bar y_h - \varphi_{\hatbar u_h}y_{\hatbar u_h}\|_{L^\infty(\omega_h)} + C_0 \, \alpha_h
		,
	\end{aligned}
\end{equation}
where  $\bar y_h$ and $\bar\varphi_h$ are the discrete state and adjoint state associated with $\bar u_h$, and $y_{\hatbar u_h}$ and $\varphi_{\hatbar u_h}$ are the continuous state and adjoint state corresponding to $\hatbar u_h$, which is the extension of $\bar u_h$ to $\omega$ by $\bar u$. Now using \cref{L4.1} we obtain
\begin{align}
\|\bar\varphi_h\bar y_h - \varphi_{\hatbar u_h}y_{\hatbar u_h}\|_{L^\infty(\omega_h)} &\le \|\bar\varphi_h\|_{L^\infty(\omega_h)}\|\bar y_h - y_{\hatbar u_h}\|_{L^\infty(\omega_h)}\notag\\
&+ \|y_{\hatbar u_h}\|_{L^\infty(\omega_h)}\|\bar\varphi_h -  \varphi_{\hatbar u_h}\|_{L^\infty(\omega_h)} \le C_1h.
\label{E4.12}
\end{align}

Now, we estimate the second term of  \eqref{E2.30}. To this end, we take $u_h$ as the projection of $\bar u$ on $\uh$; see \eqref{E4.9}.
Since $\bar u$ is bang-bang by assumption, it holds $\bar u=u_h$ on all elements,
where $\bar u$ is constant.
It remains to estimate $|u_h-\bar u|$ on elements $T$, where $\bar u$ takes the values $\alpha$ and $\beta$ on some points of $T$.
Let us denote the family of such elements by $\mathcal{T}_{h,\bar u}$.
Let us take $T\in \mathcal{T}_{h,\bar u}$.
This means that $\bar\varphi\bar y$ changes the sign in $T$.
Since $\bar\varphi\bar y$ is continuous in $\bar\Omega$, there exists a point $\xi_T \in T$ such that $\bar\varphi(\xi_T)\bar y(\xi_T) = 0$. Since $\bar\varphi\bar y \in W^{2,\bar p}(\Omega) \subset C^1(\bar\Omega)$, we get the existence of constant $\bar L$ such that
\[
|\bar\varphi(x)\bar y(x)| = |\bar\varphi(x)\bar y(x) - \bar\varphi(\xi_T)\bar y(\xi_T)| \le \bar L |x - \xi_T| \le \bar L h\ \ \forall x\in T.
\]
This inequality implies that
\[
\bigcup_{T \in \mathcal{T}_{h,\bar u}}T \subset \{x \in \omega_h : |\bar\varphi(x)\bar y(x)| \le \bar L h\}.
\]
This along with \eqref{E3.11} leads to
\[
\sum_{T \in \mathcal{T}_{h,\bar u}}|T| \le K\bar L h.
\]
Hence, we infer
\begin{equation}
\|u_h - \bar u\|_{L^1(\omega_h)} = \sum_{T \in \mathcal{T}_{h,\bar u}}\|u_h - \bar u\|_{L^1(T)} \le (\beta - \alpha)K\bar L h = C_2h.
\label{E4.13}
\end{equation}
We finish the proof with the estimate of the third term of \eqref{E2.30}.
Note that by construction it holds $\hat u_h=\bar u$ on $\omega\setminus \omega_h$.
Using that $u_h$ is the projection of $\bar u$ we get with \eqref{eq:first_deriv_obj} and \eqref{E4.13}
\begin{align}
&|J'(\hatbar u_h)(\hat u_h - \bar u)| = \Big|\int_{\omega}\varphi_{\hatbar u_h}y_{\hatbar u_h}(\hat u_h - \bar u)\, \dx\Big| = \Big|\int_{\omega_h}\varphi_{\hatbar u_h}y_{\hatbar u_h}(u_h - \bar u)\, \dx\Big|\notag\\
&= \Big|\int_{\omega_h}(\varphi_{\hatbar u_h}y_{\hatbar u_h} - \Pi_h(\varphi_{\hatbar u_h}y_{\hatbar u_h}))(u_h - \bar u)\, \dx\Big|\notag\\
& \le \|\varphi_{\hatbar u_h}y_{\hatbar u_h} - \Pi_h(\varphi_{\hatbar u_h}y_{\hatbar u_h})\|_{L^\infty(\omega_h)}\|u_h - \bar u\|_{L^1(\omega_h)}\notag\\
& \le Ch\|\varphi_{\hatbar u_h}y_{\hatbar u_h}\|_{C^1(\bar\Omega)}C_2h \le C_3h^2.
\label{E4.14}
\end{align}
Here, we used that $\{\varphi_{\hatbar u_h}\}$ and $\{y_{\hatbar u_h}\}$ are uniformly bounded in $W^{2,p}(\Omega)$.
Finally, \eqref{E4.11} follows from \eqref{E2.30}, \eqref{eq:estimate_Jh_J}--\eqref{E4.14} and Young's inequality.
\end{proof}

\bibliographystyle{siam}
\bibliography{CWW}
\end{document}